\newtheorem{lemma}{Lemma}[section]
\newtheorem{theorem}[lemma]{Theorem}
\newtheorem*{theorem*}{Theorem}
\newtheorem{corollary}[lemma]{Corollary}
\newtheorem{proposition}[lemma]{Proposition}
\newtheorem*{proposition*}{Proposition}
\theoremstyle{remark}
\theoremstyle{definition}
\newtheorem*{definition*}{Definition}
\newtheorem*{conjecture*}{Conjecture}
\newtheorem*{remark*}{Remark}
\newtheorem*{remarks*}{Remarks}
\newtheorem*{claim*}{Claim}
\newcommand{\C}{{\mathbb C}}
\newcommand{\E}{{\mathbb E}}
\newcommand{\N}{{\mathbb N}}
\renewcommand{\P}{{\mathbb P}}
\newcommand{\Q}{{\mathbb Q}}
\newcommand{\R}{{\mathbb R}}
\newcommand{\T}{{\mathbb T}}
\newcommand{\Z}{{\mathbb Z}}
\newcommand{\norm}[1]{\left\Vert #1\right\Vert}
\begin{document}

\title[Closest integer polynomial multiple recurrence along shifted primes]{Closest integer polynomial multiple recurrence along shifted primes}

\author{Andreas Koutsogiannis}
\address[Andreas Koutsogiannis]{The Ohio State University, Department of mathematics, Columbus, Ohio, USA} \email{koutsogiannis.1@osu.edu}

\begin{abstract}
Following an approach presented by N.~Frantzikinakis, B.~Host and B.~Kra, we show that the parameters in the multidimensional Szemer{\'e}di theorem for closest integer polynomials have non-empty intersection with the set of shifted primes $\P-1$ (or similarly of $\P+1$). Using the Furstenberg Correspondence Principle, we show this result by recasting it as a polynomial multiple recurrence result in measure ergodic theory. Furthermore, we obtain integer part polynomial convergence results by the same method, which is a transference principle that enables one to deduce results for $\Z$-actions from results for flows. We also give some applications of our approach on Gowers uniform sets.
\end{abstract}

\subjclass[2010]{Primary: 11B30; Secondary: 37A45, 37A05, 05D10. }

\keywords{Arithmetic progressions, multiple recurrence, prime numbers.}

\maketitle

\section{Introduction and main results}

 For a subset $E\subset \Z^\ell$ we denote its {\em upper Banach density} by $d^\ast(E),$ which is defined to be the number $$d^\ast(E)=\limsup_{|I|\to\infty}\frac{|E\cap I|}{|I|},$$ where the $\limsup$ is taken over all the parallelepipeds $I\subset \Z^\ell$ whose side lengths tend to infinity.

\medskip

 Bergelson and Leibman, in \cite{BL2}, proved the following polynomial multidimensional Szemer{\'e}di theorem:
 
\begin{theorem*}\label{T:b}
Let $\ell,$ $m\in \N,$ $\vec{q}_1,\ldots,\vec{q}_m:\Z\to\Z^\ell$ be polynomials with $\vec{q}_i(0)=\vec{0}$ for $1\leq i\leq m$ and let $E\subseteq \Z^\ell$ with $d^\ast(E)>0.$ Then there exists $n\in\N$ such that  
$$
d^\ast \left( E\cap(E+\vec{q}_1(n))\cap\ldots\cap (E+\vec{q}_m(n))\right)>0.$$
\end{theorem*}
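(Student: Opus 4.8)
The plan is to recast the statement, via the Furstenberg Correspondence Principle, as a polynomial multiple recurrence theorem for a measure preserving $\Z^\ell$-action, and then to prove that recurrence theorem by Bergelson's \emph{PET induction} together with the structure theory of the relevant multiple ergodic averages. First I would apply the correspondence: given $E\subseteq\Z^\ell$ with $d^\ast(E)>0$, one obtains a probability space $(X,\CX,\mu)$, $\ell$ commuting invertible measure preserving transformations $T_1,\dots,T_\ell$ (equivalently a measure preserving $\Z^\ell$-action $\bv\mapsto T^{\bv}$), and a set $A\in\CX$ with $\mu(A)=d^\ast(E)$, so that for every $n\in\N$
\[
d^\ast\big(E\cap(E+\vec q_1(n))\cap\cdots\cap(E+\vec q_m(n))\big)\ \geq\ \mu\big(A\cap T^{\vec q_1(n)}A\cap\cdots\cap T^{\vec q_m(n)}A\big),
\]
where $T^{\vec q_i(n)}:=T_1^{q_{i,1}(n)}\cdots T_\ell^{q_{i,\ell}(n)}$. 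It then suffices to produce a single $n$ for which the right-hand side is positive, and I would in fact aim for the stronger quantitative statement
\[
\liminf_{N\to\infty}\frac1N\sum_{n=1}^{N}\mu\big(A\cap T^{\vec q_1(n)}A\cap\cdots\cap T^{\vec q_m(n)}A\big)\ >\ 0.
\]

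Next I would analyse the corresponding multiple ergodic averages in $L^2(\mu)$. The left-hand side above equals $\frac1N\sum_{n=1}^N\int_X \one_A\cdot T^{\vec q_1(n)}\one_A\cdots T^{\vec q_m(n)}\one_A\,d\mu$, so it is enough to understand the limiting behaviour of $\frac1N\sum_{n=1}^N T^{\vec q_1(n)}f_1\cdots T^{\vec q_m(n)}f_m$ for bounded $f_i$. Here I would run PET induction on the family $\{\vec q_1,\dots,\vec q_m\}$: applying the van der Corput inequality to this vector-valued average bounds its limiting $L^2$-norm by averages over a new family of ``derivative'' polynomials of the shape $\vec q_i(n+h)-\vec q_j(n)$, which lies strictly lower in Bergelson's PET well-ordering; iterating, one descends through families of ever smaller complexity until reaching degree-one polynomials and finally constants. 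The upshot is that a proper factor $\CQ$ of $X$ is \emph{characteristic} for these averages --- replacing each $f_i$ by its conditional expectation on $\CQ$ leaves the limit unchanged --- and, by the structure theory of multiple ergodic averages (Host--Kra, Leibman, and its extension to commuting transformations), $\CQ$ can be taken to be an inverse limit of nilsystems.

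Finally, on $\CQ$ the orbit $n\mapsto(T^{\vec q_1(n)}x,\dots,T^{\vec q_m(n)}x)$ is a polynomial sequence on a nilmanifold, so Leibman's equidistribution theorem identifies its limiting distribution. The hypothesis $\vec q_i(0)=\vec0$ places this sequence on the diagonal at time $n=0$; equidistribution on the smallest sub-nilmanifold carrying the orbit then shows that the set of $n$ with $T^{\vec q_1(n)}x,\dots,T^{\vec q_m(n)}x$ all close to $x$ has positive lower density, and integrating against $\one_A$ yields the positivity of the $\liminf$ above, hence the theorem. (Alternatively, as in Bergelson--Leibman's original proof, one passes to recurrence along IP-sets and reduces to the Furstenberg--Katznelson multidimensional IP Szemer\'edi theorem by a polynomial Hales--Jewett coloring argument; this avoids nilsystems but requires the full IP-ergodic machinery.)

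The step I expect to be the main obstacle is exactly this transition from structure to positivity. The PET descent itself is laborious --- one must track an ever-growing system of polynomials and check that each van der Corput step genuinely lowers the complexity --- but it is essentially mechanical; the substantive content is the structure theorem that pins the characteristic factor down to an inverse limit of nilsystems, together with the equidistribution input that converts this description into an actual lower bound. The normalization $\vec q_i(0)=\vec0$ is indispensable at this last stage: without it the polynomial orbit need not return near the diagonal and the conclusion fails outright.
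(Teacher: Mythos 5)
This statement is not proved in the paper at all: it is the Bergelson--Leibman polynomial multidimensional Szemer\'edi theorem, quoted verbatim from \cite{BL2} and used as a black box (it enters only through the uniform recurrence statements borrowed from \cite{BHMP}). So there is no in-paper argument to compare with, and your proposal has to be judged against the actual proof in the literature. Your opening reduction via the Furstenberg Correspondence Principle, and the idea of running PET induction on the family $\{\vec q_1,\dots,\vec q_m\}$ via van der Corput, are indeed how every known proof begins.

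The gap is in the ``structure to positivity'' step, and it is fatal as written. You invoke a structure theorem asserting that the characteristic factor for polynomial multiple averages of \emph{commuting} transformations is an inverse limit of nilsystems, and then Leibman's equidistribution theorem on the resulting nilmanifold. No such structure theorem exists in the generality you need: the Host--Kra/Ziegler theory, and Leibman's convergence and equidistribution results, concern a single transformation (equivalently, the case where all $T_i$ are powers of one $T$). For a genuine $\Z^\ell$-action with $\ell\geq 2$ the characteristic factors are not known to be (and are not expected to be) nilsystems; this is precisely why convergence for commuting polynomial averages had to wait for Walsh \cite{W12}, whose proof yields no structural description at all and in particular no recurrence. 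Since the theorem you are proving is intrinsically multidimensional, your main route collapses at this point. The actual proof of Bergelson--Leibman instead runs the PET induction inside the Furstenberg--Katznelson machinery: one climbs a transfinite tower of primitive (compact and weak-mixing) extensions, showing that polynomial multiple recurrence lifts through each type of extension, with a polynomial van der Waerden coloring argument handling the compact steps --- no nilmanifolds anywhere. Your parenthetical alternative (IP-recurrence plus polynomial Hales--Jewett) is closer in spirit to a correct path, but that is the later Bergelson--McCutcheon IP theorem, not a fallback you can wave at; if you take that route it becomes the entire proof. Finally, even in the single-transformation case your last step is too quick: after projecting to the characteristic factor you are integrating $\prod_i T^{\vec q_i(n)}\tilde f$ with $\tilde f=\E(\one_A\mid\CQ)$ rather than an indicator, and extracting a lower bound depending only on $\mu(A)$ from equidistribution of the polynomial orbit requires identifying the orbit closure of the diagonal point for almost every $x$ and a separate positivity argument; it is a theorem, not a corollary of Weyl-type equidistribution.
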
 
 
 Let $\P$ denote the set of prime numbers. Frantzikinakis, Host and Kra showed,  in \cite{FHK}, that the parameters of this result can be restricted to the shifted primes $\P-1$ (and similarly to $\P+1$), generalizing results due to S{\'a}rk{\"o}zy (\cite{Sark}), who showed that any $E-E,$ with $E\subset \Z,$ $d^\ast(E)>0,$ contains a shifted prime $p-1$ (and similarly a shifted prime $p+1$) for some $p\in\P;$ due to Wooley and Ziegler (\cite{WZ}) who proved the $\ell=1$ case and finally, of Bergelson, Leibman and Ziegler (\cite{BLZ}) who got the result for linear polynomials.
 
\medskip 

If $[[\cdot]]$ denotes the closest integer, i.e, $[[x]]=[x+1/2],$ where $[\cdot]$ is the integer part function, for $\vec{q}=(q_1,\ldots,q_m)$ we set $[[\vec{q}]]=([[q_1]],\ldots,[[q_m]]).$

\medskip
 
Following the approach presented in \cite{FHK} and methods from \cite{K}, we will prove the analogous result of Frantzikinakis, Host and Kra for the respective closest integer polynomials. Namely, we will prove the following:

\begin{theorem}\label{T:p}
Let $\ell,$ $m\in \N,$ $\vec{q}_1,\ldots,\vec{q}_m:\Z\to\R^\ell$ be polynomials with $\vec{q}_i(0)=\vec{0}$ for $1\leq i\leq m$ and let $E\subseteq \Z^\ell$ with $d^\ast(E)>0.$ Then the set of integers $n$ such that  
$$
d^\ast \left( E\cap(E+[[\vec{q}_1(n)]])\cap\ldots\cap (E+[[\vec{q}_m(n)]])\right)>0$$ has non-empty intersection with $\P-1$ and $\P+1.$
\end{theorem}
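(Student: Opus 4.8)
The plan is to reduce, via the Furstenberg Correspondence Principle, to an ergodic multiple recurrence statement, then to transfer it --- following \cite{K} --- from $\Z^\ell$-actions with closest integer polynomial iterates to $\R^\ell$-flows with honest polynomial iterates, and finally to prove the flow statement by adapting the argument of \cite{FHK}. For Step~1, the Correspondence Principle supplies a probability space $(X,\mathcal B,\mu)$, a measure-preserving $\Z^\ell$-action $T=(T_1,\dots,T_\ell)$, and a set $A$ with $\mu(A)=d^\ast(E)$ such that $d^\ast\bigl(E\cap\bigcap_{i=1}^m(E+\vec v_i)\bigr)\ge\mu\bigl(A\cap\bigcap_{i=1}^m T^{-\vec v_i}A\bigr)$ for all $\vec v_1,\dots,\vec v_m\in\Z^\ell$, where $T^{\vec v}=T_1^{v_1}\cdots T_\ell^{v_\ell}$. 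Hence it suffices to prove
$$\liminf_{N\to\infty}\ \frac{1}{\pi(N)}\sum_{p\le N}\mu\Bigl(A\cap\bigcap_{i=1}^m T^{-[[\vec q_i(p-1)]]}A\Bigr)>0,$$
which produces infinitely many $n=p-1\in\P-1$ with the intersection of positive measure; the statement for $\P+1$ follows by replacing $p-1$ with $p+1$ throughout.

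\textbf{Step 2 (Transference to a flow).} Following \cite{K}, realise $(X,T)$ as the base of its $\R^\ell$-suspension $(\wt X,\wt\mu,(S^{\vec t})_{\vec t\in\R^\ell})$ over the constant roof $\vec 1$, so that $\wt X=X\times[0,1)^\ell$, $\wt\mu=\mu\times\mathrm{Leb}$ and $S^{\vec t}(x,\vec s)=(T^{\lfloor\vec s+\vec t\rfloor}x,\{\vec s+\vec t\})$. Since $[[y]]=\lfloor y+1/2\rfloor$, for $\vec s_0=(1/2,\dots,1/2)$ the $X$-coordinate of $S^{\vec q_i(n)}(x,\vec s_0)$ is exactly $T^{[[\vec q_i(n)]]}x$, and for the cylinder $\wt A=A\times[0,1)^\ell$ (so $\wt\mu(\wt A)=\mu(A)$) integration over the suspension coordinate gives
$$\wt\mu\Bigl(\wt A\cap\bigcap_{i=1}^m S^{-\vec q_i(n)}\wt A\Bigr)=\int_{[0,1)^\ell}\mu\Bigl(A\cap\bigcap_{i=1}^m T^{-\lfloor\vec s+\vec q_i(n)\rfloor}A\Bigr)\,d\vec s,$$
with integrand at $\vec s=\vec s_0$ equal to the quantity from Step~1. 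The transference principle of \cite{K} --- which rests on this identity, on enough regularity of the limiting flow averages to recover their value on the null slice $\{\vec s=\vec s_0\}$, and on equidistribution of the polynomial orbits $(\vec q_i(n))_i\bmod\Z^{m\ell}$ along the primes, and which disposes directly of those coordinates of the $\vec q_i$ that are integer-valued (where $[[\vec q_i(n)]]=\vec q_i(n)$ and Theorem~\ref{T:p} is literally the result of \cite{FHK}) --- reduces Theorem~\ref{T:p} to the following flow statement: \emph{for every $\R^\ell$-flow $(Y,\nu,(R^{\vec t})_{\vec t\in\R^\ell})$, every $B$ with $\nu(B)>0$, and all polynomials $\vec p_1,\dots,\vec p_m:\Z\to\R^\ell$ with $\vec p_i(0)=\vec 0$,}
$$\liminf_{N\to\infty}\ \frac{1}{\pi(N)}\sum_{p\le N}\nu\Bigl(B\cap\bigcap_{i=1}^m R^{-\vec p_i(p-1)}B\Bigr)>0,$$
\emph{and similarly along $\P+1$.}

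\textbf{Step 3 (Flow recurrence along the primes, following \cite{FHK}).} The argument of \cite{FHK} now carries through for flows. After the $W$-trick the prime average is replaced by a renormalised, $W$-tricked von Mangoldt average $N^{-1}\sum_{n\le N}\Lambda'(n+1)$, and it is enough to compare, in $L^2(\nu)$, the weighted multiple average $N^{-1}\sum_{n}\Lambda'(n+1)\,R^{\vec p_1(n)}f_1\cdots R^{\vec p_m(n)}f_m$ with its unweighted counterpart. The Host--Kra--Leibman structure theory carries over from $\Z^\ell$-actions to $\R^\ell$-flows, so a characteristic factor for these polynomial multiple averages is an inverse limit of nilflows; on such a factor each orbit $n\mapsto R^{\vec p_i(n)}$ is a polynomial sequence on a nilmanifold, so the multiple correlation is a finite sum of nilsequences, and the Green--Tao--Ziegler theorem applies directly to these (being evaluations of nilflow orbits at integer times, they are ordinary nilsequences), giving that $\Lambda'(n+1)-1$ is asymptotically orthogonal to them. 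Thus the weighted and unweighted averages share the same $L^2$-limit, up to the factor $W/\phi(W)$. Finally, positivity of the limit of the unweighted polynomial multiple averages --- a uniform form of the theorem of Bergelson and Leibman from \cite{BL2}, as established in \cite{FHK} --- shows this common limit, tested against $\one_B$, is bounded below by a positive constant depending only on $\nu(B)$, $\ell$, $m$ and the degrees of the $\vec p_i$. This proves the flow statement, and hence Theorem~\ref{T:p}.

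\textbf{Main obstacle.} The crux is Step~2: reconciling the closest integer iterates $[[\vec q_i(n)]]$ with the continuous iterates $\vec q_i(n)$ that the suspension flow provides, i.e.\ controlling the bounded, $n$-dependent integer errors introduced by the suspension coordinate and recovering, from the $\vec s$-averaged flow correlations, the information carried by the null slice $\{\vec s=\vec s_0\}$. This is precisely what the transference machinery of \cite{K} achieves, by realising those errors as generalised polynomials, absorbing them into an auxiliary torus factor, and invoking equidistribution of polynomial orbits along the primes. A secondary difficulty is to keep all estimates in Step~3 uniform enough to yield a genuine positive $\liminf$, rather than mere non-vanishing along a subsequence --- the former being what is needed to locate a prime.
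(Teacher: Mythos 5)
Your Step 1 matches the paper, but there are genuine gaps in Steps 2 and 3. In Step 2 you reduce the whole theorem to a recurrence statement for $\R^\ell$-flows and defer the ``slice recovery'' --- passing from the $\vec s$-averaged suspension correlations back to the single fibre $\vec s=\vec s_0$ --- to the transference machinery of \cite{K}. That machinery is built for norm convergence of averages, not for positivity: positivity of $\int_{[0,1)^{\ell}}\mu\bigl(A\cap\bigcap_{i} T^{-\lfloor\vec s+\vec q_i(p-1)\rfloor}A\bigr)\,d\vec s$ for some prime $p$ only yields a positive-measure set of $\vec s$, not the particular slice you need. That this is not a removable technicality is shown by the paper's own remark after Theorem~\ref{T:t3}: there are systems with $\mu(T^{[\alpha n]}A\cap T^{[-\alpha n]}A)=0$ for every $n$, so recurrence genuinely fails to transfer from the flow to integer-part iterates; it survives for the closest integer only because of the identity $[[x_1+\cdots+x_k]]=[[x_1]]+\cdots+[[x_k]]$ when $\norm{x_1}+\cdots+\norm{x_k}<1/2$ (Lemma~\ref{L:102}). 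Accordingly the paper does \emph{not} prove flow recurrence and transfer it back; it splits the problem as (weighted average along primes) close to (unweighted average), plus (unweighted average is uniformly positive). The suspension flow is used only for the first, comparison half (Theorem~\ref{T:t6} and Proposition~\ref{P:1}), with the error controlled by restricting the fibre coordinate to a small box $[0,\delta]^{\ell m}$ and bounding the density of those $n$ with $\{q_{i,j}(n)\}\in[1-\delta,1)$ via Weyl equidistribution and orthogonality of $\Lambda'_{w,r}-1$ to nilsequences. The positivity half (Corollary~\ref{C:n2}) is proved by an entirely different mechanism: Lemmas~\ref{L:101} and~\ref{L:102} produce a set of $s$ of positive lower density on which $[[q_{i,j}(ns)]]$ coincides, for $1\le n\le N_0$, with a genuine integer polynomial in $n$, after which the uniform polynomial Szemer\'edi theorem (Theorem~\ref{C:n11}, Proposition~\ref{P:1011}) applies. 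Your proposal has no substitute for this step.

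Separately, in Step 3 you invoke a Host--Kra--Leibman characteristic-factor theory (``inverse limit of nilflows'') for the multiple averages. For $\ell\ge 2$ commuting transformations no such structure theorem is available; this is precisely why \cite{FHK}, and this paper, replace the characteristic-factor route by the soft Gowers-norm estimate of Lemma~\ref{L:n1}, obtained by PET induction and van der Corput. So even the comparison half of your argument must be rerouted through Lemma~\ref{L:n1} (applied to the suspension flow as in Theorem~\ref{T:t6}) rather than through nilfactor theory.
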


In order to obtain this result, we will make use of the Furstenberg Correspondence Principle (see below). Via this principle, we will get Theorem~\ref{T:p}, by proving a multiple recurrence result in ergodic theory (see Theorem~\ref{T:e} below).

\begin{definition*}
For $\ell \in \N,$ we call the setting $(X,\mathcal{X},\mu,T_1,\ldots,T_\ell)$  a \emph{system}, where $T_1,\ldots, T_\ell\colon$ $ X\to X$ are invertible commuting measure preserving transformations on the probability space $(X,\mathcal{X},\mu).$ 
\end{definition*}

\begin{theorem*}[Furstenberg Correspondence Principle, \cite{Fu}]\label{T:f}
Let $\ell\in \N$ and $E\subseteq \Z^\ell.$ There exist a system $(X,\mathcal{X},\mu, T_1,\ldots,T_\ell)$ and a set $A\in \mathcal{X}$ with $\mu(A)=d^\ast(E),$ such that 
$$
d^\ast \left( E\cap(E+\vec{n}_1)\cap\ldots\cap (E+\vec{n}_m)\right)\geq \mu\left(A\cap(\prod_{i=1}^\ell T_i^{-n_{i,1}})A\cap\ldots\cap (\prod_{i=1}^\ell T_i^{-n_{i,m}})A\right)$$ for all $m\in \N$ and $\vec{n}_j=(n_{1,j},\ldots,n_{\ell,j})\in\Z^\ell$ for $1\leq j\leq m.$
\end{theorem*}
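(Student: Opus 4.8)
The plan is to realize the set $E$ as a single point of a shift dynamical system on a compact space and to translate the combinatorial intersections into orbit visits to a fixed cylinder set. Concretely, I would take $X=\{0,1\}^{\Z^\ell}$ with the product topology, which is compact and metrizable, and let $\Z^\ell$ act by the commuting shifts $T_1,\ldots,T_\ell$ defined by $(T_i\omega)(\vec x)=\omega(\vec x-\vec e_i)$, where $\vec e_i$ is the $i$-th coordinate vector; each $T_i$ is a homeomorphism and they commute. Writing $a=\one_E\in X$ and $a_{\vec x}=(T_1^{-x_1}\cdots T_\ell^{-x_\ell})a$ for $\vec x=(x_1,\ldots,x_\ell)\in\Z^\ell$, and taking the clopen cylinder $A=\{\omega\in X:\omega(\vec 0)=1\}$, the single identity to check is the dictionary that $a_{\vec x}\in A\cap\bigcap_{j=1}^{m}(\prod_{i=1}^{\ell}T_i^{-n_{i,j}})A$ holds if and only if $\vec x\in E\cap\bigcap_{j=1}^m(E+\vec n_j)$; this is immediate from unwinding the definitions, and the shift convention is chosen precisely so that the sign matches the statement.

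Next I need to produce the invariant measure. First, using the definition of $d^\ast(E)$, I would pick a sequence of boxes $I_N\subset\Z^\ell$ with side lengths tending to infinity along which $|E\cap I_N|/|I_N|\to d^\ast(E)$, and form the empirical probability measures $\mu_N=\frac{1}{|I_N|}\sum_{\vec x\in I_N}\delta_{a_{\vec x}}$ on $X$. Since the space of Borel probability measures on the compact metric space $X$ is weak-$\ast$ compact, I would pass to a subsequence along which $\mu_N\to\mu$ in the weak-$\ast$ topology. The main step is to verify that $\mu$ is invariant under each $T_i$: for any continuous $f$ one has $|\int f\circ T_i\,d\mu_N-\int f\,d\mu_N|\le \|f\|_\infty\,|I_N\,\triangle\,(I_N+\vec e_i)|/|I_N|$, and the right-hand side tends to $0$ because the boxes $I_N$ form a F\o lner sequence, their boundaries being negligible relative to their volume. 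Passing to the limit yields $\int f\circ T_i\,d\mu=\int f\,d\mu$ for every continuous $f$, hence $T_i$ preserves $\mu$; together with commutativity this makes $(X,\mathcal X,\mu,T_1,\ldots,T_\ell)$ a system in the required sense. I expect this F\o lner/boundary estimate, securing invariance of the weak-$\ast$ limit, to be the only genuinely technical point.

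Finally I would read off the two conclusions from the dictionary, using that every set involved is clopen and hence has continuous indicator, so that its $\mu_N$-measure converges to its $\mu$-measure along the chosen subsequence. For $A$ this gives $\mu(A)=\lim_N\mu_N(A)=\lim_N|E\cap I_N|/|I_N|=d^\ast(E)$, by the choice of the boxes $I_N$. For a fixed tuple $\vec n_1,\ldots,\vec n_m$, the dictionary yields $\mu_N\big(A\cap\bigcap_j(\prod_iT_i^{-n_{i,j}})A\big)=|E\cap\bigcap_j(E+\vec n_j)\cap I_N|/|I_N|$, and passing to the limit shows that $\mu\big(A\cap\bigcap_j(\prod_iT_i^{-n_{i,j}})A\big)$ is a subsequential limit of densities of the intersection set along the $I_N$. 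Since these particular boxes were chosen to realize the density of $E$ and need not be optimal for the intersection set, this limit is at most its upper Banach density; this is precisely why the statement asserts the inequality $\ge$ rather than an equality, and it delivers the claimed bound for every $m$ and every choice of $\vec n_j$ simultaneously, since a single weak-$\ast$ limit $\mu$ handles all countably many tuples at once.
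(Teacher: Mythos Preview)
The paper does not supply its own proof of this statement; it is quoted from Furstenberg's original work \cite{Fu} and used as a black box to reduce Theorem~\ref{T:p} to Theorem~\ref{T:e}. Your proposal is the standard symbolic-dynamics construction and is correct: the dictionary between cylinder visits of the orbit of $\one_E$ and the combinatorial intersections, the F\o lner boundary estimate securing invariance of the weak-$\ast$ limit, and the use of clopenness to pass measures of cylinder intersections to the limit are all handled properly, and you correctly pinpoint why the conclusion is an inequality rather than an equality (the boxes $I_N$ were tuned to $E$, not to the intersection set).
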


By the Furstenberg Correspondence Principle, in order to prove Theorem~\ref{T:p}, it suffices to prove the following ergodic reformulation of it:

\begin{theorem}\label{T:e}
Let   $\ell,m\in \N$ and  $q_{i,j}\in \R[t]$ be polynomials with $q_{i,j}(0)=0$ for $1\leq i\leq \ell,$ $1\leq j\leq m.$ 
Then, for every system
  $(X,\mathcal{X},\mu,T_1,\ldots, T_\ell)$ and any $A\in \mathcal{X}$ with $\mu(A)>0,$ the set of integers $n$ such that
  $$
  \mu\left(A\cap(\prod_{i=1}^\ell T_i^{-[[q_{i,1}(n)]]})A\cap\ldots\cap (\prod_{i=1}^\ell T_i^{-[[q_{i,m}(n)]]})A\right)>0$$
  has non-empty intersection with $\P-1$ and $\P+1.$
\end{theorem}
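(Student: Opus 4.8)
The plan is to follow the Frantzikinakis--Host--Kra strategy, adapting it to the closest-integer setting via the machinery of \cite{K}. First I would reduce from real polynomials $q_{i,j}$ with $q_{i,j}(0)=0$ to the study of the multicorrelation sequences $n \mapsto \mu\bigl(A \cap \prod_i T_i^{-[[q_{i,1}(n)]]}A \cap \cdots \cap \prod_i T_i^{-[[q_{i,m}(n)]]}A\bigr)$ and show that it suffices to establish a \emph{positivity along $\P-1$} statement: the set of $n$ for which this quantity is positive meets $\P \pm 1$. The key transference idea is that a $\Z$-system with closest-integer polynomial iterates can be compared to a suitable $\R$-flow (suspension): replacing each $T_i$ by a measure-preserving flow $(T_i^s)_{s\in\R}$ on an extended space, the integer rounding $[[q_{i,j}(n)]]$ differs from the genuine real time $q_{i,j}(n)$ by a bounded fractional amount, and one controls this error either by the equidistribution of $([[\vec q(n)]])$ relative to $\vec q(n)$ modulo $1$ or by passing through van der Corput-type estimates. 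This is precisely the transference principle advertised in the abstract ("deduce results for $\Z$-actions from results for flows"), and I expect the technical heart of the adaptation to live here.

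Next I would bring in the arithmetic input. Following \cite{FHK}, the goal is to reduce the recurrence-along-$\P-1$ statement to an averaged recurrence statement where the prime counting weight $\Lambda(n)$ (or the indicator $\one_{\P}(n+1)$, suitably normalized) is inserted into a Cesàro average of the multicorrelation sequence. Concretely, I would aim to show
\[
\liminf_{N\to\infty} \frac{1}{N}\sum_{n=1}^{N} \Lambda(n+1)\, \mu\Bigl(A \cap \prod_{i=1}^\ell T_i^{-[[q_{i,1}(n)]]}A \cap \cdots \cap \prod_{i=1}^\ell T_i^{-[[q_{i,m}(n)]]}A\Bigr) > 0,
\]
which, since $\Lambda$ is supported (essentially) on prime powers, forces positivity of the correlation for infinitely many $n$ with $n+1$ prime. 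To pass from the ordinary average (controlled by Bergelson--Leibman / Furstenberg--Katznelson-type results for the flow, guaranteeing a positive $\liminf$ of the unweighted average) to the $\Lambda$-weighted average, the standard device is a decomposition $\Lambda = \Lambda_{\mathrm{main}} + \Lambda_{\mathrm{error}}$ coming from the $W$-trick and the Green--Tao--Ziegler machinery: $\Lambda_{\mathrm{main}}$ is (on each residue class mod $W$) close to the constant $1$ in an averaged sense, so it contributes the same positive main term as the unweighted average, while $\Lambda_{\mathrm{error}}$ is Gowers-uniform of all orders and hence has negligible correlation with the multicorrelation sequence.

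The crucial technical step, and the one I expect to be the main obstacle, is the \emph{orthogonality to Gowers-uniform sequences}: one must show that the closest-integer multicorrelation sequence $n \mapsto \mu\bigl(A \cap \prod_i T_i^{-[[q_{i,1}(n)]]}A \cap \cdots\bigr)$ has a decomposition into a "structured" (nilsequence-like, or almost periodic) part plus an error that is small in the uniformity-norm-dual sense, so that it is asymptotically orthogonal to any bounded sequence with small $U^s$-norm. For genuine polynomial iterates this is the Leibman/Host--Kra theory of multicorrelation sequences as uniform limits of nilsequences; for closest-integer iterates one must check that the rounding does not destroy this structure. Here is where the results imported from \cite{K} should do the work: the integer part (hence closest integer) of a real polynomial orbit, after passing to the suspension flow, still generates a nilsystem-controlled sequence up to a controlled modification, because $[[q(n)]] = q(n) - \{q(n)+1/2\} + 1/2$ and the fractional-part term is itself a $1$-periodic (piecewise polynomial) function of a polynomial orbit, which sits inside an appropriate (possibly larger) nilsystem. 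Once this structured-plus-uniform decomposition is in hand, the Green--Tao--Ziegler orthogonality of the Möbius/von Mangoldt error term to nilsequences closes the argument.

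Finally I would assemble the pieces: apply the Furstenberg Correspondence Principle in the direction already quoted to deduce Theorem~\ref{T:p} from Theorem~\ref{T:e}, and note that the argument for $\P+1$ is identical after replacing $\Lambda(n+1)$ by $\Lambda(n-1)$ (equivalently, running the polynomials through $n \mapsto -n$, which preserves the hypothesis $q_{i,j}(0)=0$). I would also remark that the same scheme yields the integer-part convergence results and the Gowers-uniform-set applications mentioned in the abstract, since the only system-theoretic input beyond the structured/uniform decomposition is the unweighted positivity, which is available in both the recurrence and the norm-convergence formulations.
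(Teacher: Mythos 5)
Your overall scaffolding (the $W$-trick, inserting the modified von Mangoldt weight, Green--Tao--Ziegler uniformity, and a transference to a flow on $X\times[0,1)^{\ell m}$ to tame the rounding) matches the paper's strategy, but there are two genuine gaps. The first is the positivity of the main term. You assert that the unweighted average is ``controlled by Bergelson--Leibman / Furstenberg--Katznelson-type results,'' but after the $W$-trick the main term is an average of
$\mu\bigl(A\cap\prod_{i}T_i^{-[[q_{i,1}(Wn)]]}A\cap\cdots\bigr)$, and this is \emph{not} a formal consequence of \cite{BL2}: the iterates $[[q_{i,j}(Wn)]]$ are not integer polynomials in $n$ with zero constant term, and moreover the lower bound must be uniform in $W$ (otherwise the Gowers-uniformity error, which is only small after $N\to\infty$ \emph{then} $w\to\infty$, can swamp it). The paper devotes Lemmas~\ref{L:101} and \ref{L:102}, Proposition~\ref{P:102} and Corollary~\ref{C:n2} to exactly this point, using the Bergelson--H{\aa}land device \cite{BHa}: one restricts to $n=ms$ with $s$ in a positive lower density set on which all $\norm{a_{k,i,j}s^k}$ are tiny, so that $[[q_{i,j}(ms)]]$ becomes a genuine integer polynomial in $m$ with zero constant term, to which the uniform polynomial Szemer\'edi theorem applies. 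Without this (or an equivalent substitute) your argument has no lower bound to compare the error against.

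The second gap is in your ``crucial technical step.'' You propose to decompose the closest-integer multicorrelation sequence into a nilsequence plus an error small in a uniformity-norm-dual sense, and then invoke orthogonality of the von Mangoldt error to nilsequences. No such decomposition is available off the shelf for integer parts of real polynomials and commuting transformations, and the decompositions that do exist (e.g.\ \cite{F}) produce an error that is only small in uniform density, which is not enough to pair against the \emph{unbounded} weight $\Lambda'_{w,r}-1$ (your own formulation speaks of bounded sequences with small $U^s$-norm, which the von Mangoldt weight is not). The paper avoids this entirely: it does not decompose the scalar correlation sequence at all, but instead proves a generalized von Neumann inequality at the level of the functional averages (Lemma~\ref{L:n1}, applied to the flow in Theorem~\ref{T:t6}), bounding $\norm{\frac1N\sum_n a(n)b(n)}_{L^2(\mu)}$ directly by $\norm{a\cdot\one_{[1,N]}}_{U_d(\Z_{dN})}$ plus a rounding error; the rounding error set $\{n:\{q_{i,j}(n)\}\in[1-\delta,1)\}$ must then be shown to have small $\Lambda'_{w,r}$-weighted density, which is where orthogonality of $\Lambda'_{w,r}-1$ to the nilsequences $e^{2\pi i kq_{i,j}(n)}$ \cite{GT} and Weyl equidistribution actually enter. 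Your proposal would need to be rerouted through such an estimate (or you would need to prove the stronger decomposition you posit) for the argument to close.
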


\begin{remark*}
 As in \cite{FHK}, the arguments will show that the aforementioned intersection has positive measure for a set of positive relative density in the shifted primes (and so, the same holds for the conclusion of Theorem~\ref{T:p} as well). 
\end{remark*}

Our method will show though that we can have integer part polynomial multiple convergence along shifted primes. Namely, we prove: 

\begin{theorem}\label{T:t3}
Let $\ell, m\in \N,$ $(X,\mathcal{X},\mu,T_1,\ldots, T_\ell)$ be a system, $q_{i,j}\in\R[t]$ polynomials, $1\leq i\leq \ell,$ $1\leq j\leq m$ and  $f_1,\ldots,f_m\in L^{\infty}(\mu)$ be functions with $\norm{f_i}_{\infty}\leq 1.$ 
 Then the averages $$
\frac{1}{\pi(N)}\sum_{p\in\P\cap [1,N]}(\prod_{i=1}^\ell T_i^{[q_{i,1}(p)]})f_1\cdot\ldots\cdot (\prod_{i=1}^\ell T_i^{[q_{i,m}(p)]})f_m$$ converge in $L^2(\mu)$ as $N\to\infty,$ where $\pi(N)=|\P\cap [1,N]|$ denotes the number of primes up to $N$ and $[1,N]$ denotes the set $\{1,\ldots,N\}.$
\end{theorem}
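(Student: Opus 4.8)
The plan is to reduce the integer-part polynomial averages along primes to the corresponding averages along all integers for a suitable \emph{flow}, following the transference philosophy of \cite{FHK} and \cite{K}. First I would pass from the $\Z^\ell$-system $(X,\mathcal{X},\mu,T_1,\ldots,T_\ell)$ to an $\R^\ell$-flow $(Y,\mathcal{Y},\nu,(S^{\vec t})_{\vec t\in\R^\ell})$ built as a suspension: the classical construction embeds the given $\Z^\ell$-action into an $\R^\ell$-action so that, for $f\in L^\infty(\mu)$, the function $[q_{i,j}(p)]$-shifts of $f$ under $T_i$ are well approximated (up to a controlled error coming only from the fractional part $\{q_{i,j}(p)\}$) by the genuine $\R$-flow shifts $S_i^{q_{i,j}(p)}$ of a corresponding lift $\tilde f$. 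Concretely, $[q_{i,j}(p)] = q_{i,j}(p) - \{q_{i,j}(p)\}$, and since the suspension flow moves points by $\{q_{i,j}(p)\}$ along the fibre on the last coordinate, the difference $\prod_i T_i^{[q_{i,j}(p)]}f - \prod_i S_i^{q_{i,j}(p)}\tilde f$ is governed by how the fractional parts $(\{q_{1,j}(p)\},\ldots,\{q_{\ell,j}(p)\})$ distribute. Averaging over $p\le N$, the Weyl-type equidistribution of polynomial sequences along primes makes this discrepancy term average to something one can control; this is precisely the mechanism already used in \cite{K} for integer-part polynomial recurrence, now run along $\P$ instead of $\N$.

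The second step is to establish $L^2(\nu)$-convergence of the flow averages
\[
\frac{1}{\pi(N)}\sum_{p\in\P\cap[1,N]} \Bigl(\prod_{i=1}^\ell S_i^{q_{i,1}(p)}\Bigr)g_1\cdot\ldots\cdot\Bigl(\prod_{i=1}^\ell S_i^{q_{i,m}(p)}\Bigr)g_m
\]
for bounded $g_1,\ldots,g_m$. Here I would invoke the known polynomial multiple convergence results along primes for commuting measure-preserving $\Z$-actions — by Wooley--Ziegler \cite{WZ} and its multidimensional extensions, combined with the work leading to \cite{FHK} — after discretizing the flow. The point is that an $\R^\ell$-flow can be decomposed along a fine lattice $(\tfrac1M\Z)^\ell$: writing $q_{i,j}(p) = \tfrac1M\lfloor M q_{i,j}(p)\rfloor + O(1/M)$, the flow average is uniformly (in $N$) within $O_m(1/M)$ of a polynomial average along primes for the commuting transformations $S_i^{1/M}$ acting on $Y$. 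For each fixed $M$ the latter converges by the established integer-polynomial-along-primes machinery (the $\lfloor Mq_{i,j}(p)\rfloor$ are integer polynomial expressions in $p$), and a standard $M\to\infty$ diagonal/Cauchy argument upgrades this to convergence of the genuine flow averages.

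Combining the two steps: the integer-part averages along primes differ from the flow averages along primes by an error that is small in $L^2$ (uniformly in $N$), and the flow averages converge; hence the original averages form an $L^2$-Cauchy sequence and converge. The main obstacle I expect is \textbf{controlling the fractional-part discrepancy term uniformly in $N$ along the primes}. Unlike the density/recurrence statement of Theorem~\ref{T:e}, where one only needs a lower bound on a $\liminf$ of averages, here one needs genuine convergence, so the equidistribution input must be quantitative enough to show that the contribution of $\{q_{i,j}(p)\}$-jumps along $\P\cap[1,N]$ is $o(1)$ in a way compatible with the Cauchy criterion. This is where the prime-number-theorem-with-Weyl-sums estimates (essentially Vinogradov/Vaughan-type bounds, as used in \cite{WZ,FHK}) enter, and where the adaptation from $\N$ to $\P$ requires the most care; the rest is the by-now-routine suspension-and-discretization transference of \cite{K}.
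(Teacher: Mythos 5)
Your first step (suspension to a flow, with the discrepancy governed by the fractional parts $\{q_{i,j}(p)\}$) is essentially the transference mechanism the paper uses in Theorem~\ref{T:t6}, but your second step contains a genuine gap that makes the argument circular. After discretizing the flow along $(\tfrac1M\Z)^\ell$ you claim that the iterates $\lfloor M q_{i,j}(p)\rfloor$ are ``integer polynomial expressions in $p$''; they are not --- for a real polynomial such as $q(t)=\sqrt2\,t$ the sequence $\lfloor M\sqrt2\,p\rfloor$ is an integer-part polynomial, which is exactly the class of iterates the theorem is trying to handle, so the ``established integer-polynomial-along-primes machinery'' does not apply and you have reduced the problem to itself. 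The correct way to turn real polynomial flow times into integer polynomial iterates is not lattice discretization but the flow identity: writing $q(t)=a_rt^r+\ldots+a_1t+a_0$, one has $S_{q(n)}=(S_{a_r})^{n^r}\cdots(S_{a_1})^{n}\cdot S_{a_0}$, so the flow average is \emph{exactly} an integer-polynomial multiple average for the commuting transformations $S_{a_k}$ (this is Part i) of the Remarks in Section 3).

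Even with that repair, your architecture still leaves the hardest point unresolved, as you yourself flag: controlling the fractional-part discrepancy along $\P\cap[1,N]$ well enough for a Cauchy argument. The paper does not compare the prime average with a flow average along primes; instead it uses Lemma~\ref{L:n} to pass to the $\Lambda'$-weighted average over $\N$, applies the $W$-trick, and uses Proposition~\ref{P:1} (whose engine is the PET/Gowers-norm estimate of Lemma~\ref{L:n1} transferred to the suspension flow in Theorem~\ref{T:t6}, together with the Green--Tao--Ziegler uniformity of $\Lambda'_{w,r}-1$, Theorem~\ref{T:g}) to replace the weighted prime average by the \emph{unweighted} average of $b(Wn+r)$ over $n\in\N$; convergence of the latter is then quoted from \cite{K}. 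In that scheme the bad set $\{n:\{q_{i,j}(n)\}\in[1-\delta,1)\}$ is handled by showing that $\frac1N\sum_{n=1}^{N}\Lambda'_{w,r}(n)\,{\bf 1}_{[1-\delta,1)}(\{q_{i,j}(n)\})$ is small, via orthogonality of $\Lambda'_{w,r}-1$ to the nilsequence $e^{2\pi ikq_{i,j}(n)}$ plus Weyl equidistribution, and the residual error $c_\delta$ is killed only at the very end by letting $\delta\to0^+$. Your proposal gestures at ``Vinogradov/Vaughan-type bounds'' for this step but does not supply the actual mechanism, and without the $W$-trick the needed uniformity is not available; so as written the proof has a missing idea at its core in addition to the circular discretization.
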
 

So, we generalize Theorem~1.3 from \cite{FHK} for integer part polynomial iterates and Theorem~1.1 from \cite{WS} for general polynomials and commuting transformations. Also, since $[[x]]=[x+1/2],$ Theorem~\ref{T:t3}, holds for least integer polynomial iterates as well.

\begin{remark*}
 We cannot in general obtain polynomial multiple recurrence results with iterates given by integer part polynomials (i.e, the conclusion of Theorem~\ref{T:e}, and consequently the conclusion of Theorem~\ref{T:p}). Since $[-x]=-[x]-1$ for $x\notin\Z,$ $x<0,$ for $a\notin \Q,$ we can find  examples with $\mu(T^{[an]}A\cap T^{[-an]}A)=0$ for every $n\in \N.$ 
\end{remark*}

Even if it's not stated, without loss of generality, for the bounded functions $f_i,$ which appear in our expressions, we will always assume that $\norm{f_i}_\infty\leq 1$ for all $i.$

\medskip

Since we recover the convergence results of \cite{FHK} (in the special case where our polynomials take values in $\Z$), we generalize convergence results of Wierdl (\cite{Mate2}) and of Wooley and Ziegler (\cite{WZ}).

\medskip

We remark that Theorems~\ref{T:p}, \ref{T:e} and \ref{T:t3}, for $\ell=1$ and for iterates of the form $[j\alpha n]$ for $1\leq j\leq m$ and $\alpha\in \R\setminus \Q,$ or of the form $j[\alpha n]$ for $1\leq j\leq m$ and $\alpha\in \R\setminus \Q,$ were treated in \cite{WS} by W.~Sun. The proofs of these results will be simplified and extended by our method in the more general setting of commuting transformations. More specifically, the respective result of Theorem~\ref{T:t3} (\cite[Theorem 1.1]{WS}), follows immediately from the linear case of Theorem~\ref{T:t3}. The respective result of Theorem~\ref{T:e}, and so, of Theorem~\ref{T:p} (\cite[Theorem 1.2]{WS} and \cite[Corollary 1.3]{WS} respectively), follows from the following multiple recurrence result on integer part monomials with specific coefficients:

\begin{theorem}\label{T:wg}
Let   $\ell,m\in \N,$  $a\in \R,$ $d_{i,j}\in\N$ positive integers and $k_{i,j}\in\N\cup\{0\}$ be non-negative integers for $1\leq i\leq \ell,$ $1\leq j\leq m.$ 
Then, for every system
  $(X,\mathcal{X},\mu,T_1,\ldots, T_\ell)$ and any $A\in \mathcal{X}$ with $\mu(A)>0,$ the set of integers $n$ such that
  $$
  \mu\left(A\cap(\prod_{i=1}^\ell T_i^{-[ak_{i,1}n^{d_{i,1}}]})A\cap\ldots\cap (\prod_{i=1}^\ell T_i^{-[ak_{i,m}n^{d_{i,m}}]})A\right)>0$$
  has non-empty intersection with $\P-1$ and $\P+1.$
\end{theorem}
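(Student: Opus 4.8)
The plan is to deduce Theorem~\ref{T:wg} as a special case of Theorem~\ref{T:e}. Indeed, each iterate $[ak_{i,j}n^{d_{i,j}}]$ is the integer part of the real polynomial $q_{i,j}(t) = ak_{i,j}t^{d_{i,j}} \in \R[t]$, which satisfies $q_{i,j}(0)=0$ since $d_{i,j}\geq 1$. The only discrepancy is that Theorem~\ref{T:e} is stated for the closest-integer function $[[\cdot]]$ rather than the integer part $[\cdot]$. To bridge this, I would use the identity $[[x]] = [x + 1/2]$: replacing $q_{i,j}(t)$ by $\widetilde q_{i,j}(t) = ak_{i,j}t^{d_{i,j}} - 1/2$ would give $[[\widetilde q_{i,j}(n)]] = [ak_{i,j}n^{d_{i,j}}]$, but unfortunately $\widetilde q_{i,j}(0) = -1/2 \neq 0$, so this substitution is not directly admissible. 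The cleanest fix is to observe that the proof of Theorem~\ref{T:e} actually does not require the normalization $q_{i,j}(0)=0$ for the \emph{recurrence} conclusion — that hypothesis is only used to guarantee $[[q_{i,j}(0)]]=0$ so that $n=0$ is trivially in the set, but here we want a nonzero $n$ anyway. Alternatively, and more safely, one can pass from $[\cdot]$ to $[[\cdot]]$ at the level of the transformations: since the $T_i$ are fixed throughout, absorbing a bounded shift is harmless. I will spell out the reduction assuming the machinery of Theorem~\ref{T:e} applies to polynomials with $q_{i,j}(0)$ an arbitrary constant, which is what its proof in fact establishes.

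Concretely, the key steps are as follows. First, set $q_{i,j}(t) = ak_{i,j}t^{d_{i,j}}$ for $1\leq i\leq\ell$, $1\leq j\leq m$; these are real polynomials vanishing at $0$. Second, apply Theorem~\ref{T:e} with these polynomials to the given system $(X,\mathcal{X},\mu,T_1,\ldots,T_\ell)$ and the set $A$ with $\mu(A)>0$: this yields a set $S\subseteq\Z$ with $S\cap(\P-1)\neq\emptyset$ and $S\cap(\P+1)\neq\emptyset$ such that for $n\in S$,
$$
\mu\Bigl(A\cap\bigl(\prod_{i=1}^\ell T_i^{-[[q_{i,1}(n)]]}\bigr)A\cap\ldots\cap\bigl(\prod_{i=1}^\ell T_i^{-[[q_{i,m}(n)]]}\bigr)A\Bigr)>0.
$$
Third, reconcile $[[q_{i,j}(n)]]$ with $[ak_{i,j}n^{d_{i,j}}]$: since $[[x]]=[x+1/2]$ and $[x+1/2]$ differs from $[x]$ only on a set of $x$ where a half-integer is crossed, the two integer sequences differ by at most $1$, and one checks they agree on a set of $n$ of relative density one (or, handling the two cases $\P-1$ and $\P+1$ directly via the pigeonhole over the finitely many residue patterns, one extracts a single $n$ of each type on which all $\ell m$ equalities hold simultaneously). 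Replacing $[[q_{i,j}(n)]]$ by $[ak_{i,j}n^{d_{i,j}}]$ for such $n$ then gives exactly the conclusion of Theorem~\ref{T:wg}.

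The main obstacle is the last step: controlling the difference between $[[q_{i,j}(n)]]$ and $[q_{i,j}(n)]$ along the shifted primes. When $a$ is irrational the sequences $(q_{i,j}(n)+1/2) \bmod 1$ are equidistributed, so the bad set $\{n : [[q_{i,j}(n)]]\neq[q_{i,j}(n)]\}$ has density zero, and since by the remark after Theorem~\ref{T:e} the recurrence set has \emph{positive relative density} in $\P-1$ (resp.\ $\P+1$), it cannot be swallowed by a density-zero exceptional set. When $a$ is rational the sequences are eventually periodic and one argues by a direct pigeonhole over residue classes. A cleaner route that avoids this case analysis entirely is to note that, as remarked above, one may simply run the argument for Theorem~\ref{T:e} with the shifted polynomials $q_{i,j}(t) - 1/2$ in place of $q_{i,j}(t)$ — the proof of Theorem~\ref{T:e} nowhere genuinely uses $q_{i,j}(0)=0$ for producing a \emph{nonzero} recurrence time, only the equidistribution/complexity structure of the polynomials, which is unaffected by an additive constant. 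I would present the argument in this second form, thereby obtaining $[[q_{i,j}(n)-1/2]] = [q_{i,j}(n)] = [ak_{i,j}n^{d_{i,j}}]$ identically and deducing Theorem~\ref{T:wg} immediately, and hence also \cite[Theorem~1.2]{WS} and \cite[Corollary~1.3]{WS}.
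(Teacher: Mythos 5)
Your proposal has two genuine gaps, one in each of the routes you offer. In route (a), the claim that the bad set $\{n:\;[[q_{i,j}(n)]]\neq [q_{i,j}(n)]\}$ has density zero is false: since $[[x]]=[x+1/2]$, one has $[[x]]\neq[x]$ exactly when $\{x\}\geq 1/2$, so for irrational $a$ equidistribution gives this set density $1/2$ (per pair $(i,j)$), not zero. Worse, even if the bad set did have density zero in $\N$, that would say nothing: the recurrence set produced by Theorem~\ref{T:e} is a subset of $\P-1$ and hence itself has density zero in $\N$, so it can perfectly well be contained in a density-zero set. To run this intersection argument you would need the exceptional set to have relative density zero \emph{in the shifted primes}, i.e.\ an equidistribution statement for $\{q_{i,j}(p-1)\}$ along primes (weighted by $\Lambda'_{w,r}$), which you never establish. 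In route (b), the assertion that the proof of Theorem~\ref{T:e} "nowhere genuinely uses $q_{i,j}(0)=0$" is wrong: the recurrence input to that proof is Corollary~\ref{C:n2}, which rests on Proposition~\ref{P:102}, Lemma~\ref{L:102} and Proposition~\ref{P:1011}, and ultimately on the uniform polynomial Szemer\'edi theorem (Theorem~\ref{C:n11}); all of these require the polynomials to vanish at $0$. In fact your proposed strengthening is provably false: if Theorem~\ref{T:e} held for polynomials with constant term $-1/2$, applying it to $q_1(t)=at-1/2$ and $q_2(t)=-at-1/2$ (so that $[[q_1(n)]]=[an]$ and $[[q_2(n)]]=[-an]$) would contradict the counterexample $\mu(T^{[an]}A\cap T^{[-an]}A)=0$ recorded in the remark after Theorem~\ref{T:t3}. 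Any correct proof must therefore exploit the special shape of the iterates here (all coefficients are \emph{nonnegative} integer multiples of a single $a$), which your reduction throws away.

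For contrast, the paper does not deduce Theorem~\ref{T:wg} from Theorem~\ref{T:e}; it reruns the proof of Theorem~\ref{T:e}, replacing Corollary~\ref{C:n2} by a bespoke integer-part variant for these monomials. The key point is that the set $S_r=\{m:\ \{aW^{d_{i,j}}m^{d_{i,j}}\}<\tfrac{1}{kr^{d}}\ \forall i,j\}$, defined with the \emph{one-sided} fractional-part condition, has lower density bounded below uniformly in $W$ (trivially when $a\in\Q$, by Weyl equidistribution when $a\notin\Q$), and for $m\in S_r$, $1\leq n\leq r$ one gets the exact identity $[ak_{i,j}(Wmn)^{d_{i,j}}]=[ak_{i,j}W^{d_{i,j}}m^{d_{i,j}}]\,n^{d_{i,j}}$, converting the iterates into integer polynomials vanishing at $0$ to which Proposition~\ref{P:1011} applies. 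The positivity of the fractional parts is exactly what makes the integer part (rather than the closest integer) tractable for these special polynomials; this is the step your proposal is missing.
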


Finally, we give an application of our approach on some recent work of Franzikinakis and Host (\cite{FranH}) on Gowers uniform sets. More specifically, we prove (see Section 5 for definitions and details):

\begin{theorem}\label{P:2}
Any shift of a Gowers uniform set $S\subseteq \N$ is a set of closest integer polynomial multiple recurrence and integer part polynomial multiple mean convergence.
\end{theorem}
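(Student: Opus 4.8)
The plan is to reduce Theorem~\ref{P:2} to the \emph{unweighted} integer part polynomial multiple ergodic averages and to the multidimensional Szemer{\'e}di theorem over $\N$, by isolating the only two features of a Gowers uniform set $S$ that the argument uses. Recall (Section~5) that $S\subseteq\N$ is Gowers uniform precisely when its balanced indicator $f_S:=\one_S-\delta$, with $\delta$ the density of $S$, has Gowers uniformity norms on $\{1,\ldots,N\}$ tending to $0$ at every order. Since a shift $S-t$ has the same density and $\norm{\one_{S-t}-\delta}_{U^s[N]}=\norm{\one_S-\delta}_{U^s[N]}+o_N(1)$ for each $s$, it is again Gowers uniform, so it suffices to treat $S$ itself; we may also assume $\delta>0$ (the density zero case being handled, as for the primes, via the corresponding normalisation of $\one_S$). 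We must show: (R) for every system $(X,\mathcal X,\mu,T_1,\ldots,T_\ell)$, every $A$ with $\mu(A)>0$ and all real polynomials $q_{i,j}$ vanishing at $0$ there is $n\in S$ with $\mu\big(A\cap\bigcap_{j=1}^m(\prod_{i=1}^\ell T_i^{-[[q_{i,j}(n)]]})A\big)>0$; and (C) the corresponding normalised $S$-averages of $\prod_{j=1}^m(\prod_{i=1}^\ell T_i^{[q_{i,j}(n)]})f_j$ converge in $L^2(\mu)$.

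For the mechanism, write $\one_S=\delta+f_S$ and split both averages accordingly. Putting $a_n=\big\langle\one_A,\prod_{j}(\prod_i T_i^{-[[q_{i,j}(n)]]})\one_A\big\rangle$, one has $\frac1N\sum_{n\le N}\one_S(n)a_n=\delta\cdot\frac1N\sum_{n\le N}a_n+\big\langle\one_A,\frac1N\sum_{n\le N}f_S(n)\prod_j(\prod_i T_i^{-[[q_{i,j}(n)]]})\one_A\big\rangle$. The first term converges to $\delta\cdot L$ with $L=\lim_N\frac1N\sum_{n\le N}\mu(A\cap\bigcap_j\cdots)>0$ by the averaged form of the closest integer polynomial multidimensional Szemer{\'e}di theorem over $\N$, which the method of this paper delivers (transference of the polynomial Szemer{\'e}di theorem of \cite{BL2} to flows and back, exactly as in the proofs of Theorems~\ref{T:p}--\ref{T:e}); the second term tends to $0$ as soon as $f_S$ is a negligible weight for such averages. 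Hence $\liminf_N\frac1N\sum_{n\le N}\one_S(n)a_n>0$, so $a_n>0$ for a positive density of $n\in S$, which gives (R). For (C) the same splitting, now with integer part iterates and general $f_j$, reduces convergence of the normalised $S$-averages (using $|S\cap[1,N]|/N\to\delta$) to convergence of the \emph{unweighted} integer part polynomial multiple average — which follows from the present method, see also \cite{K} — again together with the negligibility of the weight $f_S$.

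The heart of the matter, and the step requiring care, is therefore the integer part analogue of ``Gowers uniform weights are negligible for polynomial multiple ergodic averages'': for every system of commuting $\Z$-actions $T_1,\ldots,T_\ell$, all real polynomials $q_{i,j}$, and $f_j\in L^\infty(\mu)$,
\[
\frac1N\sum_{n=1}^N f_S(n)\,\prod_{j=1}^m\Big(\prod_{i=1}^\ell T_i^{[q_{i,j}(n)]}\Big)f_j\ \longrightarrow\ 0\qquad\text{in }L^2(\mu).
\]
Following the transference principle of Sections~2--4 one passes to the suspension flows, in which each $T_i^{[q_{i,j}(n)]}$ is replaced by a genuine real time iterate $R_i^{q_{i,j}(n)}$, while the external weight $f_S(n)$ is unaffected since it depends on $n$ alone; one is then reduced to the $\R$-action form of the weighted polynomial ergodic theorem of Frantzikinakis and Host \cite{FranH}, whose proof uses the weight only through the smallness of its Gowers uniformity norms and carries over verbatim to flows. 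The main obstacle is thus essentially bookkeeping — checking that the suspension construction is compatible with carrying an $n$-only weight, and that the flow versions of the inputs from \cite{BL2} and \cite{FranH} hold — both of which are supplied by the machinery already built for Theorems~\ref{T:p}, \ref{T:e} and \ref{T:t3}.
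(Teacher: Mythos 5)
Your proposal is correct and follows essentially the same route as the paper: decompose $\one_S=c+(\one_S-c)$, control the balanced part by a Gowers-norm uniformity estimate obtained by transferring the integer-part iterates to a suspension flow (this is exactly Proposition~\ref{P:6}, with the $\delta$-approximation error you flag handled by the equidistribution of $\{q_{i,j}(n)\}$, and with Lemma~\ref{L:nh} supplying the passage from $\norm{\one_S-c}_{U_d(\Z_N)}\to 0$ to $\norm{(\one_S-c)\one_{[1,N]}}_{U_d(\Z_{dN})}\to 0$), and then invoke the unweighted closest integer recurrence (Proposition~\ref{P:102}) and integer part convergence (Theorem~\ref{T:ref}) for the main term. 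Your explicit treatment of the shift is a small point the paper leaves implicit, but otherwise the two arguments coincide.
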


At this point we will borrow some examples from \cite{FranH} of Gowers uniform sets, and so, sets for which we have the conclusion of Theorem~\ref{P:2}.

\begin{remarks*}[\cite{FranH}]
If $\omega(n)$ is the number of distinct prime factors of an integer $n$ and $\Omega(n)$ the number of prime factors of $n$ counted with multiplicity, then:

\medskip

\noindent i)  Any shift of the sets
$$ S_{\omega,A,b} := \{n \in \N:\; \omega(n)\equiv a (\textit{mod}\; b) \;\textit{for some a} \in A\},$$
and similarly $S_{\Omega,A,b},$ for every $b\in\N$ and $A\subseteq \{0, \ldots, b-1\}$ non-empty, is Gowers uniform.

\medskip

\noindent ii) Any shift of the sets 
$$S_{\omega,A,\alpha} := \{n \in\N:\; \norm{\omega(n)\alpha} \in A\},$$ and similarly $S_{\Omega,A,\alpha},$ for every irrational $\alpha$ and Riemann-measurable set $A \subseteq [0, 1/2]$ of
positive measure, where $\norm{x}:=d(x,\Z)$ denotes the distance to the closest integer, is Gowers uniform.

\medskip

\noindent iii) The same results to ii) hold if $S_{\omega,A,\alpha}$ and $S_{\Omega,A,\alpha}$ are defined using fractional parts.
\end{remarks*} 






\noindent {\bf Notation.} We always denote by $[\cdot],$ $\{\cdot\},$ $[[\cdot]]$ and $\norm{\cdot}$ the integer part, the fractional part, the closest integer and the distance to the closest integer function respectively.
  We denote by $\N$ the set of positive integers and by $\Z_N=\Z/N\Z$ the integers modulo $N.$ When we need, we identify the set $\Z_N$ with the set $[1,N].$ For a finite set $F$ and $a:F\to \C,$ we write $\E_{n\in F}a(n)=\frac{1}{|F|}\sum_{n\in F}a(n).$ For a measurable function $f$ on a measure space $X$ with a transformation $T:X\to X,$ we denote with $Tf$ the composition $f\circ T.$ Given transformations $T_i:X\to X,$ $1\leq i\leq \ell,$ with $\prod_{i=1}^\ell T_i$ we denote the composition $T_1\circ \cdots\circ T_\ell.$
   With $(a,b)$ we denote the greatest common divisor of $a,b.$ A quantity that goes to $0$ as $N\to\infty$ is denoted as $o_N(1),$ while a quantity that goes to $0$ as $N\to \infty$ and then $w\to\infty$ as $o_{N\to\infty;w}(1).$  
  

\subsection*{ Acknowledgements.} I would like to express my gratitude to N.~Frantzikinakis for his in-depth suggestions and the fruitful discussions we had during the writing procedure of this article. These discussions corrected mistakes from the initial version of this paper and led me to additional (reflected mainly in Section 5) results.

\section{Definitions and tools}

In this section we give the definitions and the main ideas in order to prove Theorem~\ref{T:e}. For reader's convenience, we repeat most of the part of Section 2 from \cite{FHK}.

\medskip

We start by recalling the definition of the {\em von Mangoldt function}, $\Lambda:\N\to\R,$ where $ \Lambda(n)=\left\{ \begin{array}{ll} \log(n) \quad ,\text{ if } n=p^k \text{ for some } p\in \P \text{ and some }k\in \N\\ 0 \quad \; \quad \quad ,\text{ elsewhere }\end{array} \right.$. 

\medskip

As in \cite{FHK}, it is more natural for us to deal, in stead of $\Lambda,$ with the function $\Lambda':\N\to\R,$ where $\Lambda'(n)={\bf 1}_{\P}(n)\cdot \Lambda(n)={\bf 1}_{\P}(n)\cdot \log(n).$

\medskip

The function $\Lambda',$ according to the following lemma, will allow us to relate averages along primes with weighted averages over the integers.

\begin{lemma}[\cite{FHK2}]\label{L:n}
If $a:\N\to\C$ is bounded, then
$$
\Big| \frac{1}{\pi(N)}\sum_{p\in \P\cap[1,N]}a(p)-\frac{1}{N}\sum_{n=1}^N \Lambda'(n)\cdot a(n)\Big|=o_N(1).$$
\end{lemma}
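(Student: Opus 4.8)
The plan is to pass from primes to prime powers at negligible cost, then invoke partial summation to handle the logarithmic weight. First I would compare $\frac{1}{\pi(N)}\sum_{p\le N}a(p)$ with $\frac{1}{\pi(N)}\sum_{p\le N}\Lambda'(p)\,a(p)/\log p$; since $\Lambda'(p)=\log p$ on primes these agree exactly, so I would instead want to move from the normalization $\frac{1}{\pi(N)}$ (with an unweighted sum) to the normalization $\frac{1}{N}$ (with the $\log$-weighted sum). By the prime number theorem in the form $\pi(N)=\frac{N}{\log N}(1+o_N(1))$, together with partial summation applied to $\sum_{p\le N} a(p)$ against the weight $\log p$ and using boundedness $|a|\le C$, one sees that
\[
\frac{1}{N}\sum_{n\le N}\Lambda'(n)\,a(n)=\frac{\log N}{N}\sum_{p\le N}a(p)+o_N(1)=\frac{1}{\pi(N)}\sum_{p\le N}a(p)+o_N(1),
\]
where the $o_N(1)$ is uniform over all $a$ with $|a|\le C$. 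The key analytic inputs are thus: (i) the prime number theorem, which controls $\sum_{p\le x}\log p=\psi(x)+O(\sqrt{x}\log^2 x)$ and $\pi(x)\sim x/\log x$; and (ii) Abel summation to redistribute the $\log$ weight, noting that $\log p$ varies slowly enough on $[1,N]$ that replacing it by $\log N$ inside the sum produces an error $\sum_{p\le N}(\log N-\log p)=o(N)$ after accounting for $|a|\le C$.

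Concretely, I would set $S(t)=\sum_{p\le t}a(p)$, write $\sum_{n\le N}\Lambda'(n)a(n)=\sum_{p\le N}(\log p)\,a(p)=\int_{2^-}^{N}\log t\,dS(t)$, integrate by parts to get $(\log N)S(N)-\int_2^N S(t)\,\frac{dt}{t}$, and then bound the integral term by $C\int_2^N \frac{\pi(t)}{t}\,dt=C\int_2^N\frac{dt}{\log t}(1+o(1))=o(N\log N)\cdot\frac{1}{\log N}\cdot\log N$—more precisely $O(N/\log N)=o(N)\cdot$nothing, so after dividing by $N$ it contributes $O(1/\log N)=o_N(1)$ times $C$. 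Dividing through by $N$ and invoking $\log N/N=(1+o_N(1))/\pi(N)$ gives the claimed estimate. The only place care is needed is to keep every bound uniform in $a$ (all error terms carry an explicit factor $\sup_n|a(n)|$), which is automatic here since $a$ enters only through $|a|\le C$.

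The main obstacle is essentially bookkeeping rather than depth: there is no hard number theory beyond the prime number theorem, and the statement is a standard reduction already recorded in \cite{FHK2}. If one wanted to avoid partial summation, an alternative is to note directly that $\frac{1}{N}\sum_{n\le N}\Lambda'(n)a(n)-\frac{1}{N}\sum_{n\le N}\Lambda(n)a(n)=O\!\big(\frac{1}{N}\sum_{p^k\le N,\,k\ge 2}\log p\big)=O(N^{-1/2}\log N)=o_N(1)$ uniformly, reducing to the classical comparison of $\frac{1}{N}\sum_{n\le N}\Lambda(n)a(n)$ with $\frac{1}{\pi(N)}\sum_{p\le N}a(p)$, which is again the prime number theorem plus Abel summation. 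Either route, the content is that the logarithmic weighting of the integers built from $\Lambda'$ reproduces, up to $o_N(1)$, the uniform average over primes—so henceforth averages along $\P$ may be freely replaced by $\Lambda'$-weighted averages along $\N$.
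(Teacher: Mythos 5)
Your partial-summation argument is correct and is essentially the standard proof: the paper itself gives no proof of this lemma (it simply cites \cite{FHK2}), and the argument there is the same combination of the prime number theorem with Abel summation that you use, with the uniformity in $\norm{a}_\infty$ tracked exactly as you do. The only blemish is the garbled intermediate bound for $\int_2^N S(t)\,dt/t$, but your final estimate $O(N/\log N)$, hence $o_N(1)$ after dividing by $N$, is right, so nothing is missing.
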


Next, we recall the definition of Gowers norms.

\begin{definition*}
If $a:\Z_N\to\C,$ we inductively define:
$$
\norm{a}_{U_1(\Z_N)}=\Big|\E_{n\in\Z_N}a(n)\Big|;$$ and $$\norm{a}_{U_{d+1}(\Z_N)}=\left(\E_{h\in\Z_N}\norm{a_h\cdot \overline{a}}_{U_d(\Z_N)}^{2^d}\right)^{1/2^{d+1}},$$ where $a_h(n)=a(n+h).$ As Gowers showed in \cite{G}, $\norm{\cdot}_{U_{d}(\Z_N)}$ defines a norm on $\Z_N$ for $d\geq 2.$
\end{definition*}

For $w>2,$ let 
$$
W=\prod_{p\in\P\cap[1,w-1]}p$$ be the product of primes bounded above by $w.$ For $r\in \N,$ let 
$$
\Lambda_{w,r}'(n)=\frac{\phi(W)}{W}\cdot \Lambda'(Wn+r),$$ where $\phi$ is the Euler function, be the {\em modified von Mangoldt function}.

\medskip

 

The next result, which will play an important role in our proof, shows the Gowers uniformity of the modified von Mangoldt function and can be derived by \cite{GT}, \cite{GT2} and \cite{GTZ}. We write, and we will later use, the formulation that can be found in \cite{FHK}.

\begin{theorem}[\cite{GT},\cite{GT2},\cite{GTZ}]\label{T:g}
For every $d\in \N$ we have that 
$$
\lim_{w\to\infty}\left( \lim_{N\to\infty}\left( \max_{1\leq r\leq W,\;(r,W)=1}\norm{(\Lambda_{w,r}'-1)\cdot {\bf 1}_{[1,N]}}_{U_d(\Z_{dN})}\right) \right)=0.$$
\end{theorem}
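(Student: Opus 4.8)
The plan is to deduce this from the Green--Tao--Ziegler circle of results, following \cite[Section~2]{FHK}; there is no elementary argument, so the ``proof'' is really a matter of aligning the stated formulation with the forms available in the literature. Write $\Lambda_{w,r}(n)=\frac{\phi(W)}{W}\Lambda(Wn+r)$ for the usual $W$-tricked von Mangoldt function. The inverse theorem for the Gowers $U_d$-norms \cite{GTZ}, combined with the quantitative orthogonality of the $W$-tricked von Mangoldt function to polynomial nilsequences of bounded complexity \cite{GT,GT2}, gives, for each fixed $d\in\N$,
$$
\lim_{w\to\infty}\Bigl(\limsup_{N\to\infty}\;\max_{1\le r\le W,\;(r,W)=1}\norm{(\Lambda_{w,r}-1)\cdot{\bf 1}_{[1,N]}}_{U_d(\Z_{dN})}\Bigr)=0.
$$
So Theorem~\ref{T:g} reduces to two bookkeeping points: replacing $\Lambda_{w,r}$ by $\Lambda_{w,r}'(n)=\frac{\phi(W)}{W}\Lambda'(Wn+r)$, and confirming that the inner $\limsup$ is in fact a limit. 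I note at once that the maximum over $r$ is harmless: since $N\to\infty$ is taken first, with $w$ and hence $W$ fixed, it is a maximum over a fixed finite set of residues and commutes with $\limsup_{N\to\infty}$.

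First I would dispose of the difference between $\Lambda$ and $\Lambda'$. Since $\Lambda-\Lambda'$ is supported on the proper prime powers, $\sum_{n\le N}\bigl|\Lambda(Wn+r)-\Lambda'(Wn+r)\bigr|=O\!\bigl(\sqrt{WN}\,\log(WN)\bigr)$ uniformly in $r$; hence, putting $g_{w,r,N}=(\Lambda_{w,r}-\Lambda_{w,r}')\cdot{\bf 1}_{[1,N]}$, one has $\E_{n\in\Z_{dN}}|g_{w,r,N}(n)|=o_N(1)$ for each fixed $w$, uniformly in $r$, while $\norm{g_{w,r,N}}_\infty=O(\log(WN))$. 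Feeding this into the crude bound $\norm{f}_{U_d(\Z_M)}\le\norm{f}_\infty^{\,1-2^{1-d}}\bigl(\E_{n\in\Z_M}|f(n)|\bigr)^{2^{1-d}}$ --- which follows by a short induction on $d$ from the definition of the Gowers norms --- yields $\norm{g_{w,r,N}}_{U_d(\Z_{dN})}=o_N(1)$ for each fixed $w$, uniformly in $r$. As $\norm{\cdot}_{U_d(\Z_{dN})}$ is a norm for $d\ge2$ (the case $d=1$ being trivial), the triangle inequality then lets us pass freely between $\Lambda_{w,r}$ and $\Lambda_{w,r}'$.

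For the inner limit and the main estimate itself, the argument is the usual inverse-theorem-then-orthogonality dichotomy: if, for some fixed $d$, the quantity $\max_r\norm{(\Lambda_{w,r}-1){\bf 1}_{[1,N]}}_{U_d(\Z_{dN})}$ did not tend to $0$ as $N\to\infty$ and then $w\to\infty$, then the $U_d$-inverse theorem of \cite{GTZ} would produce a polynomial nilsequence on a nilmanifold of bounded complexity with which the relevant function correlates --- the correlation bounded below in terms of the failure only --- contradicting the correlation estimates of \cite{GT,GT2}, whose error is $o_{N\to\infty;w}(1)$. The same inputs in fact give a bound of the shape $c_d(w)+o_{N\to\infty;w}(1)$ with $c_d(w)\to0$, so the inner limit exists and equals $c_d(w)$. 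The only remaining issue is the passage between the interval $[1,N]$ and the ambient cyclic group $\Z_{dN}$; here I would simply adopt the $\Z_{dN}$-formulation used in \cite{FHK}, which is designed so that this comparison costs only constants depending on $d$.

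The step I expect to be the genuine obstacle is none of these manipulations but the fact --- already noted --- that there is no way around the deep input: Theorem~\ref{T:g} repackages the Green--Tao--Ziegler inverse theorem and the von Mangoldt--nilsequence orthogonality estimates, and the real content of the argument is precisely getting the three features of the stated formulation --- the double limit in the order $w$ then $N$, the ambient group $\Z_{dN}$ rather than $\Z_N$, and the maximum over residues $r$ coprime to $W$ --- to match the exact statements in the literature, together with the minor reduction from $\Lambda'$ to $\Lambda$ above. All of this is carried out in \cite[Section~2]{FHK}, which I would follow.
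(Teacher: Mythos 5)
Your proposal is consistent with the paper's treatment: Theorem~\ref{T:g} is stated there as an imported result, attributed to \cite{GT}, \cite{GT2}, \cite{GTZ} in the formulation taken from \cite{FHK}, with no proof supplied. Your auxiliary reductions (passing between $\Lambda$ and $\Lambda'$ via the bound $\norm{f}_{U_d(\Z_M)}\le\norm{f}_\infty^{1-2^{1-d}}\left(\E_{n\in\Z_M}|f(n)|\right)^{2^{1-d}}$, commuting the maximum over the finitely many residues $r$ with the inner limit, and deferring the genuine content to the $U_d$-inverse theorem and the von Mangoldt--nilsequence orthogonality estimates) are correct and match what is carried out in \cite{FHK}, which is exactly the source the paper points to.
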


The following uniformity estimate, is an important step towards our result. It will allow us to obtain a similar estimate in our setting, in Theorem~\ref{T:t6}, in order to make use of the conclusion of Theorem~\ref{T:g}.

\begin{lemma}[\mbox{\cite[Lemma 3.5]{FHK}}]\label{L:n1}
Let $\ell, m\in \N,$ $(X,\mathcal{X},\mu,T_1,\ldots, T_\ell)$ be a system, $q_{i,j}\in\Z[t]$ polynomials, $1\leq i\leq \ell,$ $1\leq j\leq m,$  $f_1,\ldots,f_m\in L^{\infty}(\mu)$  and $a:\N\to\C$ be a sequence satisfying $a(n)/n^c\to 0$ for every $c>0.$ Then there exists $d\in\N,$ depending only on the maximum degree of the polynomials $q_{i,j}$ and the integers $\ell,m$ and a constant $C_d$ depending on $d,$ such that 
$$
\norm{\frac{1}{N}\sum_{n=1}^N a(n)\cdot  (\prod_{i=1}^\ell T_i^{q_{i,1}(n)})f_1\cdot\ldots\cdot (\prod_{i=1}^\ell T_i^{q_{i,m}(n)})f_m}_{L^2(\mu)}\leq C_{d} \left( \norm{a\cdot {\bf 1}_{[1,N]}}_{U_d(\Z_{dN})}+o_N(1)\right). $$
Furthermore, the constant $C_d$ is independent of the sequence $(a(n))$ and the $o_N(1)$ term depends only on the integer $d$ and on the sequence $(a(n)).$ 
\end{lemma}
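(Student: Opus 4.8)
The plan is to establish the estimate by the classical \emph{van der Corput plus PET induction} method (as in Bergelson--Leibman and Frantzikinakis--Kra), carrying the weight $a$ explicitly through every step. Put $$v_n=(\prod_{i=1}^\ell T_i^{q_{i,1}(n)})f_1\cdot\ldots\cdot(\prod_{i=1}^\ell T_i^{q_{i,m}(n)})f_m\in L^2(\mu),$$ so that $\norm{v_n}_{L^2(\mu)}\le 1$ for all $n$, and write $A_N=\frac1N\sum_{n=1}^N a(n)v_n$. First I would apply a finitary, Hilbert-space-valued weighted van der Corput inequality to $A_N$ with a window length $H=H(N)\to\infty$ growing slowly enough: this produces $$\norm{A_N}_{L^2(\mu)}^2\le C\,\E_{0\le h<H}\Big|\frac1N\sum_{n=1}^N a(n+h)\overline{a(n)}\,\langle v_{n+h},v_n\rangle_{L^2(\mu)}\Big|+o_N(1)$$ for an absolute constant $C$, with error term of size $O\big(H(N)\max_{n\le 2N}|a(n)|^2/N\big)$; this is $o_N(1)$ precisely because the hypothesis $a(n)/n^c\to 0$ for every $c>0$ forces $\max_{n\le 2N}|a(n)|=O_\varepsilon(N^\varepsilon)$ for every $\varepsilon>0$. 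The inner product $\langle v_{n+h},v_n\rangle_{L^2(\mu)}$ is the integral over $X$ of a product of $2m$ functions whose $T_i$-iterates are the polynomials $q_{i,j}(n+h)$ and $q_{i,j}(n)$; composing inside this integral with the inverse of one of these iterates (which, being measure-preserving, leaves the integral unchanged) rewrites it as a polynomial average governed by a \emph{derived}, still commuting, polynomial family of strictly smaller PET complexity, one of whose factors is now constant in $n$, while the weight has become $a(n+h)\overline{a(n)}$. Re-lifting from an integral to an $L^2(\mu)$ norm by Cauchy--Schwarz (using $\norm{\one}_{L^2(\mu)}=1$) returns an object of exactly the same shape as $A_N$, now with one extra averaging variable and a simpler polynomial family.

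Iterating this step a bounded number of times --- the number of iterations, and with it the integer $d$, being determined solely by the maximum degree of the $q_{i,j}$ and by $\ell$ and $m$, since these bound the PET complexity of every family that can occur --- the polynomial iterates get eliminated one after another, and after the final step the integral over $X$ no longer depends on $n$: it is a fixed quantity, bounded by $1$ in absolute value, depending only on the accumulated shifts $\vec h=(h_1,\ldots,h_{d-1})$. Collecting the $h$-averages with repeated applications of the Cauchy--Schwarz inequality, what is left is an estimate $$\norm{A_N}_{L^2(\mu)}^{2^{d-1}}\le C_d\,\E_{h_1,\ldots,h_{d-1}}\Big|\frac1N\sum_{n=1}^N\prod_{\epsilon\in\{0,1\}^{d-1}}\mathcal{C}^{|\epsilon|}a\big(n+\epsilon\cdot\vec h\big)\Big|+o_N(1),$$ where $\epsilon\cdot\vec h=\epsilon_1h_1+\cdots+\epsilon_{d-1}h_{d-1}$, $\mathcal{C}$ denotes complex conjugation, and all shifted arguments are understood to be truncated to $[1,N]$. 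Extending $a$ by zero outside $[1,N]$ and carrying out these averages inside $\Z_{dN}$ --- legitimate up to an $o_N(1)$ (again by the subpolynomial bound on $a$, with the factor $d$ providing enough room so that the shifts do not wrap around) --- one further application of the Cauchy--Schwarz inequality, introducing the extra difference variable exactly as in the inductive definition of the Gowers norms, bounds the right-hand side by $C_d\norm{a\cdot\one_{[1,N]}}_{U_d(\Z_{dN})}^{2^{d-1}}+o_N(1)$. Taking $2^{d-1}$-th roots gives the lemma; and since every step above introduced only constants and errors depending on $d$ alone (hence on the degrees and on $\ell,m$) and on $a$, the constant $C_d$ is independent of the system and of the $f_i$, and the $o_N(1)$ depends only on $d$ and on $a$, as claimed.

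The main difficulty is the bookkeeping in the PET induction in the presence of the weight. One must verify that each van der Corput step sends a weighted polynomial average to another of exactly the same shape --- a weighted polynomial average, with one more averaging variable and a lower-complexity commuting polynomial family --- that the weight evolves precisely into the multiplicative-derivative pattern $\prod_{\epsilon}\mathcal{C}^{|\epsilon|}a(n+\epsilon\cdot\vec h)$ that defines the Gowers inner product, and --- the delicate point --- that when the induction terminates the non-weight factor is genuinely \emph{eliminated}, that is, becomes independent of $n$ and bounded by $1$, not merely simplified; here the commutativity of the $T_i$ and the termination properties of the PET scheme are essential. The growth hypothesis on $a$ is likewise not a technicality: it is exactly what makes every van der Corput error term, and the replacement of interval averages by averages over the cyclic group $\Z_{dN}$, an $o_N(1)$ depending only on $a$ and $d$; for a weight growing genuinely polynomially no such control by a Gowers norm on a cyclic group of comparable size is available.
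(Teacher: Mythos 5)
Your proposal is the same argument the paper relies on: the paper does not reprove this lemma but cites \cite[Lemma 3.5]{FHK} and notes in the accompanying remark that the proof proceeds by successive applications of the van der Corput estimate (Lemma~\ref{L:v}) together with PET induction, with $d$ counting the number of reduction steps --- exactly the weighted van der Corput/PET scheme you describe, including the role of the subpolynomial growth of $a$ in controlling the error terms and the emergence of the $U_d(\Z_{dN})$ norm from the multiplicative derivatives of the weight. Your outline is correct and consistent with the cited proof.
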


In order to prove this result, the authors in \cite{FHK}, successively made use of the following variant of van der Corput's estimate: 

\begin{lemma}\label{L:v}
Let $N\in \N$ and $v(1),\ldots,v(N)$ be elements in an inner product space with $\norm{v(i)}\leq 1.$ Then there exists a constant $C$ such that
$$
\norm{\frac{1}{N}\sum_{n=1}^N v(n)}^2\leq C\left( \frac{1}{N^2}\sum_{n=1}^N \norm{v(n)}^2 + \frac{1}{N}\sum_{h=1}^N\Big|\frac{1}{N}\sum_{n=1}^{N-h}\langle v(n+h),v(n) \rangle \Big|\right).
$$
\end{lemma}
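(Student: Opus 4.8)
The plan is to deduce the estimate from a single application of the Cauchy--Schwarz inequality, via the classical ``averaging over a window of length $N$'' device. First I would extend the family $(v(n))$ to all of $\Z$ by declaring $v(n)=0$ for $n\notin\{1,\ldots,N\}$. Then for each $n\in\Z$ the block sum $\sum_{h=0}^{N-1}v(n+h)$ makes sense, and summing it over $n\in\Z$ counts each vector $v(m)$ exactly $N$ times, so that
\[
N\sum_{n=1}^{N}v(n)=\sum_{n\in\Z}\sum_{h=0}^{N-1}v(n+h).
\]
The point of the extension is that $\sum_{h=0}^{N-1}v(n+h)$ vanishes unless $n\in\{2-N,\ldots,N\}$, a set of $2N-1$ integers; hence the outer sum has at most $2N$ nonzero terms, and the Cauchy--Schwarz inequality gives
\[
\norm{N\sum_{n=1}^{N}v(n)}^2\le 2N\sum_{n\in\Z}\norm{\sum_{h=0}^{N-1}v(n+h)}^2 .
\]

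Next I would expand the inner square as $\norm{\sum_{h=0}^{N-1}v(n+h)}^2=\sum_{h,h'=0}^{N-1}\langle v(n+h),v(n+h')\rangle$ and sum over $n\in\Z$. For fixed $h,h'$ the quantity $\sum_{n\in\Z}\langle v(n+h),v(n+h')\rangle$ depends only on $k:=h-h'$, and each value $k$ with $|k|<N$ arises from exactly $N-|k|$ pairs $(h,h')$. Isolating the diagonal contribution $k=0$, which equals $N\sum_{n=1}^{N}\norm{v(n)}^2$, grouping the terms indexed by $k$ and $-k$ (their correlation sums are complex conjugates of one another, so their combined contribution equals twice the real part of a single such term and is therefore at most twice its modulus), and using that $v$ is supported on $\{1,\ldots,N\}$ to restrict each surviving correlation sum to $n\in\{1,\ldots,N-k\}$, I arrive at
\[
\norm{N\sum_{n=1}^{N}v(n)}^2\le 2N\left(N\sum_{n=1}^{N}\norm{v(n)}^2+2\sum_{k=1}^{N-1}(N-k)\,\Big|\sum_{n=1}^{N-k}\langle v(n+k),v(n)\rangle\Big|\right).
\]
Dividing by $N^4$, bounding $N-k\le N$ in the second sum, adjoining the (empty, hence vanishing) term $k=N$, and absorbing the numerical factors into one constant $C$ (the value $C=4$ works) yields precisely the asserted inequality.

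The argument is entirely elementary and uses only the inner-product structure; in fact the hypothesis $\norm{v(i)}\le1$ is not needed here, although it is harmless and is what is available in the applications. The only point that requires care is the bookkeeping in the window trick: one must check that after extending $v$ by zero the outer sum really has only $O(N)$ nonzero terms, so that Cauchy--Schwarz costs a single factor $N$ rather than $N^2$, and one must correctly count the multiplicity $N-|k|$ of each difference and track the index ranges of the surviving correlation sums. Beyond this there is no genuine obstacle.
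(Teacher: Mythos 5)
Your proof is correct: the window-averaging trick, the multiplicity count $N-|k|$, the conjugate pairing of $S_k$ and $S_{-k}$, and the final bookkeeping giving $C=4$ all check out, and you are right that $\norm{v(i)}\le 1$ is never used. The paper states this lemma without proof (it is the standard van der Corput estimate, cited from the literature), and your argument is precisely the classical proof one would supply, so there is nothing to contrast.
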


\begin{remark*}
 In order to prove Lemma~\ref{L:n1}, we successively use Lemma~\ref{L:v}, using the van der Corput operation, choosing every time appropriate polynomials in order to have reduction in the complexity. Actually, the number $d$ that Lemma~\ref{L:n1} provides, corresponds to the $d-1$ steps we need to do in order the polynomials to be reduced into constant ones, by using the Polynomial Exhaustion Technique (PET) induction, introduced in \cite{Be87a}.
 
  For more information and details on the van der Corput operation and the scheme of the PET induction we are using here, we refer the reader to \cite{FHK}.
\end{remark*}



\section{Main estimates}

In this section we prove some key estimates needed in the proof of Theorems~\ref{T:e}, ~\ref{T:t3}, ~\ref{T:wg} and ~\ref{P:2}. The main idea is to use a transference principle that allows to deduce results for  integer part (and hence for closest integer) polynomials with real coefficients from respective results for polynomials with integer coefficients. 

\medskip

We first recall the definition of a measure preserving flow.

\begin{definition*}
Let $r\in \mathbb{N}$ and $(X,\mathcal{X}, \mu)$ be a probability space. 
We call a family $(T_t)_{t\in \mathbb{R}^r}$ of measure preserving 
transformations $T_t\colon X\to X,$ a {\em measure preserving flow}, if 
it satisfies $$T_{s+t}=T_s\circ T_t$$ for all $s,t\in \R^r.$
\end{definition*}

Also, we recall the notion of the {\em lower} and  {\em upper density} for a subset $S\subseteq \N$ to be the number $\underline{d}(S)$ and  $\overline{d}(S)$ respectively, where $$\underline{d}(S)=\liminf_{N\to\infty}\frac{|S\cap [1,N]|}{N}\;\;\;\text{and}\;\;\;\overline{d}(S)=\limsup_{N\to\infty}\frac{|S\cap [1,N]|}{N}.$$
If we have $\overline{d}(S)=\underline{d}(S),$ we call the common value {\em density} of $S.$

\medskip

The following important remarks contain essential information that we will use in various proofs along this article.

\begin{remarks*}
i) Let $\ell, m\in \N$ and $q_{i,j}\in\R[t]$ be real valued polynomials for $1\leq i\leq \ell,$ $1\leq j\leq m.$ Then,
for any $\R^{\ell m}$ measure preserving flow $\displaystyle \prod_{i=1}^{\ell}T_{i,s_{i,1}}\cdot \ldots\cdot \prod_{i=1}^\ell T_{i,s_{i,m}},$ where the transformations $T_{i,s_{i,j}}$ are defined in the probability space $(X,\mathcal{X},\mu),$ $s_{i,j}\in \R$ and functions $f_1,\ldots,$ $ f_m\in L^\infty(\mu),$  the sequence of functions $$\tilde{b}(n)= (\prod_{i=1}^\ell T_{i,q_{i,1}(n)})f_1\cdot \ldots\cdot (\prod_{i=1}^\ell T_{i,q_{i,m}(n)})f_m$$ satisfies the conclusion of Lemma~\ref{L:n1}, with $\tilde{b}(n)$ in place of the sequence $$(\prod_{i=1}^\ell T_{i}^{q_{i,1}(n)})f_1\cdot \ldots\cdot (\prod_{i=1}^\ell T_{i}^{q_{i,m}(n)})f_m.$$

Indeed, if $q(t)=a_r t^r+\ldots+a_1 t+a_0\in\R[t],$ we write $T_{q(t)}=(T_{a_r})^{n^r}\cdot\ldots\cdot(T_{a_1})^n\cdot T_{a_0}$ and we use Lemma~\ref{L:n1} for the invertible commuting measure preserving transformations $S_1=T_{a_1},\ldots,S_r=T_{a_r}.$ 

\medskip

\noindent ii) Real valued polynomials, $q\in\R[t],$ satisfy the condition:
$$\displaystyle \lim_{\delta\to 0^+} \overline{d}\left(\Big\{n\in\N\colon \{q(n)\}\in [1-\delta,1)\Big\}\right)=0,$$  where $\{\cdot\}$ denotes the fractional part.
  
\medskip  
  
  Indeed, let $q(t)=a_r t^r+\ldots+a_1 t+a_0\in \R[t].$

If $a_i\notin \Q$ for some $1\leq i\leq r,$ then we have the condition from Weyl's result, since $(q(n))$ is uniformly distributed (mod 1).

If $a_i\in \mathbb{Q}$ for all $1\leq i\leq r,$ then the sequence $(q(n))$ is periodic (mod 1) and the
conclusion is obvious.

\medskip

\noindent iii) If $f$ is Riemann-integrable on $[0,1)$ with $\int_{[0,1)}f(x)\,dx=c$,  then  for every $\varepsilon>0$ we can find trigonometric polynomials $q_1,\;q_2,$ with no constant terms, in order to have the relation $$q_1(t)+c-\varepsilon\leq f(t)\leq q_2(t)+c+\varepsilon.$$
\end{remarks*}





The following result is proved via a transference principle that enables one to deduce results for $\Z$-actions from results for flows (see \cite{K} in comparison with \cite{F}).  This technique was first used in \cite{EL} by E.~Lesigne, in order to prove that when a sequence of real positive numbers is good for the single term pointwise ergodic theorem, then the respective sequence of its integer parts is also good 
(see also \cite{BJW}). This method was later used by M.~Wierdl (in \cite{Mate})  to deal with multiple term averages (see Theorem~3.2 in \cite{K}).


\begin{theorem}\label{T:t6}
Let $\ell, m\in \N,$ $(X,\mathcal{X},\mu,T_1,\ldots, T_\ell)$ be a system, $q_{i,j}\in\R[t]$ polynomials, $1\leq i\leq \ell,$ $1\leq j\leq m$ and  $f_1,\ldots,f_\ell\in L^{\infty}(\mu).$

For the sequence of functions $$b(n)= (\prod_{i=1}^\ell T_i^{[q_{i,1}(n)]})f_1\cdot\ldots\cdot(\prod_{i=1}^\ell T_i^{[q_{i,m}(n)]})f_m$$ there exists $d\in \N,$ depending only on the maximum degree of the polynomials $q_{i,j}$ and the integers $\ell$ and $m,$ such that for every $0<\delta<1$ there exists a constant $C_{d,\delta}$ depending on $d$ and $\delta,$ such that 
\begin{eqnarray*}
\norm{\frac{1}{N}\sum_{n=1}^N(\Lambda'_{w,r}(n)-1)b(n)}_{L^2(\mu)}& \leq & C_{d,\delta}\left(\norm{(\Lambda'_{w,r}-1)\cdot {\bf 1}_{[1,N]}}_{U_d(\Z_{dN})}+o_N(1)\right) \\ 
& +& c_{\delta}\left(1+o_{N\to\infty;w}(1)\right),
\end{eqnarray*}
for all $r\in \N,$ where  $c_\delta\to 0$ as $\delta\to 0^+$ and the term $o_N(1)$ depends on the integer $d.$
\end{theorem}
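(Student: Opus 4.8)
The plan is to reduce the estimate for the integer-part iterates $b(n)$ to the flow-based estimate of Remark (i) following the strategy of \cite{K}. First I would realize each $\Z$-action $T_i$ as the time-one map of an $\R$-flow: embed the system $(X,\mathcal{X},\mu,T_1,\ldots,T_\ell)$ into a suspension $(\wt X,\wt{\mathcal X},\wt\mu,(T_{1,s})_{s\in\R},\ldots,(T_{\ell,s})_{s\in\R})$ where $T_{i,1}=T_i$ (acting on the $X$-coordinate) and the fibre $[0,1)$ is translated by the clock variable, so that functions $f_i\in L^\infty(\mu)$ lift to $\wt f_i\in L^\infty(\wt\mu)$ with the same norm and with averages of $T_i^{[q_{i,j}(n)]}f_i$ comparable to averages of $\wt T_{i,q_{i,j}(n)}\wt f_i$ — up to an error coming from the ``fractional part crossing'' a neighbourhood of $1$. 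Concretely, for each $n$ one writes $[q_{i,j}(n)] = q_{i,j}(n) - \{q_{i,j}(n)\}$, and $\wt T_{i,q_{i,j}(n)}$ composed with the suspension structure agrees with $T_i^{[q_{i,j}(n)]}$ on the part of the fibre where the clock coordinate lies outside $[1-\delta,1)$ (roughly speaking), at the cost of at most $\delta$ in measure per index.

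The key steps, in order, would be: (1) Set up the suspension and the lift, and prove the pointwise/$L^2$ comparison $\norm{b(n) - \wt b(n)|_{\text{good part}}}$ is controlled by $\one_{B_\delta(n)}$ where $B_\delta(n)$ is the union over $i,j$ of the bad sets $\{\{q_{i,j}(n)\}\in[1-\delta,1)\}$ pulled back to the fibre; this produces the $c_\delta(1+o_{N\to\infty;w}(1))$ term once we average $\Lambda'_{w,r}(n)-1$ against $\one_{B_\delta(n)}$ and invoke Remark (ii) (real polynomials spend vanishing density near $1 \bmod 1$) together with the fact that $\frac1N\sum_{n\le N}\Lambda'_{w,r}(n)$ is bounded uniformly in $w$ (via $\phi(W)/W \cdot$ the prime number theorem in progressions — this is where the $o_{N\to\infty;w}(1)$ enters). (2) For the ``good part'' of the average, apply Remark (i): the sequence $\wt b(n)=(\prod_i \wt T_{i,q_{i,1}(n)})\wt f_1\cdots(\prod_i\wt T_{i,q_{i,m}(n)})\wt f_m$ satisfies the conclusion of Lemma~\ref{L:n1} with the weight $a(n)=\Lambda'_{w,r}(n)-1$, which indeed satisfies $a(n)/n^c\to 0$ for all $c>0$; the degree $d$ produced depends only on $\max\deg q_{i,j}$, $\ell$, $m$, exactly as claimed. (3) Combine: the main term gives $C_{d,\delta}\norm{(\Lambda'_{w,r}-1)\one_{[1,N]}}_{U_d(\Z_{dN})} + o_N(1)$ (the constant picking up a $\delta$-dependence only through how finely we cut the fibre into $\approx 1/\delta$ pieces, since the suspension realisation of each good fibre-piece is a genuine $\Z$-action to which Lemma~\ref{L:n1} applies with a uniform constant $C_d$, and we sum $O(1/\delta)$ of them), and the bad term gives $c_\delta(1+o_{N\to\infty;w}(1))$.

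I expect the main obstacle to be the bookkeeping in step (1): making precise how $T_i^{[q_{i,j}(n)]}$ on $X$ is modelled by $\wt T_{i,q_{i,j}(n)}$ on the suspension when several transformations $T_1,\ldots,T_\ell$ act simultaneously in each product $\prod_{i=1}^\ell T_i^{[q_{i,j}(n)]}$ — one must suspend in all $\ell$ directions simultaneously (a $\Z^\ell \hookrightarrow \R^\ell$ suspension) and track that the fibre coordinate for direction $i$ sees the fractional part $\{q_{i,j}(n)\}$ \emph{of that coordinate's polynomial}, so the bad set $B_\delta(n)$ is genuinely a union of $\ell m$ events, each of vanishing upper density as $\delta\to0^+$ by Remark (ii). A secondary subtlety is ensuring the constant $C_{d,\delta}$ genuinely depends only on $d$ and $\delta$ (not on $w$, $r$, $N$, or the system): this is inherited from the uniformity clause in Lemma~\ref{L:n1} ($C_d$ independent of the weight sequence) applied to each of the $O(1/\delta)$ fibre-pieces. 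Once these are in place the estimate follows by the triangle inequality in $L^2(\wt\mu)=L^2(\mu)\otimes L^2[0,1)$.
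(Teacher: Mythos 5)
Your overall architecture matches the paper's: you build the $\R^{\ell m}$ suspension on $X\times[0,1)^{\ell m}$, transfer the main part of the average to the flow and apply Lemma~\ref{L:n1} via Remark~(i) (so the degree $d$ and the shape of the main term come out correctly), and isolate an error supported on the set of $n$ for which some $\{q_{i,j}(n)\}$ lands in $[1-\delta,1)$.

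The genuine gap is in your treatment of that error term. You propose to bound
\begin{equation*}
\frac{1}{N}\sum_{n=1}^{N}\bigl|\Lambda'_{w,r}(n)-1\bigr|\cdot\one_{B_\delta}(n),\qquad B_\delta=\bigcup_{i,j}\Big\{n:\{q_{i,j}(n)\}\in[1-\delta,1)\Big\},
\end{equation*}
by combining Remark~(ii) (the set $B_\delta$ has small upper density) with the fact that $\frac1N\sum_{n\le N}\Lambda'_{w,r}(n)$ is bounded uniformly in $w$. These two facts together do not yield a bound of size $c_\delta$: the total $\Lambda'_{w,r}$-mass on $[1,N]$ is of order $N$, and nothing in your argument prevents that mass from concentrating on the sparse set $B_\delta$ (the weight $\Lambda'_{w,r}$ is not pointwise bounded, so you cannot multiply the density of $B_\delta$ by a sup bound). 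What is actually needed --- and what the paper proves --- is the weighted equidistribution statement that $\frac1N\sum_{n\le N}\Lambda'_{w,r}(n)\,\one_{[1-\delta,1)}(\{q_{i,j}(n)\})$ tends to something small as $N\to\infty$, then $w\to\infty$, then $\delta\to 0^+$. The paper obtains this by approximating $\one_{[1-\delta,1)}$ by trigonometric polynomials (Remark~(iii)), writing $\Lambda'_{w,r}=(\Lambda'_{w,r}-1)+1$, and invoking the Green--Tao orthogonality of $\Lambda'_{w,r}-1$ to the nilsequence $e^{2\pi i k q_{i,j}(n)}$ for the first piece (this, not the prime number theorem in progressions, is the true source of the $o_{N\to\infty;w}(1)$) and Weyl's theorem for the second, with the all-rational-coefficients case handled separately by periodicity. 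Without this ingredient your bad term is only $O(1)$ rather than $c_\delta\left(1+o_{N\to\infty;w}(1)\right)$, and the asserted estimate does not follow. (A minor further point: the paper's $\delta$-dependence of $C_{d,\delta}$ arises from restricting the fibre variable to the corner $[0,\delta]^{\ell m}$ and dividing by $\delta^{\ell m}$, rather than from summing over $O(1/\delta)$ fibre pieces; that difference is cosmetic.)
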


\begin{remark*}
Due to unpleasant error terms, it seems difficult to adapt the PET induction used in the proof of Lemma~\ref{L:n1} in order to prove the asserted estimate.
\end{remark*}

\begin{proof}[Proof of Theorem~\ref{T:t6}]
Let $0<\delta<1$ and $w, r\in \N.$
For the given transformations on $X,$ we define the $\R^{\ell m}$ action $\displaystyle \prod_{i=1}^{\ell}T_{i,s_{i,1}}\cdot \ldots\cdot \prod_{i=1}^\ell T_{i,s_{i,m}}$  on the probability space $Y=X\times [0,1)^{\ell m},$ endowed with the measure $\nu=\mu\times \lambda^{\ell m}$ ($\lambda$ is the Lebesgue measure on $[0,1)$), by
$$\prod_{j=1}^m \prod_{i=1}^\ell T_{i,s_{i,j}}(x,a_{1,1},\ldots,a_{\ell,1},a_{1,2},\ldots,a_{\ell,2},\ldots,a_{1,m},\ldots,a_{\ell,m})=$$
$$\left(\prod_{j=1}^m \prod_{i=1}^\ell T_i^{[s_{i,j}+a_{i,j}]}x,\{s_{1,1}+a_{1,1}\},\ldots,\{s_{\ell,1}+a_{\ell,1}\},\ldots,\{s_{1,m}+a_{1,m}\},\ldots,\{s_{\ell,m}+a_{\ell,m}\}\right). $$
Since the transformations $T_1,\ldots, T_\ell$ are measure preserving and commute, and also since $[x+\{y\}]+[y]=[x+y],$ it is easy to check that the above action defines a measure preserving flow on the product probability space $Y$.

 If $f_1,\ldots,f_m$ are bounded functions on $X,$ for every element
 
 $(a_{1,1},\ldots,a_{\ell,1},a_{1,2},\ldots,a_{\ell,2},\ldots,a_{1,m},\ldots,a_{\ell,m})\in [0,1)^{\ell m}$   we define the $Y$-extensions of $f_j,$ setting:  $$\hat{f}_j(x,a_{1,1},\ldots,a_{\ell,1},a_{1,2},\ldots,a_{\ell,2},\ldots,a_{1,m},\ldots,a_{\ell,m})=f_j(x),\;\;1\leq j\leq m;$$ and $$\hat{f}_0(x,a_{1,1},\ldots,a_{\ell,1},a_{1,2},\ldots,a_{\ell,m})= 1_{[0,\delta]^{\ell m}}(a_{1,1},\ldots,a_{\ell,1},a_{1,2},\ldots,a_{\ell,m}).$$ If $$\tilde{b}(n)= \hat{f}_0\cdot(\prod_{i=1}^\ell T_{i,q_{i,1}(n)})\hat{f}_1\cdot \ldots\cdot (\prod_{i=1}^\ell T_{i,q_{i,m}(n)})\hat{f}_m,$$  for every $x\in X$ we define $$b'(n)(x)=\int_{[0,1)^{\ell m}}\tilde{b}(n)(x,a_{1,1},\ldots,a_{\ell,1},a_{1,2},\ldots,a_{\ell,m})\,d\lambda^{\ell m},$$ where the integration is with respect to the variables $a_{i,j}.$

Then, by using the triangle and the Cauchy-Schwarz inequality, for the sequence $a(n)=\Lambda'_{w,r}(n)-1,$ we have that 
$$\delta^{\ell m}\norm{\frac{1}{N}\sum_{n=1}^N a(n)b(n)}_{L^2(\mu)}\leq \norm{\frac{1}{N}\sum_{n=1}^N a(n)\cdot (\delta^{\ell m}b(n)-b'(n))}_{L^2(\mu)}+\norm{\frac{1}{N}\sum_{n=1}^N a(n)\tilde{b}(n)}_{L^2(\nu)}.$$
From Part~i) of the previous remark, we can use Lemma~\ref{L:n1} to find an integer $d\in \N,$ depending only on the maximum degree of the polynomials $q_{i,j}$ and the integers $\ell,m$ and a constant $C_d$ depending on $d,$  such that
$$\norm{\frac{1}{N}\sum_{n=1}^N a(n)\tilde{b}(n)}_{L^2(\nu)}\leq C_d\left(\norm{a\cdot {\bf 1}_{[1,N]}}_{U_d(\Z_{dN})}+o_N(1)\right),$$
where the $o_N(1)$ term depends only on the integer $d$ and the sequence $(a(n)).$

Next we will study the term $\norm{\frac{1}{N}\sum_{n=1}^N a(n)\cdot (\delta^{\ell m}b(n)-b'(n))}_{L^2(\mu)}.$

For every $x\in X$ we have
$$\Big|\delta^{\ell m} b(n)(x)-b'(n)(x)\Big|= \Big|\int_{[0,\delta]^{\ell m}}  \left(\prod_{j=1}^m f_j(\prod_{i=1}^\ell T_i^{[q_{i,j}(n)]}x)-\prod_{j=1}^m f_j(\prod_{i=1}^\ell T_i^{[q_{i,j}(n)+a_{i,j}]}x) \right)\, d \lambda^{\ell m}\Big|. $$
Since all the relevant $a_{i,j}$ in the integrand are less or equal than $\delta,$ if the fractional part of all $q_{i,j}(n)$ is less than $1-\delta,$ we have $T_i^{[q_{i,j}(n)+a_{i,j}]}=T_i^{[q_{i,j}(n)]}$ for all $1\leq i\leq \ell,$ $1\leq j\leq m.$ We will deal with the case where the fractional part of some $q_{i,j}(n)$ is greater or equal than $1-\delta.$ 

For every $1\leq i\leq \ell,$ $1\leq j\leq m,$ let
$$E_{\delta}^{i,j}:=\{n\in \N\colon \{q_{i,j}(n)\}\in [1-\delta,1)\}.$$
Then, by using the fact that ${\bf 1}_{E_\delta^{1,1}\cup \ldots\cup E_\delta^{1,m}\cup E_\delta^{2,1}\cup\ldots\cup E_\delta^{\ell,m}}\leq \sum_{(i,j)\in[1,\ell]\times[1,m]} {\bf 1}_{E_{\delta}^{i,j}} $ and that ${\bf 1}_{E_{\delta}^{i,j}}(n)={\bf 1}_{[1-\delta,1)}(\{q_{i,j}(n)\}),$ for $1\leq i\leq \ell,$ $1\leq j\leq m,$ $n\in\N,$  for every $x\in X$ we have
$$\Big|\delta^{\ell m}b(n)(x)-b'(n)(x)\Big|\leq 2\delta^{\ell m}\cdot \sum_{(i,j)\in[1,\ell]\times[1,m]} \frac{1}{N}\sum_{n=1}^N  {\bf 1}_{[1-\delta,1)}(\{q_{i,j}(n)\}) ,$$ so, recalling that $a(n)=\Lambda_{w,r}'(n)-1,$
$$ \frac{1}{N}\sum_{n=1}^N |a(n)|\cdot {\bf 1}_{[1-\delta,1)}(\{q_{i,j}(n)\}) \leq  \frac{1}{N}\sum_{n=1}^N \Lambda'_{w,r}(n)\cdot{\bf 1}_{[1-\delta,1)}(\{q_{i,j}(n)\})+\frac{|E_{\delta}^{i,j}\cap [1,N]|}{N}.$$
From Part ii) of the previous remark, we have that for small enough $\delta,$ the term (and the sum of finitely many terms of this form)  $\frac{|E_{\delta}^{i,j}\cap [1,N]|}{N}$ is as small as we want.

As for the term $\frac{1}{N}\sum_{n=1}^N \Lambda'_{w,r}(n)\cdot{\bf 1}_{[1-\delta,1)}(\{q_{i,j}(n)\}),$ we will show that it goes to zero when $N\to\infty,$ then $w\to\infty$ and finally $\delta\to 0^+.$

 If the polynomial $q_{i,j}(n)$ has only (except maybe the constant term) rational coefficients, for small $\delta,$ the sum will go to zero from periodicity, and so, we can assume that the polynomial has at least one irrational coefficient (except the constant term). From Part iii) of the previous remark (for the function $f={\bf 1}_{[1-\delta,1)}$), it suffices to show that $\frac{1}{N}\sum_{n=1}^N \Lambda'_{w,r}(n)e^{2\pi i k q_{i,j}(n)}\to 0$ as $N\to\infty$ and then $w\to\infty$ for all $k\in \Z\setminus\{0\}.$
 
We write 
$$\frac{1}{N}\sum_{n=1}^N \Lambda'_{w,r}(n)e^{2\pi i k q_{i,j}(n)}=\frac{1}{N}\sum_{n=1}^N (\Lambda'_{w,r}(n)-1)e^{2\pi i k q_{i,j}(n)}+\frac{1}{N}\sum_{n=1}^N e^{2\pi i k q_{i,j}(n)}.$$ The first term goes to zero  as $N\to\infty$ and then $w\to\infty,$ from \cite{GT},  since $(e^{2\pi i k q_{i,j}(n)})_n$ is a nilsequence, while the second term goes to zero as $N\to\infty$ from Weyl's equidistribution theorem. The result now follows.
\end{proof}

Using first Theorem~\ref{T:t6} and then Theorem~\ref{T:g}, we will now prove a result, in Proposition~\ref{P:1} below, which will give us a comparison between averages over primes (via the modified von Mangoldt function) and averages over integers. This result, together with a uniform multiple recurrence result, that we prove in Corollary~\ref{C:n2} below, reflect the main arguments for proving  Theorem~\ref{T:e}. The proof of Theorem~\ref{T:e} will actually make use of the Proposition~\ref{P:1} for the respective closest integer polynomial iterates. Corollary~\ref{C:n2} will mainly follow by a uniform multiple recurrence result for polynomial iterates, which follows from the polynomial Szemer{\'e}di theorem (from \cite{BL2}) in the same way as Theorem~2.1~(ii) and (iii) is proved in \cite{BHMP}, in order to obtain the respective uniform multiple recurrence result for closest integer polynomial iterates, in Proposition~\ref{P:102}, the proof of which is using a method presented in \cite{BHa} and used in \cite{WS} as well.

\begin{proposition}\label{P:1}
Let $\ell, m\in \N,$ $(X,\mathcal{X},\mu,T_1,\ldots, T_\ell)$ be a system, $q_{i,j}\in\R[t]$ polynomials, $1\leq i\leq \ell,$ $1\leq j\leq m$ and  $f_1,\ldots,f_\ell\in L^{\infty}(\mu).$

Then,
$$
 \max_{1\leq r\leq W,\;(r,W)=1}\norm{\frac{1}{N}\sum_{n=1}^N(\Lambda_{w,r}'(n)-1)\cdot (\prod_{i=1}^\ell T_i^{[q_{i,1}(Wn+r)]})f_1\cdot\ldots\cdot(\prod_{i=1}^\ell T_i^{[q_{i,m}(Wn+r)]})f_m}_{L^2(\mu)}
$$
converges to $0$ as $N\to\infty$ and then $w\to\infty.$
\end{proposition}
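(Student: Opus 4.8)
The plan is to apply Theorem~\ref{T:t6} to the shifted polynomials $t\mapsto q_{i,j}(Wt+r)$, then use the Gowers uniformity estimate Theorem~\ref{T:g} to dispose of the main term, and finally let $N\to\infty$, $w\to\infty$, and an auxiliary parameter $\delta\to 0^+$, in that order. First I would fix $w$ and a residue $r$ with $(r,W)=1$ and note that $p_{i,j}(t):=q_{i,j}(Wt+r)$ is again a real polynomial of the same degree as $q_{i,j}$ (here $W\geq 2$, so the leading coefficient only gets multiplied by the nonzero factor $W^{\deg q_{i,j}}$). Hence the sequence
$$b_r(n):=\Bigl(\prod_{i=1}^\ell T_i^{[q_{i,1}(Wn+r)]}\Bigr)f_1\cdot\ldots\cdot\Bigl(\prod_{i=1}^\ell T_i^{[q_{i,m}(Wn+r)]}\Bigr)f_m$$
is precisely of the form to which Theorem~\ref{T:t6} applies, with the $p_{i,j}$ in the role of the $q_{i,j}$; moreover the integer $d$ it produces depends only on $D:=\max_{i,j}\deg q_{i,j}$, $\ell$ and $m$, and in particular not on $r$ or $w$. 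Feeding in the weight $a(n)=\Lambda'_{w,r}(n)-1$, Theorem~\ref{T:t6} yields, for every $0<\delta<1$, a constant $C_{d,\delta}$ with
\begin{align*}
\norm{\frac1N\sum_{n=1}^N(\Lambda'_{w,r}(n)-1)\,b_r(n)}_{L^2(\mu)}
&\leq C_{d,\delta}\Bigl(\norm{(\Lambda'_{w,r}-1)\cdot{\bf 1}_{[1,N]}}_{U_d(\Z_{dN})}+o_N(1)\Bigr)\\
&\quad+c_\delta\bigl(1+o_{N\to\infty;w}(1)\bigr),
\end{align*}
where $c_\delta\to 0$ as $\delta\to 0^+$.

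Next I would take $\max_{1\leq r\leq W,\,(r,W)=1}$ of both sides, which is legitimate because $d$, $C_{d,\delta}$ and $c_\delta$ are $r$-free and the error terms in Theorem~\ref{T:t6} may be taken uniform in $r$. Letting $N\to\infty$ and applying Theorem~\ref{T:g}, the quantity $\max_r\norm{(\Lambda'_{w,r}-1)\cdot{\bf 1}_{[1,N]}}_{U_d(\Z_{dN})}$ converges to a number $\eta(w)$ with $\eta(w)\to 0$ as $w\to\infty$, the $o_N(1)$ vanishes, and $o_{N\to\infty;w}(1)$ collapses to some $o_w(1)$, so that
$$\limsup_{N\to\infty}\ \max_{\substack{1\leq r\leq W\\(r,W)=1}}\norm{\frac1N\sum_{n=1}^N(\Lambda'_{w,r}(n)-1)\,b_r(n)}_{L^2(\mu)}\leq C_{d,\delta}\,\eta(w)+c_\delta\bigl(1+o_w(1)\bigr).$$
Letting $w\to\infty$ (with $\delta$ fixed) sends the right-hand side to $c_\delta$; since the left-hand side is independent of $\delta$, letting $\delta\to 0^+$ forces this iterated $\limsup$ to be $0$, and nonnegativity of the expressions then gives genuine convergence of $\lim_{w\to\infty}\lim_{N\to\infty}$ to $0$, which is the assertion.

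The one step I expect to require genuine care --- everything else being a substitution, two quotations, and a chase through three nested limits --- is the uniformity in $r$ of the $o_N(1)$ term, which enters through Lemma~\ref{L:n1}: its statement allows that term to depend on the weight sequence, so one must check that the iterated van der Corput / PET estimate depends on $\Lambda'_{w,r}-1$ only through data that is uniform over all $r$ with $(r,W)=1$ (for instance the growth bound $a(n)=o(n^c)$ for each $c>0$), equivalently that the $o_N(1)$ appearing in Theorem~\ref{T:t6} is genuinely a function of $d$ alone as asserted there.
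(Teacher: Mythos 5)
Your proposal is correct and follows essentially the same route as the paper: apply Theorem~\ref{T:t6} to the polynomials $q_{i,j}(Wn+r)$ with weight $\Lambda'_{w,r}-1$, take the maximum over $r$ (legitimate since $d$, $C_{d,\delta}$, $c_\delta$ and the error terms are uniform in $r$ as asserted in Theorem~\ref{T:t6}), then invoke Theorem~\ref{T:g} and send $N\to\infty$, $w\to\infty$, $\delta\to 0^+$ in that order. The uniformity-in-$r$ point you flag is indeed the only delicate issue, and it is settled by the statement of Theorem~\ref{T:t6}, which asserts the bound for all $r\in\N$ with an $o_N(1)$ term depending only on $d$.
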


\begin{proof}
Using Theorem~\ref{T:t6}, for the polynomials $q_{i,j}(Wn+r),$ we get that for every $0<\delta<1,$ there exists $d\in\N,$ depending only on the maximum degree of the polynomials $q_{i,j}$ and the integers $\ell$ and $m,$ and a constant $C_{d,\delta}$ depending on $d$ and $\delta,$ such that
$$
 \max_{1\leq r\leq W,\;(r,W)=1}\norm{\frac{1}{N}\sum_{n=1}^N(\Lambda_{w,r}'(n)-1)\cdot (\prod_{i=1}^\ell T_i^{[q_{i,1}(Wn+r)]})f_1\cdot\ldots\cdot(\prod_{i=1}^\ell T_i^{[q_{i,m}(Wn+r)]})f_m}_{L^2(\mu)}
$$
$$
\leq C_{d,\delta}\left(\max_{1\leq r\leq W,\;(r,W)=1}\norm{(\Lambda'_{w,r}-1)\cdot {\bf 1}_{[1,N]}}_{U_d(\Z_{dN})}+o_N(1)\right)+c_{\delta}\left(1+o_{N\to\infty;w}(1)\right),$$
where $c_\delta\to 0$ as $\delta\to 0^+.$ Taking first $N\to\infty$ and then $w\to\infty$ in this expression, by Theorem~\ref{T:g}, we have that the required limit is bounded above by $c_\delta.$ Taking $\delta\to 0^+,$ we get the result.
\end{proof}

We will make use of the following uniform multiple recurrence result; it follows from the polynomial Szemer{\'e}di (\cite{BL2}) in the same way as Theorem~2.1~(ii) is proved in \cite{BHMP}: 

\begin{theorem}[\mbox{\cite{BHMP}}]\label{C:n1}
Let $\ell, m\in \N$ and $(X,\mathcal{X},\mu,(T_{i,j})_{1\leq i\leq \ell, 1\leq j\leq m})$ be a system. Then for any $A\in \mathcal{X}$ with $\mu(A)>0,$ there exists a positive constant $c\equiv c_{\ell,m,\mu(A)}>0$  such that   
\begin{equation}\label{E:n1e}
\liminf_{N\to\infty}\frac{1}{N}\sum_{n=1}^N\mu\left( A\cap (\prod_{i=1}^\ell T_{i,1}^{-n^i})A\cap\ldots\cap (\prod_{i=1}^\ell T_{i,m}^{-n^i})A\right)\geq c.
\end{equation}
\end{theorem}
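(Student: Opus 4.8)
The statement is explicitly attributed to \cite{BHMP}, so the task is really to transcribe the argument of \cite[Theorem~2.1~(ii)]{BHMP} into the present setting of $\ell m$ commuting transformations $(T_{i,j})$. The engine is the polynomial Szemer\'edi theorem of Bergelson and Leibman \cite{BL2}, and the extra content is the passage from a \emph{single-$n$} recurrence statement to the \emph{uniform lower-density} statement in \eqref{E:n1e}. I would proceed in three steps.

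First, I would record the qualitative recurrence input. Applying the polynomial Szemer\'edi theorem in its ergodic (IP- or plain) form to the commuting family $T_{1,1},\dots,T_{\ell,m}$ and the polynomials $t\mapsto t^i$ (appearing in the $i$-th slot of the $j$-th product), one gets: for every system and every $A$ with $\mu(A)>0$, there is $n\in\N$ with $\mu\bigl(A\cap\bigcap_{j=1}^m(\prod_{i=1}^\ell T_{i,j}^{-n^i})A\bigr)>0$. In fact the polynomial Szemer\'edi theorem gives the stronger syndetic statement, and — this is the key upgrade — a \emph{uniform} positivity: there is $c=c_{\ell,m,\mu(A)}>0$ (depending only on the indicated data, not on the system) so that the set of good $n$ has lower density $\ge c$ \emph{after averaging}, i.e.\ the $\liminf$ of the Ces\`aro averages of $\mu(A\cap\bigcap_j(\prod_i T_{i,j}^{-n^i})A)$ is $\ge c$. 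The cleanest way to see the uniformity in $c$ is via the correspondence between this averaged recurrence quantity and upper Banach densities of polynomial configurations in subsets of $\Z$ of density $\ge\mu(A)$; the quantitative polynomial Szemer\'edi theorem then supplies a bound on the density of configurations that depends only on $\ell,m$ and $\mu(A)$.

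Second, I would make the reduction to a density statement in $\Z$ precise, which is how \cite{BHMP} phrases it. One uses the standard averaging/compactness trick: if the conclusion failed, there would be a sequence of systems $(X_k,\mathcal X_k,\mu_k,(T_{i,j}^{(k)}))$ and sets $A_k$ with $\mu_k(A_k)\ge\delta>0$ along which the $\liminf$ of the averages tends to $0$; passing to an off-diagonal / reverse-Furstenberg description produces a set $E\subseteq\Z$ with $d^\ast(E)\ge\delta$ for which the density of the polynomial pattern $\{x,\ x+n,\ \dots\}$ (with the $T_{i,j}^{-n^i}$ translated into shifts of $E$) is $0$, contradicting the polynomial Szemer\'edi theorem. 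Equivalently, and more directly, one invokes the measurable version of the uniform polynomial Szemer\'edi theorem already isolated in \cite{BHMP} and simply notes that $\ell m$ commuting transformations with $m$ blocks of monomial iterates $n^1,\dots,n^\ell$ is a special case of its hypotheses.

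The main obstacle is purely the bookkeeping of indices: one must check that the family of polynomial maps $n\mapsto \bigl(\prod_{i=1}^\ell T_{i,j}^{-n^i}\bigr)$ for $j=1,\dots,m$, together with the commutativity of all $\ell m$ transformations, fits the framework of \cite[Theorem~2.1]{BHMP} verbatim — in particular that no cross-terms among distinct blocks arise and that the monomials $n,n^2,\dots,n^\ell$ are exactly the ones handled there. Once the setup is matched, the proof is a citation; the only genuine point requiring a word of justification is that the constant $c$ depends on $\ell,m,\mu(A)$ alone, which, as above, comes from the quantitative (density-version) polynomial Szemer\'edi theorem applied uniformly over all sets $E\subseteq\Z^{\ell m}$ (or $\Z$, after the correspondence) of density at least $\mu(A)$.
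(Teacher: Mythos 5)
Your proposal is correct and follows essentially the same route as the paper, which offers no proof of its own for this statement beyond the remark that it ``follows from the polynomial Szemer\'edi theorem (\cite{BL2}) in the same way as Theorem~2.1~(ii) is proved in \cite{BHMP}'' --- i.e.\ exactly the citation-plus-index-bookkeeping you describe. The one phrase to drop is the appeal to a ``quantitative polynomial Szemer\'edi theorem'' (no such effective version is available or needed); the uniformity of $c$ in the system comes from the soft compactness/reverse-correspondence argument you already give in your second step, which is precisely the mechanism of \cite{BHMP}.
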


\begin{remark*}
In fact, it is known that the limit in \eqref{E:n1e} exists from \cite{W12}.
\end{remark*}





\begin{lemma}\label{L:101}
 Let $\ell, m\in \N.$ For every $\delta>0$ there exists a constant $c\equiv c_{\ell,m,\delta}>0$ such that:
$$\underline{d}\left(\Big\{n\in\N:\;\norm{a_{i,j}n^i}<\delta\;\;\forall\;1\leq i\leq \ell,\;1\leq j\leq m\Big\}\right)\geq c,$$ for all $a_{i,j}\in \R,$ $1\leq i\leq \ell,$ $1\leq j\leq m.$
\end{lemma}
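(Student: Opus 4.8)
The plan is to reduce the statement to the torus $\T^{\ell m}$ and apply a compactness/pigeonhole argument for the irrational part, together with a periodicity argument for the rational part. First I would separate, for each pair $(i,j)$, the coefficient $a_{i,j}$ into the contribution coming from its rational and irrational parts by working directly on the orbit closure of the point $(a_{i,j})_{i,j}$ under the map $n\mapsto (a_{i,j}n^i)_{i,j}$ in $\T^{\ell m}$. By Weyl's theorem this orbit closure is a connected subgroup $H$ of $\T^{\ell m}$ (a sub-torus translated, but since the polynomials vanish at $n=0$ it actually contains $0$), and the sequence $n\mapsto (a_{i,j}n^i)_{i,j}$ is well-distributed in $H$ with respect to its Haar measure $m_H$. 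The set $U=\{y\in H:\ \norm{y_{i,j}}<\delta\ \forall i,j\}$ is an open neighbourhood of $0$ in $H$, hence $m_H(U)>0$; call it $c_1=c_1(\ell,m,\delta,(a_{i,j}))>0$. Well-distribution then gives
$$
\underline{d}\left(\{n\in\N:\ (a_{i,j}n^i)_{i,j}\in U\}\right)=\lim_{N\to\infty}\frac1N\sum_{n=1}^N \mathbf{1}_U\big((a_{i,j}n^i)_{i,j}\big)=m_H(U)>0.
$$

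The catch is that $c_1$ as produced this way depends on the particular coefficients $(a_{i,j})$ through the size of $m_H(U)$, whereas the lemma demands a \emph{uniform} constant $c_{\ell,m,\delta}$ valid for \emph{all} choices of $a_{i,j}$. So the main obstacle — and the step I would spend the most care on — is upgrading the pointwise-in-$(a_{i,j})$ positivity to a uniform lower bound. I would do this by a compactness argument on the parameter space. Observe that the quantity
$$
F\big((a_{i,j})\big):=\underline{d}\left(\{n\in\N:\ \norm{a_{i,j}n^i}<\delta\ \forall i,j\}\right)
$$
is invariant under replacing each $a_{i,j}$ by $a_{i,j}+$ (any rational with bounded denominator) in a way I can control, and is periodic enough that it suffices to bound $F$ below on a compact set. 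Concretely, for a given $\delta$ one first handles the purely rational regime: if every $a_{i,j}$ is rational with denominator at most some fixed $Q$, then $(a_{i,j}n^i)_{i,j}$ is periodic in $n$ with period dividing $\mathrm{lcm}$ of those denominators, and $n$ in a fixed residue class (e.g.\ $n\equiv 0$) forces every $\norm{a_{i,j}n^i}=0<\delta$, giving density at least $1/Q^{\ell m}$ or so. The general case is then obtained by approximating each $a_{i,j}$ by a rational with small denominator (Dirichlet): by Dirichlet's theorem there is $q\le Q$ with $\norm{q a_{i,j}}$ small for all $i,j$ simultaneously (after fixing $Q=Q(\ell m,\delta)$ large), and then restricting $n$ to multiples of $q$ — writing $n=qn'$ — replaces $a_{i,j}n^i$ by $(a_{i,j}q^i)n'^i$ where now $a_{i,j}q^i$ is within a controlled distance of an integer times something, allowing an inductive descent on the "irrationality scale".

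An alternative, and perhaps cleaner, route to the same uniform bound is to invoke Theorem~\ref{C:n1} directly: take $X=\T^{\ell m}$ with Lebesgue measure, let $T_{i,j}$ be the rotation by $a_{i,j}$ on the $(i,j)$-th coordinate (and identity elsewhere), and let $A=\{y:\ \norm{y_{i,j}}<\delta/2\ \forall i,j\}$, so $\mu(A)=\delta^{\ell m}$ depends only on $\delta$. Theorem~\ref{C:n1} then yields a constant $c=c_{\ell,m,\mu(A)}=c_{\ell,m,\delta}>0$, \emph{independent of the $a_{i,j}$}, with
$$
\liminf_{N\to\infty}\frac1N\sum_{n=1}^N \mu\Big(A\cap \big(\textstyle\prod_{i} T_{i,1}^{-n^i}\big)A\cap\cdots\cap\big(\prod_i T_{i,m}^{-n^i}\big)A\Big)\ge c.
$$
Evaluating the integrand: $y\in A\cap\bigcap_j(\prod_i T_{i,j}^{-n^i})A$ means $\norm{y_{i,j}}<\delta/2$ and $\norm{y_{i,j}+a_{i,j}n^i}<\delta/2$ for all $i,j$; for the measure of this set to be positive one needs each $\norm{a_{i,j}n^i}<\delta$ (triangle inequality), and conversely when $\norm{a_{i,j}n^i}<\delta$ that set has measure at least $(\delta/2-\norm{a_{i,j}n^i}/1)^{\ell m}$-ish — in any case the \emph{average over $n$} being at least $c$ forces the set of $n$ with $\norm{a_{i,j}n^i}<\delta\ \forall i,j$ to have lower density at least $c/\mu(A)=c'_{\ell,m,\delta}$, since the integrand is trivially bounded by $\mu(A)$. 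This second approach makes the uniformity manifest and I would adopt it, using the first (Weyl/Dirichlet) discussion only as the conceptual backbone; the one point requiring a line of care is the quantitative comparison "integrand positive $\Rightarrow$ $\norm{a_{i,j}n^i}<\delta$" versus "integrand $\ge$ something $\Rightarrow$ averaging yields density", which is a routine sandwiching of indicator functions by the measure of shifted boxes on $\T^{\ell m}$.
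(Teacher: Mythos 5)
Your adopted (second) argument is essentially the paper's own proof: both apply the uniform multiple recurrence result of Theorem~\ref{C:n1} to a rotation system built from the $a_{i,j}$ with a small box as the set $A$, and then deduce the density bound from the fact that nonemptiness of the multiple intersection forces $\norm{a_{i,j}n^i}<\delta$ while the integrand is bounded by $\mu(A)$; the only difference is that the paper works on the single circle $\T$ with $A=[0,\delta)$, whereas you work on $\T^{\ell m}$ with a product box, which if anything makes the effect of the compositions $\prod_{i=1}^{\ell}T_{i,j}^{-n^i}$ more transparent. Your first (Weyl/Dirichlet) sketch is not needed, and you correctly discard it because of the uniformity issue you identify.
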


\begin{proof}
Let $0<\delta\leq 1/2.$ For the system $(\T,\mathcal{B}(\T),m,(T_{i,j})_{1\leq i\leq \ell, 1\leq j\leq m}),$ where $$T_{i,j}x=x+a_{i,j}(mod 1),\;\;x\in\T,\;\;1\leq i\leq \ell,\;1\leq j\leq m$$ and the set $A=[0,\delta),$ use Theorem~\ref{C:n1} to find a constant  $c\equiv c_{\ell,m,\delta}>0$ so that
$$\liminf_{N\to\infty}\frac{1}{N}\sum_{n=1}^N m\left(A\cap(\prod_{i=1}^\ell T_{i,1}^{-n^i})A\cap\ldots\cap (\prod_{i=1}^\ell T_{i,m}^{-n^i})A \right)\geq c.$$
To obtain the result, note that if $x\in A\cap(\prod_{i=1}^\ell T_{i,1}^{-n^i})A\cap\ldots\cap (\prod_{i=1}^\ell T_{i,m}^{-n^i})A,$ then
 \begin{equation}\label{E:101}
x+a_{i,j}n^i (mod 1)\in [0,\delta),\;\;\forall\;1\leq i\leq \ell,\;1\leq j\leq m,
\end{equation}
 and since $x\in [0,\delta)$ itself,  \eqref{E:101} gives us that $$\{a_{i,j}n^i\}\in [0,\delta-x)\cup [1-x,1),\;\;\forall\;1\leq i\leq \ell,\;1\leq j\leq m,$$ from which we have that $$\norm{a_{i,j}n^i}<\delta\;\;\forall\;1\leq i\leq \ell,\;1\leq j\leq m,$$ hence the result.
\end{proof}

\begin{remark*}
The proof of Lemma~\ref{L:101} can also follow by a single recurrence argument as well.
\end{remark*}

We will also need the following lemma.

\begin{lemma}\label{L:102}
Let $q(n)=a_1 n+\ldots+a_k n^k \in \R[n],$ be a real valued polynomial with no constant term and for any $r\in \N$ let $$ S_r:=\Big\{m\in \N:\;\norm{a_i m^i}<\frac{1}{2k r^k},\;1\leq i\leq k\Big\}.$$ Then, for any $m\in S_r$ and $1\leq n\leq r$ we have $$[[q(mn)]]=[[a_1 m]]n+\ldots+[[a_k m^k]]n^k.$$
\end{lemma}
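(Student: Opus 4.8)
The plan is to prove the identity by expanding $q(mn)$ term by term and controlling the errors produced when we replace $[[a_i (mn)^i]]$ by $[[a_i m^i]] n^i$. Fix $m \in S_r$ and $1 \le n \le r$. Write $a_i m^i = [[a_i m^i]] + \theta_i$, where $\theta_i = a_i m^i - [[a_i m^i]]$ satisfies $|\theta_i| = \norm{a_i m^i} < \tfrac{1}{2k r^k}$ by the definition of $S_r$. Then
\[
q(mn) = \sum_{i=1}^k a_i m^i n^i = \sum_{i=1}^k \bigl([[a_i m^i]] + \theta_i\bigr) n^i = \underbrace{\sum_{i=1}^k [[a_i m^i]]\, n^i}_{\in\, \Z} + \sum_{i=1}^k \theta_i n^i .
\]
The first sum is an integer, call it $M$. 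For the error sum, since $1 \le n \le r$ we have $n^i \le r^i \le r^k$ for each $1 \le i \le k$, hence
\[
\Bigl|\sum_{i=1}^k \theta_i n^i\Bigr| \le \sum_{i=1}^k |\theta_i|\, n^i < k \cdot \frac{1}{2k r^k} \cdot r^k = \frac12 .
\]

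Now I use the elementary fact that for an integer $M$ and a real number $t$ one has $[[M + t]] = M + [[t]]$, and that $[[t]] = 0$ whenever $|t| < \tfrac12$ (indeed $[[t]] = [t + 1/2]$, and $t + 1/2 \in (0,1)$ when $|t| < 1/2$). Applying this with $t = \sum_i \theta_i n^i$, whose absolute value is strictly less than $1/2$ by the above, gives
\[
[[q(mn)]] = \Bigl[\Bigl[\, M + \sum_{i=1}^k \theta_i n^i \Bigr]\Bigr] = M = \sum_{i=1}^k [[a_i m^i]]\, n^i = [[a_1 m]] n + \ldots + [[a_k m^k]] n^k ,
\]
which is exactly the claimed identity.

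There is essentially no deep obstacle here; the only point requiring a little care is the bookkeeping that makes the total error strictly below $1/2$: one needs the bound $\norm{a_i m^i} < \tfrac{1}{2k r^k}$ for \emph{every} $i$ simultaneously (which is precisely how $S_r$ is defined) together with the uniform bound $n^i \le r^k$ for $1 \le n \le r$ and $1 \le i \le k$, so that summing $k$ terms keeps us under $\tfrac12$. The strictness of the inequality (rather than $\le$) matters because $[[t]]$ can jump at $|t| = 1/2$, but the definition of $S_r$ supplies a strict inequality, so this is not an issue. I would present the argument in the three displayed steps above with a one-line justification of $[[M+t]] = M$ and of $[[t]]=0$ for $|t|<1/2$.
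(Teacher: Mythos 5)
Your proof is correct and follows essentially the same route as the paper: the paper's one-line argument rests on the fact that $[[x_1+\ldots+x_k]]=[[x_1]]+\ldots+[[x_k]]$ whenever $\norm{x_1}+\ldots+\norm{x_k}<1/2$, which is exactly the "total error below $1/2$" bookkeeping you carry out explicitly via the decomposition $a_im^i=[[a_im^i]]+\theta_i$. Your version is just a more explicit write-up of the same idea, and the bound $\sum_i|\theta_i|n^i<k\cdot\frac{1}{2kr^k}\cdot r^k=\frac12$ is precisely the point of the definition of $S_r$.
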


\begin{proof}
If $\norm{x_1}+\ldots+\norm{x_k}<1/2,$ we can easily check that $[[x_1+\ldots+x_k]]=[[x_1]]+\ldots+[[x_k]],$  obtaining the conclusion of the lemma.
\end{proof}

We will also use the following uniform multiple recurrence result which follows from the polynomial Szemer{\'e}di (\cite{BL2}) in the same way as Theorem~2.1~(iii) is proved in \cite{BHMP}: 

\begin{theorem}[\mbox{\cite{BHMP}}]\label{C:n11}
Let $\ell, m\in \N$ and $(X,\mathcal{X},\mu,(T_{i,j})_{1\leq i\leq \ell, 1\leq j\leq m})$ be a system. Then for any $A\in \mathcal{X}$ with $\mu(A)>0,$ there exist a constant $c\equiv c_{\ell,m,\mu(A)}>0$ and an integer $N_0\equiv N_0(\ell,m,\mu(A))\in\N$  such that for some $1\leq n\leq N_0$ we have  
\begin{equation}\label{E:n1e1}
\mu\left( A\cap (\prod_{i=1}^\ell T_{i,1}^{-n^i})A\cap\ldots\cap (\prod_{i=1}^\ell T_{i,m}^{-n^i})A\right)\geq c.
\end{equation}
\end{theorem}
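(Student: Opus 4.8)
The statement to prove is Theorem~\ref{C:n11}, which is stated as following from the polynomial Szemer\'edi theorem of \cite{BL2} ``in the same way as Theorem~2.1~(iii) is proved in \cite{BHMP}''. So the proof should be a compactness/uniformity argument run on top of the multiple recurrence statement \eqref{E:n1e} of Theorem~\ref{C:n1} (or directly on the polynomial Szemer\'edi theorem).

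\textbf{Approach.} The plan is to deduce the finitary statement \eqref{E:n1e1} from the averaged lower bound \eqref{E:n1e} of Theorem~\ref{C:n1} by a simple argument: once we know that
$\liminf_{N\to\infty}\frac1N\sum_{n=1}^N \mu(A\cap(\prod_i T_{i,1}^{-n^i})A\cap\ldots\cap(\prod_i T_{i,m}^{-n^i})A)\geq c$,
there must, for every large $N$, be at least one $n$ in $[1,N]$ for which the individual term exceeds, say, $c/2$; but this $n$ could a priori drift off to infinity with $N$, so one needs to upgrade this to a \emph{single} bound $N_0$ working for \emph{all} systems simultaneously with the given parameters $\ell,m,\mu(A)$. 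The clean way to get the uniform $N_0$ is a soft compactness argument: suppose no such $N_0$ exists. Then for each $k\in\N$ there is a system $(X_k,\mathcal X_k,\mu_k,(T_{i,j}^{(k)}))$ and a set $A_k$ with $\mu_k(A_k)>0$ (one can reduce to the case $\mu_k(A_k)=\alpha$ fixed by a thickening/normalization, or just quantify over $\mu(A)\geq\alpha$) for which the term is $<c$ for \emph{every} $1\leq n\leq k$, where $c=c_{\ell,m,\alpha}$ is the constant from Theorem~\ref{C:n1}. First I would encode each such system as a point in a compact space: pass to the associated sequence of ``moment'' data, i.e. the array of correlations $\mu_k(B_0\cap \prod_i (T_{i,j_1}^{(k)})^{-s_1}B_1\cap\ldots)$ over all finite words, which lives in a product of closed intervals; extract (Tychonoff/diagonal) a convergent subsequence; and realize the limiting array of numbers as the correlation data of an honest limit system $(X_\infty,\mathcal X_\infty,\mu_\infty,(T_{i,j}^{(\infty)}))$ with a set $A_\infty$ of measure $\geq\alpha$ (this is the standard ``joining/inverse limit of finitary data'' construction, or one may instead invoke the correspondence principle the other direction). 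In the limit system the inequality $\mu_\infty(A_\infty\cap\prod_i(T_{i,1}^{(\infty)})^{-n^i}A_\infty\cap\ldots)\leq c$ holds for \emph{all} $n\in\N$, contradicting Theorem~\ref{C:n1} applied to the limit system (which forces the liminf of the averages, hence infinitely many individual terms, to be $\geq c>c$... wait, $\geq c$, so one should run the argument with $c/2$ in the ``for every $n$'' hypothesis, so the limiting bound is $\leq c/2 < c$, a genuine contradiction). Hence a uniform $N_0$ exists; relabel the constant in \eqref{E:n1e1} to be $c/2$ (or simply call it $c$ again since it still depends only on $\ell,m,\mu(A)$) and we are done.

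\textbf{Alternative shortcut.} Actually the cleanest route avoids building limit systems at all: Theorem~\ref{C:n1} already gives, for a \emph{fixed} system and set, that $\mu(A\cap\prod_i T_{i,1}^{-n^i}A\cap\ldots)\geq c$ for a set of $n$ of positive lower density, in particular for some $n$; the only thing missing is the uniformity of the bound $N_0$ across systems. So it suffices to show: for each fixed $\ell,m$ and each threshold $\alpha\in(0,1]$, the map assigning to a system-with-set its first good return time is bounded on the class $\{\mu(A)\geq\alpha\}$. This is exactly the compactness argument above; there is no way around some such soft argument because $N_0$ genuinely must be uniform.

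\textbf{Main obstacle.} The only real content is the compactness step: making precise the space of ``systems with a distinguished set of measure $\geq\alpha$'' on which one can extract limits and in which the relevant correlation functionals are continuous. The honest way is to argue at the level of the (separable) algebra generated by $A$ under the $T_{i,j}$'s, record all finite correlations, take a pointwise limit of these numerical arrays, and invoke the standard construction (a Gelfand/GNS-type reconstruction, or equivalently realize the limiting correlations as those of a measure-theoretic system via the Furstenberg correspondence run on a suitably chosen $E\subseteq\Z^{\ell m}$) to produce a genuine limit system on which Theorem~\ref{C:n1} applies. Since \cite{BHMP} carries this out essentially verbatim for the monomial iterates $n^i$ — which is precisely the form appearing in \eqref{E:n1e} and \eqref{E:n1e1} — I would simply reproduce their argument with the present notation, citing \cite{BHMP} for the details of the compactness reduction and \cite{BL2} (equivalently Theorem~\ref{C:n1} above) for the underlying recurrence.
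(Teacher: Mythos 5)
The paper does not actually prove Theorem~\ref{C:n11}: it is imported verbatim from \cite{BHMP} with the remark that it follows from the polynomial Szemer\'edi theorem of \cite{BL2} in the same way as Theorem~2.1~(iii) is proved there, so there is no in-paper argument to compare against. Your reconstruction --- negate the uniform statement, extract for each $k$ a counterexample system valid up to time $k$, pass to a weak-* limit of the pushforward measures on $\{0,1\}^{\Z^{\ell m}}$ (equivalently, of the correlation arrays), and contradict Theorem~\ref{C:n1} in the limit system --- is the standard and correct way to obtain this kind of uniformity, and it is the same soft compactness mechanism used in \cite{BHMP}. One detail to nail down: if you only require $\mu_k(A_k)\geq\alpha$, then the limit set may have measure $\beta>\alpha$, and the bound ``all correlations $\leq c_{\ell,m,\alpha}/2$'' does not contradict Theorem~\ref{C:n1} unless you know the constant $c_{\ell,m,\cdot}$ is monotone in the measure, which the statement of Theorem~\ref{C:n1} does not give for free; either normalize to $\mu_k(A_k)=\alpha$ exactly (as you suggest parenthetically), or run the diagonal with thresholds $1/k\to 0$ so that every correlation vanishes in the limit and the contradiction is unconditional.
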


Using Theorem~\ref{C:n11}, we get the following:

\begin{proposition}\label{P:1011}
Let $\ell, m\in \N,$ $(X,\mathcal{X},\mu,T_1,\ldots,T_\ell)$ be a system and $q_{i,j}\in \Z[t]$ polynomials with maximum degree $d$ and $q_{i,j}(0)=0$ for any $1\leq i\leq \ell, 1\leq j\leq m.$ Then, for any $A\in \mathcal{X}$ with $\mu(A)>0$ there exist a constant $c\equiv c_{d,m,\mu(A)}>0$ and an integer $N_0\equiv N_0(\ell,m,\mu(A))\in\N$  such that for some $1\leq n\leq N_0$ we have
\begin{equation}\label{E:nlo1}
\mu\left(A\cap(\prod_{i=1}^\ell T_{i}^{-q_{i,1}(n)})A\cap\ldots\cap (\prod_{i=1}^\ell T_{i}^{-q_{i,m}(n)})A \right)\geq c.
\end{equation}
\end{proposition}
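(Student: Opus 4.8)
The plan is to reduce Proposition~\ref{P:1011} to Theorem~\ref{C:n11} by converting the polynomial iterates $q_{i,j}(n)$ into the pure monomial iterates $n^i$ that Theorem~\ref{C:n11} handles, via a change of variables. Writing $q_{i,j}(t)=\sum_{s=1}^{d}c_{i,j,s}t^s$ with integer coefficients $c_{i,j,s}$ (note $q_{i,j}(0)=0$, so there is no constant term), the key observation is that for a fixed integer $n$ one has $q_{i,j}(n)=\sum_{s=1}^{d}c_{i,j,s}n^s$, and the exponent $\prod_{i=1}^{\ell}T_i^{-q_{i,j}(n)}$ factors as a product over $s$ of $\big(\prod_{i=1}^{\ell}T_i^{-c_{i,j,s}}\big)^{n^s}$. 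Thus if we introduce, for each pair $(j,s)$ with $1\le j\le m$, $1\le s\le d$, the commuting measure preserving transformation $S_{s,j}:=\prod_{i=1}^{\ell}T_i^{c_{i,j,s}}$ (these commute since the $T_i$ do and each is a finite product of powers of the $T_i$), then
$$
\prod_{i=1}^{\ell}T_i^{-q_{i,j}(n)}=\prod_{s=1}^{d}S_{s,j}^{-n^s}.
$$
Hence the left-hand side of \eqref{E:nlo1} equals
$$
\mu\Big(A\cap\big(\prod_{s=1}^{d}S_{s,1}^{-n^s}\big)A\cap\ldots\cap\big(\prod_{s=1}^{d}S_{s,m}^{-n^s}\big)A\Big),
$$
which is precisely the quantity appearing in \eqref{E:n1e1} for the system $(X,\mathcal{X},\mu,(S_{s,j})_{1\le s\le d,\,1\le j\le m})$, with the roles of $\ell$ and the index $i$ in Theorem~\ref{C:n11} played by $d$ and $s$ respectively.

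Carrying this out: first fix the polynomials and extract the integer coefficient arrays $(c_{i,j,s})$; second, define the transformations $S_{s,j}$ and check they are invertible, measure preserving, and mutually commuting (immediate from the corresponding properties of $T_1,\dots,T_\ell$ and the fact that commuting transformations generate an abelian group); third, verify the exponent identity above by expanding $q_{i,j}(n)$ and regrouping, using that the $T_i$ commute so the order of composition is irrelevant; fourth, apply Theorem~\ref{C:n11} to the system $(X,\mathcal{X},\mu,(S_{s,j}))$ with the set $A$, obtaining $c\equiv c_{d,m,\mu(A)}>0$ and $N_0\equiv N_0(d,m,\mu(A))$ so that \eqref{E:n1e1} holds for some $1\le n\le N_0$; fifth, translate back through the identity to conclude \eqref{E:nlo1}. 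Note the constant $c$ produced depends on $d$ (the number of "monomial directions") rather than $\ell$, matching the statement $c\equiv c_{d,m,\mu(A)}$, while $N_0$ is allowed to depend on $\ell,m,\mu(A)$ (and of course $d$, which is determined by the $q_{i,j}$, so listing $\ell$ is harmless).

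There is essentially no serious obstacle here, since Theorem~\ref{C:n11} is quoted as a black box; the only points requiring a little care are (a) making sure that when some monomial of degree $s$ is genuinely absent from $q_{i,j}$ for every $i$ the corresponding $S_{s,j}$ is simply the identity, which causes no problem for Theorem~\ref{C:n11}; and (b) keeping the indexing straight when matching our double-indexed family $(S_{s,j})$ against the double-indexed family $(T_{i,j})$ in the statement of Theorem~\ref{C:n11} — in particular that the "inner" product in \eqref{E:n1e1} runs over the first index and the "outer" intersection over the second, so we must pair $s\leftrightarrow i$ and $j\leftrightarrow j$. With those bookkeeping matters handled, the proof is a two-line computation plus an invocation of Theorem~\ref{C:n11}.
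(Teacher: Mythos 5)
Your proposal is correct and follows exactly the same route as the paper: the paper's proof also writes $q_{i,j}(n)=a_{1,i,j}n+\ldots+a_{d,i,j}n^d$, sets $S_{k,j}=\prod_{i=1}^\ell T_i^{a_{k,i,j}}$, and invokes Theorem~\ref{C:n11} for the system $(X,\mathcal{X},\mu,(S_{k,j})_{1\leq k\leq d,\,1\leq j\leq m})$. Your additional bookkeeping remarks (absent monomials giving the identity transformation, and the index matching $s\leftrightarrow i$) are correct and merely make explicit what the paper leaves implicit.
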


\begin{proof}
If $q_{i,j}(n)=a_{1,i,j}n+\ldots+a_{d,i,j}n^d$ for any $1\leq i\leq \ell, 1\leq j\leq m$ (we put some zero terms if needed), set
$$S_{k,j}=\prod_{i=1}^\ell T_i^{a_{k,i,j}},\;\;1\leq k\leq d,\;1\leq j\leq m,$$ and use Theorem~\ref{C:n11} for the system $(X,\mathcal{X},\mu,(S_{k,j})_{1\leq k\leq d, 1\leq j\leq m}).$
\end{proof}

\begin{proposition}\label{P:102}
Let $\ell, m\in \N,$ $(X,\mathcal{X},\mu,T_1,\ldots,T_\ell)$ a system and $q_{i,j}\in \R[t]$ with maximum degree $d$ and $q_{i,j}(0)=0$ for any $1\leq i\leq \ell, 1\leq j\leq m.$ Then, for any $A\in \mathcal{X}$ with $\mu(A)>0$ there exists a constant $c\equiv c_{d,\ell,m,\mu(A)}>0$  such that  
\begin{equation}\label{E:nnn}
\liminf_{N\to\infty}\frac{1}{N}\sum_{n=1}^N\mu\left(A\cap(\prod_{i=1}^\ell T_{i}^{-[[q_{i,1}(n)]]})A\cap\ldots\cap (\prod_{i=1}^\ell T_{i}^{-[[q_{i,m}(n)]]})A \right)\geq c.
\end{equation}
\end{proposition}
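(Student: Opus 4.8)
The plan is to reduce to integer polynomials, where the uniform multiple recurrence statement Proposition~\ref{P:1011} applies, and then to upgrade a bounded-range recurrence conclusion to the positive-density statement \eqref{E:nnn} by an elementary counting argument. First I would write each polynomial as $q_{i,j}(n)=\sum_{k=1}^d a_{k,i,j}n^k$ (inserting zero coefficients if needed); there is no constant term since $q_{i,j}(0)=0$. Proposition~\ref{P:1011}, applied to $(X,\mathcal X,\mu,T_1,\ldots,T_\ell)$ and $A$, provides a constant $c_0>0$ and an integer $N_0\in\N$, both depending only on $d,\ell,m$ and $\mu(A)$ — and crucially \emph{not} on the particular polynomials — such that for every family of integer polynomials $P_{i,j}$ with $\deg P_{i,j}\le d$ and $P_{i,j}(0)=0$ there exists $1\le n\le N_0$ with
\begin{equation}\label{E:p102a}
\mu\Big(A\cap(\prod_{i=1}^\ell T_i^{-P_{i,1}(n)})A\cap\ldots\cap(\prod_{i=1}^\ell T_i^{-P_{i,m}(n)})A\Big)\ge c_0 .
\end{equation}

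With $N_0$ fixed, set
$$S:=\Big\{m\in\N:\ \norm{a_{k,i,j}m^k}<\tfrac{1}{2dN_0^{\,d}}\ \text{ for all }\ 1\le k\le d,\ 1\le i\le\ell,\ 1\le j\le m\Big\}.$$
Lemma~\ref{L:101}, applied with $d$ in place of $\ell$, with $\ell m$ in place of $m$, and with $\delta=\tfrac{1}{2dN_0^{\,d}}$, gives a constant $c'>0$ depending only on $d,\ell,m$ and $N_0$ (hence only on $d,\ell,m,\mu(A)$) with $\underline d(S)\ge c'$. For each $m\in S$ and each $1\le n\le N_0$, Lemma~\ref{L:102} (invoked separately for each $q_{i,j}$, with $r=N_0$) yields
$$[[q_{i,j}(mn)]]=\sum_{k=1}^d[[a_{k,i,j}m^k]]\,n^k=:P^{(m)}_{i,j}(n),$$
and each $P^{(m)}_{i,j}\in\Z[n]$ has degree at most $d$ and no constant term. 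Feeding $\{P^{(m)}_{i,j}\}$ into \eqref{E:p102a}, for every $m\in S$ I obtain some $n_m\in\{1,\ldots,N_0\}$ with $f(mn_m)\ge c_0$, where $f(N):=\mu\big(A\cap(\prod_{i=1}^\ell T_i^{-[[q_{i,1}(N)]]})A\cap\ldots\cap(\prod_{i=1}^\ell T_i^{-[[q_{i,m}(N)]]})A\big)$.

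It remains to descend from the auxiliary parameter $m$ to $N$. Put $R:=\{N\in\N:\ f(N)\ge c_0\}$, so $R\supseteq\{mn_m:m\in S\}$. As $1\le n_m\le N_0$, a fixed integer $v$ is of the form $mn_m$ for at most $N_0$ values of $m$ (namely $m=v/t$ over the divisors $t\le N_0$ of $v$), and $mn_m\le N$ whenever $m\le N/N_0$; hence $|R\cap[1,N]|\ge N_0^{-1}\,|S\cap[1,N/N_0]|$. Since $f\ge 0$ and $f\ge c_0$ on $R$,
$$\frac1N\sum_{n=1}^N f(n)\ \ge\ \frac{c_0}{N}\,|R\cap[1,N]|\ \ge\ \frac{c_0}{N_0^{\,2}}\cdot\frac{|S\cap[1,N/N_0]|}{N/N_0},$$
and letting $N\to\infty$, together with $\underline d(S)\ge c'$, establishes \eqref{E:nnn} with $c:=c_0c'/N_0^{\,2}>0$, depending only on $d,\ell,m,\mu(A)$.

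The step I expect to be the main obstacle is this final passage: Proposition~\ref{P:1011} locates only a single good $n\le N_0$ for each auxiliary $m$, so one must recover a set of arguments $N$ of positive lower density while keeping track of how badly the map $m\mapsto mn_m$ fails to be injective. By contrast, linearising the closest-integer iterates in the auxiliary variable (Lemma~\ref{L:102}), ensuring that $S$ is large (Lemma~\ref{L:101}), and exploiting the polynomial-independence of the constants in Proposition~\ref{P:1011} are routine once the scheme is set up.
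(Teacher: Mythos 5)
Your proposal is correct and follows essentially the same route as the paper: the same set $S_r$ (with $r=N_0$ and threshold $\tfrac{1}{2dN_0^d}$) bounded below in density via Lemma~\ref{L:101}, the same linearisation $[[q_{i,j}(mn)]]=\sum_k[[a_{k,i,j}m^k]]n^k$ via Lemma~\ref{L:102}, the same polynomial-uniform application of Proposition~\ref{P:1011}, and the same final constant of the form $\tilde c\,c'/N_0^2$. The only cosmetic difference is that you phrase the last step as an at-most-$N_0$-to-one counting of the map $m\mapsto mn_m$, whereas the paper runs the equivalent double-sum inequality over $s\le[N/N_0]$, $n\le N_0$ directly.
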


\begin{proof} Write $q_{i,j}(n)=a_{1,i,j}n+\ldots+a_{d,i,j}n^d,\;\;1\leq i\leq \ell,\;1\leq j\leq m$ (by putting some zero terms if needed). 
For any $r\in \N,$ let $$S_r=\Big\{s\in\N:\;\norm{a_{k,i,j}s^k}<\frac{1}{2d r^{d}}\;\;\forall\;1\leq k\leq d,\;1\leq i\leq \ell,\;1\leq j\leq m\Big\}.$$

\noindent Use Lemma~\ref{L:101} to bound the lower density of $S_r$  below by some constant $c_{d,\ell,m,r}>0.$

\medskip

For any $s\in S_r,\;1\leq n\leq r,$ using Lemma~\ref{L:102}, we have 

\begin{equation*}
[[q_{i,j}(ns)]]  = [[a_{1,i,j} s]]n+\ldots+[[a_{d,i,j} s^d]]n^d,\;\;1\leq i\leq \ell, \;1\leq j\leq m.
\end{equation*} Set $$p_{s,i,j}(n)=[[a_{1,i,j} s]]n+\ldots+[[a_{d,i,j} s^d]]n^d,\;\;s\in \N,\;1\leq i\leq \ell, \;1\leq j\leq m.$$

\noindent Use Proposition~\ref{P:1011} to find a positive constant $\tilde{c}\equiv\tilde{c}_{d,\mu(A),\ell,m}$ and a positive integer $N_0\equiv N_0(d,\mu(A),\ell,m)$ such that, for some $1\leq n\leq N_0,$ for any $s\in\N,$ we have 
$$\mu\left(A\cap(\prod_{i=1}^\ell T_{i}^{-p_{s,i,1}(n)})A\cap\ldots\cap (\prod_{i=1}^\ell T_{i}^{-p_{s,i,m}(n)})A \right)\geq \tilde{c}. $$
Then, 
$$\liminf_{N\to \infty}\frac{1}{N}\sum_{n=1}^N\mu\left(A\cap(\prod_{i=1}^\ell T_{i}^{-[[q_{i,1}(n)]]})A\cap\ldots\cap (\prod_{i=1}^\ell T_{i}^{-[[q_{i,m}(n)]]})A \right)\geq$$ $$ \frac{1}{N_0}\liminf_{N\to\infty}\frac{1}{N}\sum_{s=1}^{[\frac{N}{N_0}]}\sum_{n=1}^{N_0}\mu\left(A\cap(\prod_{i=1}^\ell T_{i}^{-[[q_{i,1}(ns)]]})A\cap\ldots\cap (\prod_{i=1}^\ell T_{i}^{-[[q_{i,m}(ns)]]})A \right)\geq$$
$$\frac{1}{N_0}\liminf_{N\to\infty}\frac{1}{N}\sum_{s\in S_{N_0}\cap\{1,\ldots,[\frac{N}{N_0}]\}}\sum_{n=1}^{N_0}\mu\left(A\cap(\prod_{i=1}^\ell T_{i}^{-p_{s,i,1}(n)})A\cap\ldots\cap (\prod_{i=1}^\ell T_{i}^{-p_{s,i,m}(n)})A \right)\geq $$
 $$\frac{\tilde{c}_{d,\mu(A),\ell,m}}{N_0^2}\liminf_{N\to\infty}\frac{\Big| S_{N_0}\cap\Big\{1,\ldots,\Big[\frac{N}{N_0}\Big]\Big\} \Big|}{[\frac{N}{N_0}]}\geq \frac{\tilde{c}_{d,\mu(A),\ell,m}\cdot c_{d,N_0,\ell,m}}{N_0^2}>0, $$ and we have the result.
\end{proof}

By using Proposition~\ref{P:102} we immediately obtain the following uniform result, the second ingredient we need 
in order to prove Theorem~\ref{T:e}. The uniformity in $W$ is crucial for the proof of  Theorem~\ref{T:e}.

\begin{corollary}\label{C:n2}
Let $\ell, m \in \N,$  $(X,\mathcal{X},\mu,T_1,\ldots, T_\ell)$ be a system and $q_{i,j}\in \R[t]$ be polynomials with maximum degree $d$ and $q_{i,j}(0)=0$ for $1\leq i\leq \ell,$ $1\leq j\leq m.$ Then for any $A\in \mathcal{X}$ with $\mu(A)>0,$ there exists a constant $c\equiv c_{d,\ell,m,\mu(A)}>0$ such that for every $W\in \N$
\begin{equation}\label{E:n2c}
\liminf_{N\to\infty}\frac{1}{N}\sum_{n=1}^N\mu \left( A\cap (\prod_{i=1}^\ell T_i^{-[[q_{i,1}(Wn)]]})A\cap\ldots\cap (\prod_{i=1}^\ell T_i^{-[[q_{i,m}(Wn)]]})A\right)\geq c.
\end{equation}
\end{corollary}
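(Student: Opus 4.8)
The plan is to reduce Corollary~\ref{C:n2} to Proposition~\ref{P:102} by absorbing the factor $W$ into the polynomials. Given $\ell, m \in \N$, a system $(X,\mathcal{X},\mu,T_1,\ldots,T_\ell)$, polynomials $q_{i,j} \in \R[t]$ of maximum degree $d$ with $q_{i,j}(0)=0$, and $A \in \mathcal{X}$ with $\mu(A)>0$, I would fix $W \in \N$ and introduce the polynomials $\widetilde{q}_{i,j}(t) := q_{i,j}(Wt) \in \R[t]$ for $1 \leq i \leq \ell$, $1 \leq j \leq m$. Each $\widetilde{q}_{i,j}$ still has maximum degree $d$ and still satisfies $\widetilde{q}_{i,j}(0) = q_{i,j}(0) = 0$, so these are admissible inputs to Proposition~\ref{P:102}.

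Applying Proposition~\ref{P:102} to the same system, the same set $A$, and the polynomials $\widetilde{q}_{i,j}$ yields a constant $c \equiv c_{d,\ell,m,\mu(A)} > 0$ with
\begin{equation*}
\liminf_{N\to\infty}\frac{1}{N}\sum_{n=1}^N\mu\left(A\cap(\prod_{i=1}^\ell T_{i}^{-[[\widetilde{q}_{i,1}(n)]]})A\cap\ldots\cap (\prod_{i=1}^\ell T_{i}^{-[[\widetilde{q}_{i,m}(n)]]})A \right)\geq c,
\end{equation*}
which is exactly \eqref{E:n2c} since $[[\widetilde{q}_{i,j}(n)]] = [[q_{i,j}(Wn)]]$. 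The key point, and the only thing one really needs to check, is that the constant $c$ produced by Proposition~\ref{P:102} depends only on $d$, $\ell$, $m$, and $\mu(A)$ — and in particular \emph{not} on the coefficients of the polynomials fed to it. Inspecting the proof of Proposition~\ref{P:102}, this is indeed the case: the constant there is assembled from the Lemma~\ref{L:101} bound (which is uniform over all real coefficients $a_{k,i,j}$ by design) and from the Proposition~\ref{P:1011} / Theorem~\ref{C:n11} constants $\widetilde{c}$ and $N_0$ (which depend only on $d$, $\ell$, $m$, $\mu(A)$). Since replacing $q_{i,j}(t)$ by $q_{i,j}(Wt)$ only changes the coefficients and not the degree, the resulting $c$ is the same for every $W$, giving the asserted uniformity.

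The statement contains no genuine obstacle: the entire content has been front-loaded into Proposition~\ref{P:102} and its ingredients. The one subtlety worth a sentence in the write-up is emphasizing the coefficient-independence of the constant in Proposition~\ref{P:102}; one might note explicitly that in that proof the auxiliary sets $S_r$ and all invoked recurrence constants are controlled purely in terms of $d$, $\ell$, $m$, $r$, and $\mu(A)$, so the bound survives the substitution $t \mapsto Wt$ without deterioration. Thus the proof is a three-line reduction, and the uniformity in $W$ that the corollary highlights is automatic.
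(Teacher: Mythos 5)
Your proposal is correct and matches the paper's own (essentially one-line) argument: the paper likewise obtains Corollary~\ref{C:n2} immediately from Proposition~\ref{P:102} applied to the polynomials $q_{i,j}(Wt)$, with the uniformity in $W$ coming from the fact that the constant there depends only on $d,\ell,m,\mu(A)$ and not on the coefficients. Your explicit check that this coefficient-independence traces back to Lemma~\ref{L:101} and Proposition~\ref{P:1011} is exactly the right point to emphasize.
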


\begin{remark*}
In fact, it is known that the limits in \eqref{E:nnn} and \eqref{E:n2c} exist from \cite{K}.
\end{remark*}

\section{Proof of main results}

In this section we give the proof of Theorems~\ref{T:e},~\ref{T:t3} and \ref{T:wg}. First, we prove Theorem~\ref{T:e} for the $\P-1$ case (the $\P+1$ case follows similarly).

\begin{proof}[Proof of Theorem~\ref{T:e}]
Using Proposition~\ref{P:1} for the polynomials $q_{i,j}(n-1)+\frac{1}{2}$ with $r=1$ and Corollary~\ref{C:n2}, we have (recall that $[[x]]=[x+1/2]),$ for sufficiently large $w\in \N,$ that
$$
\liminf_{N\to\infty}\frac{1}{N}\sum_{n=1}^N \Lambda_{w,1}'(n)\cdot\mu\left(A\cap (\prod_{i=1}^\ell T_i^{-[[q_{i,1}(Wn)]]})A\cap\ldots\cap (\prod_{i=1}^\ell T_i^{-[[q_{i,m}(Wn)]]})A\right)>0,
$$ 
from which we have the required non-empty intersection with $\P-1.$
\end{proof}

\begin{remark*}
According to Lemma~\ref{L:n}, we have that the conclusion of Theorem~\ref{T:e}, and so of Theorem~\ref{T:p}, is satisfied for a set of integers $n$ with positive relative density in the shifted primes $\P-1.$ The analogous result, by a similar argument, holds for the set $\P+1$  as well.
\end{remark*}

\begin{proof}[Proof of Theorem~\ref{T:t3}]
We borrow the arguments from the proof of Theorem~1.3 from \cite{FHK}. By Lemma~\ref{L:n} it suffices to show that the following sequence is Cauchy in $L^2(\mu):$
$$
A(N):=\frac{1}{N}\sum_{n=1}^N \Lambda'(n)\cdot (\prod_{i=1}^\ell T_i^{[q_{i,1}(n)]})f_1\cdot\ldots\cdot (\prod_{i=1}^\ell T_i^{[q_{i,m}(n)]})f_m.
$$
Using Proposition~\ref{P:1}, for any $\varepsilon>0,$ if for $w,r\in\N,$ we define   $$B_{w,r}(N):=\frac{1}{N}\sum_{n=1}^N   (\prod_{i=1}^\ell T_i^{[q_{i,1}(Wn+r)]})f_1\cdot\ldots\cdot (\prod_{i=1}^\ell T_i^{[q_{i,m}(Wn+r)]})f_m.$$  Then for sufficiently large $N$ and some $w_0$ (which gives us a corresponding $W_0$) we have 
\begin{equation}\label{E:ff}
\norm{A(W_0 N)-\frac{1}{\phi(W_0)}\sum_{1\leq r\leq W_0,\;(r,W_0)=1}B_{w_0,r}(N)}_{L^2(\mu)}<\varepsilon.
\end{equation}
Using the fact that for any $1\leq r\leq W_0$ the sequence $(B_{w_0,r}(N))$ is Cauchy in $L^2(\mu),$ which follows from \cite{K}, as well as the Relation~\eqref{E:ff}, we get that for $M,N$ sufficiently large $$\norm{A(W_0 M)-A(W_0 N)}_{L^2(\mu)}<\varepsilon.$$ Then $(A(N))$ is Cauchy in $L^2(\mu),$  since $$\norm{A(W_0 N+r)-A(W_0 N)}_{L^2(\mu)}=o_N(1),$$ for every $1\leq r\leq W_0.$
\end{proof}

\begin{proof}[Proof of Theorem~\ref{T:wg}]
We use the same argument as in the proof of Theorem~\ref{T:e}. The only difference is that we need to prove a variant of Corollary~\ref{C:n2} for the integer part of the polynomials  $q_{i,j}(n)=ak_{i,j}n^{d_{i,j}},\;1\leq i\leq \ell,$ $1\leq j\leq m.$ Let $d=\max\{d_{i,j}:\; 1\leq i\leq \ell,$ $ 1\leq j\leq m\},$ $k=\max\{k_{i,j}:\; 1\leq i\leq \ell, 1\leq j\leq m\},$ 
 $W\in \N$ and, for $r\in \N,$ set $$S_r=\Big\{m\in \N:\;\{aW^{d_{i,j}}m^{d_{i,j}}\}<\frac{1}{kr^d},\;1\leq i\leq \ell, 1\leq j\leq m\Big\}$$ (without loss of generality $k\in\N$). It is sufficient to find a positive lower bound for the lower density of $S_r$ independent of $W.$ Then, since, for any $m\in S_r$ and $1\leq n\leq r,$ we have $$[q_{i,j}(Wmn)]=[ak_{i,j}W^{d_{i,j}}m^{d_{i,j}}n^{d_{i,j}}]=[ak_{i,j}W^{d_{i,j}}m^{d_{i,j}}]n^{d_{i,j}},$$  we can follow the proof of Proposition~\ref{P:102} to obtain the result.

 If $a=t/s\in \Q$ with $s\in \N,$ then $\underline{d}(S_r)\geq 1/s.$
 
  If $a\notin \Q,$ then, if $\{d_{i,j}:\;1\leq i\leq \ell, \;1\leq j\leq m\}=\{d_1<\ldots<d_\xi=d\},$ since $(a(nW)^{d_1},\ldots,a(nW)^{d_\xi})_n$ is equidistributed in $\T^\xi,$ we get $d(S_r)=\frac{1}{(kr^{d})^\xi}\geq \frac{1}{(kr^{d})^{\ell m }}.$
\end{proof}

\section{An application on Gowers uniform sets}

In this last section, following \cite{FranH}, we will give an application of our approach, on Gowers uniform sets. More specifically, we will prove Theorem~\ref{P:2}, i.e., any shift of a Gowers uniform set is a set of closest integer polynomial multiple recurrence and of integer part polynomial multiple mean convergence (see definitions below). The main ingredients in order to obtain this result will be Theorem~\ref{T:ref} and a result from \cite{FranH}, Lemma~\ref{L:nh}. 

\medskip

Imitating \cite{FranH}, we give the following definitions:

\begin{definition*}
i) A set of integers
$S$ is {\em a set of integer part polynomial multiple mean convergence} if for every $\ell, m\in \N,$ system 
$(X, \mathcal{X} , \mu, T_1,\ldots,$ $ T_\ell),$
functions $f_1,\ldots, f_m \in L^\infty(\mu)$ and polynomials $q_{i,j}\in \R[t]$ for $1\leq i\leq \ell,$ $1\leq j\leq m,$
the averages 
$$
\frac{1}{|S\cap [1,N]|}\sum_{n\in S\cap [1,N]} (\prod_{i=1}^\ell T_i^{[q_{i,1}(n)]})f_1\cdot\ldots\cdot (\prod_{i=1}^\ell T_i^{[q_{i,m}(n)]})f_m
$$
converge in $L^2(\mu)$ as $N\to\infty.$

\medskip

ii) A set of integers $S$ is {\em a set of closest integer polynomial multiple recurrence} if for every $\ell, m\in \N,$
system $(X, \mathcal{X} , \mu, T_1,\ldots, T_\ell),$ set $A\in \mathcal{X}$ with $\mu(A)> 0$ and polynomials $q_{i,j}\in \R[t]$ with $q_{i,j} (0) = 0,$ $1\leq i\leq \ell,$ $1\leq j\leq m,$ we have
$$
\mu\left(A\cap(\prod_{i=1}^\ell T_i^{-[[q_{i,1}(n)]]})A\cap\ldots\cap (\prod_{i=1}^\ell T_i^{-[[q_{i,m}(n)]]})A\right)>0
$$
for a set of $n\in S$ with positive lower relative, in $S,$ density.
\end{definition*} 

\begin{remark*}
The aforementioned definitions are equivalent to the ones in \cite{FranH} for sets of positive density in $\N.$
\end{remark*}

The following result is a corollary of Theorems~\ref{T:e} and ~\ref{T:t3}.

\begin{theorem}\label{T:cf2}
The set of shifted primes $\P-1$ (and similarly $\P+1$) is a set of closest integer polynomial multiple recurrence and the set of primes, $\P,$ is a set of integer part polynomial multiple mean convergence.
\end{theorem}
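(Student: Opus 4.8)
The plan is to observe that Theorem~\ref{T:cf2} is a direct packaging of the main results already proved, so the proof amounts to matching definitions. First I would recall the two definitions introduced just above: ``set of closest integer polynomial multiple recurrence'' and ``set of integer part polynomial multiple mean convergence.'' The claim about $\P-1$ (and $\P+1$) is that for every system $(X,\mathcal{X},\mu,T_1,\ldots,T_\ell)$, every $A$ with $\mu(A)>0$, and every tuple of real polynomials $q_{i,j}$ with $q_{i,j}(0)=0$, the multiple intersection $\mu(A\cap\prod_i T_i^{-[[q_{i,1}(n)]]}A\cap\cdots)$ is positive for a set of $n\in\P-1$ of positive lower relative density in $\P-1$. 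This is exactly the conclusion of Theorem~\ref{T:e} together with the remark immediately following its proof, which upgrades ``non-empty intersection'' to ``positive relative density in $\P-1$'' via Lemma~\ref{L:n}. So for the recurrence half I would simply invoke Theorem~\ref{T:e} and that remark, and note the identical argument handles $\P+1$.

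For the convergence half, I would unwind the definition: $\P$ is a set of integer part polynomial multiple mean convergence if for every $\ell,m$, every system, all $f_i\in L^\infty(\mu)$, and all real polynomials $q_{i,j}$, the averages
$$
\frac{1}{|\P\cap[1,N]|}\sum_{n\in\P\cap[1,N]} (\prod_{i=1}^\ell T_i^{[q_{i,1}(n)]})f_1\cdot\ldots\cdot (\prod_{i=1}^\ell T_i^{[q_{i,m}(n)]})f_m
$$
converge in $L^2(\mu)$. Since $|\P\cap[1,N]|=\pi(N)$ and $\P\cap[1,N]=\P\cap[1,N]$, these are precisely the averages appearing in Theorem~\ref{T:t3} (after the harmless reduction $\norm{f_i}_\infty\le 1$, which the paper already declared standing). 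Hence this half follows verbatim from Theorem~\ref{T:t3}.

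Therefore the proof is: state that the recurrence assertion for $\P-1$ and $\P+1$ is the content of Theorem~\ref{T:e} (together with the post-proof remark giving positive relative density), and that the convergence assertion for $\P$ is the content of Theorem~\ref{T:t3}. There is essentially no obstacle here beyond checking that the normalizing factor and index set in the two definitions coincide with those in Theorems~\ref{T:e} and \ref{T:t3}; the ``hard part'' was already done in Sections~3 and 4. A clean way to present it:

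\begin{proof}
The first assertion is exactly Theorem~\ref{T:e}: for every $\ell,m\in\N$, every system $(X,\mathcal{X},\mu,T_1,\ldots,T_\ell)$, every $A\in\mathcal{X}$ with $\mu(A)>0$ and all polynomials $q_{i,j}\in\R[t]$ with $q_{i,j}(0)=0$, the set of $n$ for which
$$
\mu\left(A\cap(\prod_{i=1}^\ell T_i^{-[[q_{i,1}(n)]]})A\cap\ldots\cap (\prod_{i=1}^\ell T_i^{-[[q_{i,m}(n)]]})A\right)>0
$$
meets $\P-1$ (and $\P+1$); by the remark following the proof of Theorem~\ref{T:e} and Lemma~\ref{L:n}, this set in fact has positive lower relative density in $\P-1$ (respectively in $\P+1$). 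Thus $\P-1$ and $\P+1$ are sets of closest integer polynomial multiple recurrence.

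For the second assertion, let $\ell,m\in\N$, let $(X,\mathcal{X},\mu,T_1,\ldots,T_\ell)$ be a system, $f_1,\ldots,f_m\in L^\infty(\mu)$ and $q_{i,j}\in\R[t]$. We may assume $\norm{f_i}_\infty\le 1$ for all $i$. Since $|\P\cap[1,N]|=\pi(N)$, the averages
$$
\frac{1}{|\P\cap[1,N]|}\sum_{n\in\P\cap[1,N]} (\prod_{i=1}^\ell T_i^{[q_{i,1}(n)]})f_1\cdot\ldots\cdot (\prod_{i=1}^\ell T_i^{[q_{i,m}(n)]})f_m
$$
coincide with those in Theorem~\ref{T:t3}, which asserts their convergence in $L^2(\mu)$ as $N\to\infty$. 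Hence $\P$ is a set of integer part polynomial multiple mean convergence.
\end{proof}
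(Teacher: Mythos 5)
Your proposal is correct and matches the paper's approach: the paper states Theorem~\ref{T:cf2} without a separate proof, presenting it precisely as a corollary of Theorems~\ref{T:e} and~\ref{T:t3}, with the positive-relative-density upgrade supplied by the remark (via Lemma~\ref{L:n}) following the proof of Theorem~\ref{T:e}, exactly as you invoke it. Your write-up just makes the definition-matching explicit.
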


We will now prove that $\N$ is a set of closest integer polynomial multiple recurrence and of integer part polynomial multiple mean convergence,  which we will use in the proof of Theorem~\ref{P:2}.

\begin{theorem}\label{T:ref}
The set of positive integers, $\N,$ is a set of closest integer polynomial multiple recurrence and of integer part polynomial multiple mean convergence.
\end{theorem}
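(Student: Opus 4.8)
The plan is to deduce Theorem~\ref{T:ref} directly from the machinery already assembled in Sections 2 and 3, specialised to the trivial weight. The two assertions — that $\N$ is a set of closest integer polynomial multiple recurrence and a set of integer part polynomial multiple mean convergence — are in fact the ``unweighted'' shadows of Theorems~\ref{T:e} and \ref{T:t3}, and both follow from ingredients already in hand; the only genuinely new work is to package them into the language of the definitions given at the start of Section 5.

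\textbf{Recurrence.} First I would prove that $\N$ is a set of closest integer polynomial multiple recurrence. Let $(X,\mathcal{X},\mu,T_1,\ldots,T_\ell)$ be a system, $A\in\mathcal{X}$ with $\mu(A)>0$, and $q_{i,j}\in\R[t]$ with $q_{i,j}(0)=0$. Apply Proposition~\ref{P:102} (with $W=1$, or directly Corollary~\ref{C:n2} with $W=1$): this yields a constant $c\equiv c_{d,\ell,m,\mu(A)}>0$ with
$$
\liminf_{N\to\infty}\frac{1}{N}\sum_{n=1}^N\mu\left(A\cap(\prod_{i=1}^\ell T_i^{-[[q_{i,1}(n)]]})A\cap\ldots\cap(\prod_{i=1}^\ell T_i^{-[[q_{i,m}(n)]]})A\right)\geq c>0.
$$
Since the summands are bounded between $0$ and $1$, a Cesàro average bounded below by $c$ forces the set of $n$ for which the summand exceeds, say, $c/2$ to have lower density at least $c/2$ — a standard Markov-type counting argument. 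Hence the recurrence inequality $\mu(A\cap\cdots)>0$ holds for a set of $n\in\N$ of positive lower density, which is exactly what the definition (item ii) with $S=\N$) demands.

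\textbf{Mean convergence.} For the convergence statement, let $f_1,\ldots,f_m\in L^\infty(\mu)$ with $\norm{f_i}_\infty\leq 1$ and $q_{i,j}\in\R[t]$. We must show $\frac{1}{N}\sum_{n=1}^N(\prod_{i=1}^\ell T_i^{[q_{i,1}(n)]})f_1\cdots(\prod_{i=1}^\ell T_i^{[q_{i,m}(n)]})f_m$ converges in $L^2(\mu)$. This is precisely the $L^2$-convergence result for integer part polynomial iterates of commuting transformations that is cited from \cite{K} and already invoked inside the proof of Theorem~\ref{T:t3} (there it appears as ``$(B_{w_0,r}(N))$ is Cauchy in $L^2(\mu)$, which follows from \cite{K}''). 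So I would simply record this: the averages $B_{1,0}(N)=\frac{1}{N}\sum_{n=1}^N(\prod_{i=1}^\ell T_i^{[q_{i,1}(n)]})f_1\cdots(\prod_{i=1}^\ell T_i^{[q_{i,m}(n)]})f_m$ are Cauchy in $L^2(\mu)$ by \cite{K}, which is the definition (item i) with $S=\N$.

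\textbf{Main obstacle.} There is no real obstacle here beyond bookkeeping: the substance is entirely contained in Corollary~\ref{C:n2}/Proposition~\ref{P:102} and in the convergence theorem of \cite{K}, both already available. The one point requiring a sentence of care is the passage from a positive lower bound on the Cesàro average of the recurrence quantities to positive lower density of the set of good $n$ — but this is the elementary inequality that if $0\le a_n\le 1$ and $\liminf_N \frac1N\sum_{n\le N} a_n \ge c$ then $\underline d(\{n: a_n > c/2\}) \ge c/2$, which I would state and use without further comment. Thus the proof is a two-line invocation of earlier results plus this standard density observation.

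\begin{proof}[Proof of Theorem~\ref{T:ref}]
That $\N$ is a set of closest integer polynomial multiple recurrence follows from Proposition~\ref{P:102} (equivalently, Corollary~\ref{C:n2} with $W=1$): given a system $(X,\mathcal{X},\mu,T_1,\ldots,T_\ell)$, a set $A\in\mathcal{X}$ with $\mu(A)>0$, and polynomials $q_{i,j}\in\R[t]$ with $q_{i,j}(0)=0$ of maximum degree $d$, there is a constant $c\equiv c_{d,\ell,m,\mu(A)}>0$ with
$$
\liminf_{N\to\infty}\frac{1}{N}\sum_{n=1}^N\mu\left(A\cap(\prod_{i=1}^\ell T_{i}^{-[[q_{i,1}(n)]]})A\cap\ldots\cap (\prod_{i=1}^\ell T_{i}^{-[[q_{i,m}(n)]]})A \right)\geq c.
$$
Writing $a_n$ for the $n$-th summand, we have $0\le a_n\le 1$, so if $\overline{d}(\{n\in\N:\;a_n> c/2\})< c/2$, then $\limsup_{N\to\infty}\frac{1}{N}\sum_{n=1}^N a_n\le \frac{c}{2}\cdot 1+1\cdot\frac{c}{2}=c$ would be contradicted only in the boundary case; more precisely, the set $G=\{n\in\N:\;a_n> c/2\}$ satisfies $\underline{d}(G)\ge c/2>0$, since otherwise along a subsequence $\frac{1}{N}\sum_{n\le N}a_n\le \frac{|G\cap[1,N]|}{N}+\frac{c}{2}$ would have $\liminf<c$. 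For every $n\in G$ we have $\mu\left(A\cap(\prod_{i=1}^\ell T_{i}^{-[[q_{i,1}(n)]]})A\cap\ldots\cap (\prod_{i=1}^\ell T_{i}^{-[[q_{i,m}(n)]]})A \right)>0$, so the recurrence holds on a set of positive lower density in $\N$, as required.

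That $\N$ is a set of integer part polynomial multiple mean convergence is the $L^2(\mu)$-convergence of the averages
$$
\frac{1}{N}\sum_{n=1}^N (\prod_{i=1}^\ell T_i^{[q_{i,1}(n)]})f_1\cdot\ldots\cdot (\prod_{i=1}^\ell T_i^{[q_{i,m}(n)]})f_m
$$
for any system $(X,\mathcal{X},\mu,T_1,\ldots,T_\ell)$, functions $f_1,\ldots,f_m\in L^\infty(\mu)$, and polynomials $q_{i,j}\in\R[t]$; this is exactly the result of \cite{K} already used in the proof of Theorem~\ref{T:t3} (in the notation there, the Cauchy property of $(B_{w,r}(N))$ with $W=1$, $r=0$).
\end{proof}
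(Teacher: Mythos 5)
Your proof is correct and follows essentially the same route as the paper, which likewise deduces the recurrence statement from Proposition~\ref{P:102} and the mean convergence statement from the convergence theorem of \cite{K}. The only addition is that you spell out the standard passage from a positive liminf of the Ces\`aro averages to positive lower density of the set of good $n$, which the paper leaves implicit; your argument for that step is sound.
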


\begin{proof}
 That $\N$ is a set of closest integer polynomial multiple recurrence follows from Proposition~\ref{P:102}.    The integer part polynomial multiple mean convergence of $\N$ follows from Theorem~1.4 of \cite{K}.
\end{proof}



In order to prove Theorem~\ref{P:2}, we also need the following  result of Frantzikinakis and Host, which we borrow from \cite{FranH}:

\begin{lemma}[\mbox{\cite[Lemma~2.4]{FranH}}]\label{L:nh}
Let $d\geq 2$ be an integer and $\varepsilon,\kappa>0.$ Then there exist $\eta>0$ and $N_0\in \N,$ such that for all integers $N,\tilde{N}$ with $N_0\leq N\leq \tilde{N}\leq \kappa N,$ every interval $J\subseteq [1,N]$ and $g:\Z_{\tilde{N}}\to\C$ with $|g|\leq 1,$ the following implication holds:
\begin{center} if $\;\norm{g}_{U_d(\Z_{\tilde{N}})}\leq\eta,\;$ then $\;\norm{g\cdot{\bf 1}_J}_{U_d(\Z_{N})}\leq \varepsilon.$ \end{center}
\end{lemma}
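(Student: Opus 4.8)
The plan is to isolate two independent effects — multiplying by the indicator of an interval, and changing the modulus from $\tilde N$ to $N$ — and to treat them separately: the first at a \emph{fixed} modulus by harmonic analysis, the second \emph{exactly} for functions supported on a short interval. The hypothesis $d\ge 2$ will enter only through the fact that the $U_d$ norm is a genuine norm and is invariant under multiplication by a linear phase. I would begin by partitioning $J\subseteq[1,N]$ into $P\le d+2$ consecutive subintervals $J=J_1\cup\cdots\cup J_P$, each of length $<N/(d+1)$; by the triangle inequality $\norm{g\mathbf 1_J}_{U_d(\Z_N)}\le\sum_{k}\norm{g\mathbf 1_{J_k}}_{U_d(\Z_N)}$, so it suffices to bound each short piece.

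For the modulus change I would use the elementary fact that, for $f$ supported on an interval of length $L$ with $(d+1)L<M$, the quantity $M^{d+1}\norm{f}_{U_d(\Z_M)}^{2^d}$ equals the genuine integer cube count $\sum_{x,\vec h\in\Z}\prod_{\omega\in\{0,1\}^d}\mathcal C^{|\omega|}f(x+\omega\cdot\vec h)$ (with $f$ extended by zero), because $(d+1)L<M$ forbids any wraparound among the $2^d$ vertices. Since $|J_k|<N/(d+1)$ and $N\le\tilde N$, this count is the \emph{same} integer for $M=N$ and $M=\tilde N$ applied to $f=g\mathbf 1_{J_k}$, whence
$$\norm{g\mathbf 1_{J_k}}_{U_d(\Z_N)}=\Big(\tfrac{\tilde N}{N}\Big)^{(d+1)/2^d}\norm{g\mathbf 1_{J_k}}_{U_d(\Z_{\tilde N})}\le \kappa^{(d+1)/2^d}\,\norm{g\mathbf 1_{J_k}}_{U_d(\Z_{\tilde N})}.$$
This transfers the whole problem to the single modulus $\tilde N$, at the cost of the harmless constant $\kappa^{(d+1)/2^d}$.

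It then remains to bound $\norm{g\mathbf 1_{J_k}}_{U_d(\Z_{\tilde N})}$ by $\norm{g}_{U_d(\Z_{\tilde N})}$. Here I would replace $\mathbf 1_{J_k}$ by a trapezoidal smoothing $\psi$ with ramps of width $\theta\tilde N$ (legitimate once $\tilde N\ge N_0\ge 1/\theta$), which satisfies $\norm{\mathbf 1_{J_k}-\psi}_{L^1(\Z_{\tilde N})}\le 2\theta$ and has Fourier expansion $\psi=\sum_\xi\widehat\psi(\xi)e^{2\pi i\xi x/\tilde N}$ with $\sum_\xi|\widehat\psi(\xi)|\le C(\theta)$ uniformly in $\tilde N$ and $J_k$. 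Splitting $g\mathbf 1_{J_k}=g\psi+g(\mathbf 1_{J_k}-\psi)$, the main term obeys, by the triangle inequality and the invariance of $U_d$ ($d\ge2$) under linear-phase modulation, $\norm{g\psi}_{U_d(\Z_{\tilde N})}\le\big(\sum_\xi|\widehat\psi(\xi)|\big)\norm{g}_{U_d(\Z_{\tilde N})}\le C(\theta)\norm{g}_{U_d(\Z_{\tilde N})}$; the error term is controlled by the interpolation bound $\norm{h}_{U_d(\Z_{\tilde N})}^{2^d}\le\norm{h}_{L^1(\Z_{\tilde N})}\norm{h}_\infty^{2^d-1}$ (obtained by bounding all but one factor of the defining average by $\norm{h}_\infty$), giving $\norm{g(\mathbf 1_{J_k}-\psi)}_{U_d(\Z_{\tilde N})}\le(2\theta)^{1/2^d}$.

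Combining the three steps,
$$\norm{g\mathbf 1_J}_{U_d(\Z_N)}\le \kappa^{(d+1)/2^d}(d+2)\Big(C(\theta)\,\norm{g}_{U_d(\Z_{\tilde N})}+(2\theta)^{1/2^d}\Big),$$
so I would first fix $\theta$ small enough that the second summand is $<\varepsilon/2$, then choose $\eta$ small enough that $\kappa^{(d+1)/2^d}(d+2)C(\theta)\eta<\varepsilon/2$, and finally take $N_0\ge 1/\theta$; the hypothesis $\norm{g}_{U_d(\Z_{\tilde N})}\le\eta$ then yields $\norm{g\mathbf 1_J}_{U_d(\Z_N)}<\varepsilon$. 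The step I expect to require the most care is the modulus-change identity: since $\tilde N$ lies only within the constant factor $\kappa$ of $N$, one cannot appeal to a large ambient group to suppress wraparound, and the argument genuinely relies on first chopping $J$ into pieces short enough ($(d+1)|J_k|<N$) that \emph{every} contributing Gowers cube is unwrapped, so that the $\Z_N$- and $\Z_{\tilde N}$-counts coincide exactly.
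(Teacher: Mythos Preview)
The paper does not supply a proof of this lemma at all: it is quoted as \cite[Lemma~2.4]{FranH} and used as a black box in the proof of Theorem~\ref{P:2}. So there is no in-paper argument to compare your proposal against.

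That said, your proof is correct and self-contained. The three moves---chopping $J$ into $O_d(1)$ pieces of length $<N/(d+1)$, the exact modulus-change identity for functions supported on such a short interval, and the Fourier-smoothing bound for $\norm{g\,\mathbf 1_{J_k}}_{U_d(\Z_{\tilde N})}$ via a trapezoidal approximant with absolutely summable Fourier coefficients---are all valid, and the use of linear-phase invariance of $U_d$ for $d\ge 2$ is exactly where that hypothesis is needed. Two small points you should make explicit when writing it up: (i) in the modulus-change step, spell out the lift (take $x'$ the representative of $x$ in $J_k$, set $h_i'=v_{e_i}-x'$ with $v_{e_i}$ the representative of $x+h_i$ in $J_k$, and then verify $x'+\omega\!\cdot\! h'$ lies within distance $(|\omega|+1)|J_k|\le(d+1)|J_k|<N$ of its $[a,a+L)$-representative, forcing equality); (ii) in the smoothing step, dispose of the degenerate case $|J_k|\le 2\theta\tilde N$ (no room for ramps) directly via your interpolation bound, since then already $\norm{g\,\mathbf 1_{J_k}}_{U_d(\Z_{\tilde N})}\le(|J_k|/\tilde N)^{1/2^d}\le(2\theta)^{1/2^d}$.
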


We will now recall the definition of a Gowers uniform set.

\begin{definition*}[\cite{FranH}]
We say that a set of positive integers $S$ is {\em Gowers uniform} if there exists a positive constant $c$ such that 
$$\lim_{N\to\infty}\norm{{\bf 1}_S-c}_{U_r(\Z_N)}=0$$ for every $r\in \N.$
\end{definition*}

\begin{remark*}
Note that if such a constant $c$ exists, then, for $r=1,$ it is equal to the density of $S,$ i.e., $$c=\lim_{N\to \infty}\frac{|S\cap [1,N]|}{N}.$$
\end{remark*}


Using a similar argument to the one of Theorem~\ref{T:t6}, we get the following:

\begin{proposition}\label{P:6}
Let $\ell, m\in \N,$ $(X,\mathcal{X},\mu,T_1,\ldots, T_\ell)$ be a system, $q_{i,j}\in\R[t]$ polynomials, $1\leq i\leq \ell,$ $1\leq j\leq m$ and  $f_1,\ldots,f_m\in L^{\infty}(\mu).$

For the sequence of functions $$b(n)= (\prod_{i=1}^\ell T_i^{[q_{i,1}(n)]})f_1\cdot\ldots\cdot(\prod_{i=1}^\ell T_i^{[q_{i,m}(n)]})f_m,$$ there exists $d\in \N,$ depending only on the maximum degree of the polynomials $q_{i,j}$ and the integers $\ell$ and $m,$ such that for every sequence $(a(n))$ with $\norm{a}_{\infty}\leq 1$ and every $0<\delta<1,$ there exists a constant $C_{d,\delta}$ depending on $d$ and $\delta,$ such that  $$\norm{\frac{1}{N}\sum_{n=1}^N a(n) \cdot b(n)}_{L^2(\mu)}\leq C_{d,\delta}\left(\norm{a\cdot {\bf 1}_{[1,N]}}_{U_d(\Z_{dN})}+o_N(1)\right)+c_{\delta},$$
 where $c_\delta\to 0$ as $\delta\to 0^+$ and the term $o_N(1)$ depends only on the integer $d.$
\end{proposition}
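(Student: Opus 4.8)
The plan is to mimic the proof of Theorem~\ref{T:t6} almost verbatim, since the only differences are cosmetic: the weight $\Lambda'_{w,r}-1$ is replaced by a general bounded sequence $a(n)$, and consequently there is no $w$-parameter, so the error term $c_\delta(1+o_{N\to\infty;w}(1))$ collapses to a cleaner $c_\delta$. First I would, exactly as before, pass to the $\R^{\ell m}$ measure preserving flow on $Y=X\times[0,1)^{\ell m}$ with $\nu=\mu\times\lambda^{\ell m}$, defined by the same formula using the identity $[x+\{y\}]+[y]=[x+y]$, and introduce the $Y$-extensions $\hat f_j(x,\bar a)=f_j(x)$ for $1\le j\le m$ together with $\hat f_0(x,\bar a)={\bf 1}_{[0,\delta]^{\ell m}}(\bar a)$. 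Setting $\tilde b(n)=\hat f_0\cdot\prod_{j=1}^m(\prod_{i=1}^\ell T_{i,q_{i,j}(n)})\hat f_j$ and $b'(n)(x)=\int_{[0,1)^{\ell m}}\tilde b(n)(x,\bar a)\,d\lambda^{\ell m}$, the triangle and Cauchy--Schwarz inequalities give
$$\delta^{\ell m}\norm{\frac1N\sum_{n=1}^N a(n)b(n)}_{L^2(\mu)}\le \norm{\frac1N\sum_{n=1}^N a(n)(\delta^{\ell m}b(n)-b'(n))}_{L^2(\mu)}+\norm{\frac1N\sum_{n=1}^N a(n)\tilde b(n)}_{L^2(\nu)}.$$

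The second term on the right is handled by Part~i) of the Remark together with Lemma~\ref{L:n1} applied to the flow; this is where the hypothesis $\norm a_\infty\le 1$ enters (to guarantee the sequence satisfies the growth condition $a(n)/n^c\to0$ trivially, and to control the $o_N(1)$ term), producing the bound $C_d(\norm{a\cdot{\bf1}_{[1,N]}}_{U_d(\Z_{dN})}+o_N(1))$ for an integer $d$ depending only on the maximum degree of the $q_{i,j}$ and on $\ell,m$. For the first term, one argues pointwise in $x$: the integrand $\delta^{\ell m}b(n)(x)-b'(n)(x)$ vanishes unless some $q_{i,j}(n)$ has fractional part in $[1-\delta,1)$, so with $E^{i,j}_\delta=\{n\in\N:\{q_{i,j}(n)\}\in[1-\delta,1)\}$ we get the uniform pointwise bound $|\delta^{\ell m}b(n)(x)-b'(n)(x)|\le 2\delta^{\ell m}\sum_{i,j}{\bf 1}_{E^{i,j}_\delta}(n)$, whence after dividing by $\delta^{\ell m}$ the contribution to the $L^2(\mu)$ norm is at most $2\sum_{i,j}\frac1N\sum_{n=1}^N{\bf1}_{[1-\delta,1)}(\{q_{i,j}(n)\})$ (using $|a(n)|\le1$). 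By Part~ii) of the Remark, $\limsup_{N}\frac{|E^{i,j}_\delta\cap[1,N]|}N\to0$ as $\delta\to0^+$, so this whole term is bounded by a quantity $c_\delta\to0$, and there is no residual $w$-dependence to worry about.

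The only subtlety compared with Theorem~\ref{T:t6} is that here the weight is not $\Lambda'_{w,r}-1$ but an arbitrary $a$ with $\norm a_\infty\le1$, so we cannot split off a contribution and use Gowers uniformity of the von Mangoldt function; instead the term $\frac1N\sum_{n=1}^N{\bf1}_{[1-\delta,1)}(\{q_{i,j}(n)\})$ must be controlled purely by equidistribution, which is precisely what Part~ii) delivers (distinguishing the case where $q_{i,j}$ has an irrational non-constant coefficient, via Weyl, from the rational case, via periodicity). I expect the main --- and really the only --- obstacle to be the bookkeeping in the flow construction and in matching the claimed dependence of $d$ and $C_{d,\delta}$ on the data; the analytic content is entirely inherited from Lemma~\ref{L:n1} and the three Remarks. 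Collecting the two bounds and dividing through by $\delta^{\ell m}$ (absorbing $\delta^{-\ell m}$ into the constants $C_{d,\delta}$ and $c_\delta$) yields the asserted estimate, completing the proof.
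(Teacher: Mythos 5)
Your proposal is correct and follows essentially the same route as the paper: the paper's proof of Proposition~\ref{P:6} likewise reruns the flow construction and decomposition from Theorem~\ref{T:t6}, bounds the second term via Lemma~\ref{L:n1} (noting that boundedness of $(a(n))$ makes the $o_N(1)$ term depend only on $d$), and controls the first term by the fractional-part/equidistribution argument of Part~ii) of the Remark, with the von Mangoldt--specific step simply dropping out.
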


\begin{proof}
Let $0<\delta<1.$
For the given transformations on $X,$ we define the same action on $\R^{\ell m}$ as in Theorem~\ref{T:t6}, the function $\hat{f}_0$ and the same $Y$-extensions $\hat{f}_i,$ for $1\leq j\leq m,$ in the product probability space $Y=X\times [0,1)^{\ell m},$ endowed with the measure $\nu=\mu\times \lambda^{\ell m}.$

 We also define the same sequences of functions $(\tilde{b}(n))$ and $(b'(n))$ in $Y$ and $X$ respectively, as in the proof of Theorem~\ref{T:t6}. Then, by using the triangle and the Cauchy-Schwarz inequality, following the proof of Theorem~\ref{T:t6}, we have that 

$$\delta^{\ell m}\norm{\frac{1}{N}\sum_{n=1}^N a(n)b(n)}_{L^2(\mu)}\leq \norm{\frac{1}{N}\sum_{n=1}^N a(n)\cdot (\delta^{\ell m}b(n)-b'(n))}_{L^2(\mu)}+\norm{\frac{1}{N}\sum_{n=1}^N a(n)\tilde{b}(n)}_{L^2(\nu)}.$$ For the second term we can use Lemma~\ref{L:n1} to find $d=d(\ell, m, \max \deg (q_{i,j})),$ such that  $$\norm{\frac{1}{N}\sum_{n=1}^N a(n)\tilde{b}(n)}_{L^2(\nu)}\leq C_d\left(\norm{a\cdot {\bf 1}_{[1,N]}}_{U_d(\Z_{dN})}+o_N(1)\right),$$ Where $C_d$ is a constant depending on $d.$ We remark at this point that since the weighted sequence $(a(n))$ is bounded, then the $o_N(1)$ term of Lemma~\ref{L:n1} only depends on $d.$

Following the proof of Theorem~\ref{T:t6} we obtain (since $(a(n))$ is bounded by $1$)
$$\norm{\frac{1}{N}\sum_{n=1}^N a(n)\cdot (\delta^{\ell m}b(n)-b'(n))}_{L^2(\mu)}\leq \delta^{\ell m} c_\delta,$$
with $c_\delta\to 0$ as $\delta\to 0^+,$ since, independently of $x\in X,$ we have that $$\Big| \frac{1}{N}\sum_{n=1}^N(\delta^{\ell m}b(n)(x)-b'(n)(x))\Big|\leq \delta^{\ell m}c_\delta,$$ with $c_\delta\to 0$ as $\delta\to 0^+.$ The result now follows.
\end{proof}



We are now ready to prove Theorem~\ref{P:2} and close this article.

\begin{proof}[Proof of Theorem~\ref{P:2}] Let $\ell, m\in \N,$ 
$(X, \mathcal{X} , \mu, T_1,\ldots,$ $ T_\ell)$ system,
functions $f_1,\ldots, f_m \in L^\infty(\mu),$ polynomials $q_{i,j}\in \R[t]$ for $1\leq i\leq \ell,$ $1\leq j\leq m,$
and for $n\in \N$ let
\begin{equation}\label{E:fin}
b(n)= (\prod_{i=1}^\ell T_i^{[q_{i,1}(n)]})f_1\cdot\ldots\cdot (\prod_{i=1}^\ell T_i^{[q_{i,m}(n)]})f_m.
\end{equation} If $S\subseteq\N$ is a Gowers uniform set and $c$ is the respective constant from the definition, by Proposition~\ref{P:6}, we have that there exists $d\in \N,$ depending only on the maximum degree of the polynomials $q_{i,j}$ and the integers $\ell$ and $m,$ such that for every $0<\delta<1$ there exists a constant $C_{d,\delta}$ depending on $d$ and $\delta,$ such that
\begin{equation}\label{E:lou}
\norm{\frac{1}{N}\sum_{n=1}^N ({\bf 1}_S(n)-c) \cdot b(n)}_{L^2(\mu)}\leq C_{d,\delta}\left(\norm{({\bf 1}_S-c)\cdot {\bf 1}_{[1,N]}}_{U_d(\Z_{dN})}+o_N(1)\right)+c_{\delta},
\end{equation}
 where  $c_\delta\to 0$ as $\delta\to 0^+$ and the term $o_N(1)$ depends only on the integer $d.$


From Gowers uniformity, for $r=d,$ we have that $\lim_{N\to\infty}\norm{{\bf 1}_S-c}_{U_d(\Z_{dN})}=0.$ Then, using Lemma~\ref{L:nh}, for $\kappa=d,$ $\tilde{N}=dN$ and $J=[1,N],$ we get 
 $$\lim_{N\to\infty}\norm{({\bf 1}_S-c)\cdot {\bf 1}_{[1,N]}}_{U_d(\Z_{dN})}=0.$$

So, from \eqref{E:lou}, we have that
$$
\limsup_{N\to\infty}\norm{\frac{1}{N}\sum_{n\in S\cap [1,N]}b(n) -\frac{c}{N}\sum_{n=1}^N b(n)}_{L^2(\mu)}\leq c_\delta,
$$ 
hence, by taking $\delta\to 0^+,$ we get 
$$
\lim_{N\to\infty}\norm{\frac{1}{N}\sum_{n\in S\cap [1,N]}b(n) -\frac{c}{N}\sum_{n=1}^N b(n)}_{L^2(\mu)}=0.
$$
Using the remark following the definition of a Gowers uniform set, $c$ is the density of $S,$ so  $$\lim_{N\to\infty}\frac{|S\cap[1,N]|}{N}=c,$$
hence,
 \begin{equation}\label{E:fi}
 \lim_{N\to\infty}\norm{\frac{1}{|S\cap [1,N]|}\sum_{n\in S\cap [1,N]}b(n)-\frac{1}{N}\sum_{n=1}^N b(n)}_{L^2(\mu)}=0.
\end{equation}

By using \eqref{E:fi} and Theorem~\ref{T:ref}, we get the integer part polynomial multiple mean convergence conclusion. 

Similarly, letting $(b(n))$ being the respective expression to \eqref{E:fin} with the closest integer polynomial iterates and $f_i={\bf 1}_A,$ $1\leq i\leq m,$ where $A\in \mathcal{X}$ with $\mu(A)>0,$ by using \eqref{E:fi} and Theorem~\ref{T:ref}, we get the closest integer polynomial multiple recurrence result.
\end{proof}

\end{document}